\def\bnu{\bm{\nu}}
\def\tkappa{\tilde{\kappa}}
\title{Optimal approximation of infinite-dimensional holomorphic functions II: recovery from i.i.d.\ pointwise samples}
\author{Ben Adcock\thanks{Department of Mathematics, Simon Fraser University, 8888 University Drive, Burnaby BC, Canada, V5A 1S6.} \and Nick Dexter\thanks{Department of Scientific Computing, Florida State University, 400 Dirac Science Library, Tallahassee, Florida,
USA, 32306-4120.} \and Sebastian Moraga\footnotemark[1]}
\begin{document}

\maketitle

\begin{abstract}
Infinite-dimensional, holomorphic functions have been studied in detail over the last several decades, due to their relevance to parametric differential equations and computational uncertainty quantification. The approximation of such functions from finitely-many samples is of particular interest, due to the practical importance of constructing surrogate models to complex mathematical models of physical processes. In a previous work, \cite{adcock2024optimal} we studied the approximation of so-called Banach-valued, $(\bm{b},\varepsilon)$-holomorphic functions on the infinite-dimensional hypercube $[-1,1]^{\bbN}$ from $m$ (potentially adaptive) samples. In particular, we derived lower bounds for the adaptive $m$-widths for classes of such functions, which showed that certain algebraic rates of the form $m^{1/2-1/p}$ are the best possible regardless of the sampling-recovery pair. In this work, we continue this investigation by focusing on the practical case where the samples are pointwise evaluations drawn identically and independently from the underlying probability measure for the problem. Specifically, for Hilbert-valued $(\bm{b},\varepsilon)$-holomorphic functions, we show that the same rates can be achieved (up to a small polylogarithmic or algebraic factor) for tensor-product Jacobi measures. Our reconstruction maps are based on least squares and compressed sensing procedures using the corresponding orthonormal Jacobi polynomials. In doing so, we strengthen and generalize past work that has derived weaker nonuniform guarantees for the uniform and Chebyshev measures (and corresponding polynomials) only. We also extend various best $s$-term polynomial approximation error bounds to arbitrary Jacobi polynomial expansions. Overall, we demonstrate that i.i.d.\ pointwise samples drawn from an underlying probability measure are near-optimal for the recovery of infinite-dimensional, holomorphic functions.
\end{abstract}

\noindent
\textbf{Keywords:} high-dimensional approximation, holomorphic functions, pointwise samples, information complexity

\pbk
\textbf{Mathematics Subject Classification (2020):} 65D40; 41A10; 41A63; 65Y20; 41A25

\pbk
\textbf{Corresponding author:} {\tt smoragas@sfu.ca}

\section{Introduction}

In many applications in biology, chemistry, economics, engineering and elsewhere,  a mathematical model is represented as the solution of a (system of) parametric Differential Equations (DEs), where the input variable $\bm{y} \in \cU$  belongs to a subset of $\bbR^{\bbN}$ instead of a finite-dimensional space and the output is an element of an infinite-dimensional function space, e.g.,  a Hilbert or Banach space $\cV$. Notably, this is the case in parametric DEs when infinite-dimensional random fields are represented by their Karhunen--Lo\`eve expansions \cite{beck2012optimal,cohen2010convergence,schwab2006karhunen,hansen2013analytic}.

Working directly with such a model is often infeasible, since each model evaluation requires a costly DE solve \cite{sullivan2015introduction,gunzburger2014stochastic}. Therefore, a standard approach is to construct \textit{surrogate model}, i.e., an approximation $\hat{f}$ to the true target function $f: \bm{y} \in \cU \mapsto f(\bm{y})\in \cV$. Since it is often undesirable to alter the existing numerical DE code, one typically strives to do this via a \textit{nonintrusive} method, where the approximation $\hat{f}$ is computed from a collection of pointwise samples of $f$ (also known as \textit{snapshots}). However, this is a challenging task. The number of samples available to produce such approximations is often highly limited, and since the underlying function is infinite dimensional, the so-called \textit{curse of dimensionality} is an ever-present issue.

Fortunately, solution maps $\bm{y} \mapsto f(\bm{y})$ arising from parametric DEs are often smooth functions. A long series of works have shown that solution maps of various different parametric DEs belong to the class of \textit{$(\bm{b},\varepsilon)$-holomorphic} functions (see \cite{cohen2015approximation} and \cite[Chpt.\ 4]{adcock2022sparse} for reviews). Here $\bm{b} \in [0,\infty)^{\bbN}$ is a nonnegative sequence, whose $i$th entry is associated with the smoothness in the $i$th variable, and $\varepsilon > 0$ is a scalar. Because of the relevance of this class to surrogate models, it is crucial to understand the fundamental limits of how well one can approximate such functions from finitely-many samples. In our previous work \cite{adcock2024optimal}, we derived lower bounds for the (adaptive) $m$-widths for classes of $(\bm{b},\varepsilon)$-holomorphic functions. In other words, we examined how well one can approximate such functions using $m$ arbitrary (adaptive) linear measurements and arbitrary reconstruction maps. Our results considered the case where $\bm{b}$ (or its monotone majorant) is $\ell^p$-summable for some $0 < p < 1$, this being a standard assumption in the above literature. In particular, we showed that, when the $m$-width decays, it does so with rate no faster than the algebraic power $m^{1/2 - 1/p}$.

Unfortunately, \cite{adcock2024optimal}  does not directly apply to the surrogate model construction problem, where the measurements are pointwise samples of the target function.
The goal of this paper is to continue the investigations of \cite{adcock2024optimal} by focusing on the case of pointwise samples (also known as \textit{standard information} \cite{novak2008trac,novak2010trac}). We show that these lower bounds can be achieved (up to either logarithmic or arbitrarily small algebraic factors) from i.i.d.\ pointwise samples (also known as \textit{random information}) drawn from arbitrary Jacobi measures. In other words, random information constitutes nearly optimal information for recovering $(\bm{b},\varepsilon)$-holomorphic functions. We also derive explicit reconstruction procedures that attain these rates, thus demonstrating practical schemes for surrogate model construction from pointwise samples.

\subsection{Contributions and related work}\label{S:contr}

In this work, we study the approximation of $(\bm{b},\varepsilon)$-holomorphic functions $f: \cU \rightarrow \cV$, where $\cU=[-1,1]^{\bbN}$ and $\cV$ is a Hilbert space. See \S \ref{S:spaces} for the full definition. 
As discussed in \cite{adcock2025near,adcock2022sparse,adcock2024optimal}, $\bm{b}$ controls the \textit{anisotropy} of such a function. The size of the entries of $\bm{b}$ relate to the smoothness of $f$ with respect to each of its variables. Following our previous work \cite{adcock2024optimal}, we consider two scenarios. First, the \textit{known anisotropy} (known $\bm{b}$) case, where the reconstruction map is allowed to depend on $\bm{b}$. Second, the more challenging \textit{unknown anisotropy} (unknown $\bm{b}$) case, where the reconstruction map is required to be independent of $\bm{b}$. For samples, we focus on pointwise samples that are drawn i.i.d.\ from tensor-product Jacobi probability measures. Besides a mild boundedness assumption (see Assumption \ref{main-ass}), these can be arbitrary.

In each of the known and unknown anisotropy cases, we derive reconstruction maps that obtain algebraic convergence rates
of the form 
\begin{equation*}
\cO((m/\text{polylog}(m))^{1/2-1/p})
\end{equation*}
in expectation and probability, uniformly for functions in the given class, for any fixed $\bm{b} \in \ell^p(\bbN)$ (known anisotropy) or $\bm{b} \in \ell^p_{\mathsf{M}}(\bbN)$ (unknown anisotropy). Here $\ell^p_{\mathsf{M}}(\bbN)$ is the \textit{monotone} $\ell^p$ space, which consists of sequences whose minimal monotone majorants are $\ell^p$-summable (see \S \ref{S:spaces}). We also show that these sampling-reconstruction procedures lead to algebraic convergence rates that are uniform in $\bm{b}$ of the form
\begin{equation*}
\cO(m^{1/2-1/t})
\end{equation*}
for any $p < t < 1$, with a constant depending on $t$. These upper bounds are close to the lower bounds for the $m$-widths given in our previous work \cite{adcock2024optimal} (see also \S \ref{S:anisotropy}). Therefore, we conclude that near-optimal approximation rates can be obtained from i.i.d.\ pointwise samples, and provide explicit methods for doing so.

Recently, various works have studied the approximation of $(\bm{b},\varepsilon)$-holomorphic functions from pointwise samples, with similar algebraic rates being established for polynomial-based methods or, recently, deep neural networks \cite{adcock2025near,adcock2024efficient,adcock2021deep,adcock2022sparse,herrman2022constructive,dung2022collocation,herrmann2024neural}. In particular, \cite{adcock2022sparse} describes approximation of such functions from pointwise samples in the scalar-valued case via Chebyshev and Legendre polynomials using least-squares and compressed sensing techniques. The work \cite{adcock2024efficient} extended this to the case of Hilbert-valued functions and showed that the resulting compressed sensing methods can be efficiently implemented in subexponential time. In \cite{adcock2021deep,adcock2025near}, by emulating polynomial-based techniques as deep neural network training procedures, it was shown that the same rates approximation rates can be achieved using deep learning. Moreover, \cite{adcock2025near} also extended these previous results from Hilbert- to Banach-valued functions.
The main distinguishing features in this work are:
\begin{enumerate}[(i)]
\item We consider (essentially) arbitrary Jacobi measures, while most previous works consider only specific measures (e.g., the uniform or Chebyshev/arcsine measure).

\item In doing so, we provide generalizations of various best polynomial approximation error decay rates for $(\bm{b},\varepsilon)$-holomorphic functions from tensor-product Chebyshev or Legendre polynomials considered in previous works to arbitrary tensor-product Jacobi polynomials.

\item We derive error bounds in probability and in expectation.

\item We establish \textit{uniform} upper bounds, i.e., bounds that hold for any function belonging to the given function class.

\item Since our reconstruction maps are based on least squares (known anisotropy) or compressed sensing (unknown anisotropy), we extend these procedures and their analysis from tensor-product Chebyshev or Legendre polynomials considered in previous works to arbitrary tensor-product Jacobi polynomials.

\item We combine our results with the lower bounds of \cite{adcock2024optimal} to assert  near-optimality of the corresponding sampling-recovery procedures.
\end{enumerate} 
More precisely, in the known anisotropy case, we combine three main arguments. First, standard least-squares analysis \cite{cohen2013stability}. Second, new bounds for certain weighted best approximation errors for Jacobi polynomial expansions. Third, a particular nested index set construction based on ideas of \cite{krieg2021function,krieg2021functionII} to achieve the desired rate.
In the unknown anisotropy case, we construct polynomial approximations via compressed sensing and weighted $\ell^1$-minimization. Our technical arguments are based on \cite{adcock2024efficient}. However, we modify these to derive stronger uniform error guarantees, and extend them from tensor-product Chebyshev and Legendre polynomials to arbitrary tensor-product Jacobi polynomials and i.i.d.\ sampling from the corresponding Jacobi measures. While Chebyshev and Legendre polynomials are, arguably, the most commonly used in practice, the generalization to Jacobi polynomials is not only mathematically interesting, it also has practical relevance to computational uncertainty quantification where the parameters follow beta distributions \cite{smith2013uncertainty}.

\subsection{Outline}
The remainder of this paper proceeds as follows. We commence in \S\ref{S:prelim} with various preliminaries. Next, in \S\ref{S:anisotropy} we recap the relevant prior results from \cite{adcock2024optimal} and in \S\ref{S:main_results} we present our main results. In \S\ref{S:Jacobi_pol} we provide several novel best approximation error decay rates for tensor-product Jacobi polynomials. Then in \S \ref{S:proofs_prob}--\ref{S:proofs_main} we prove our main results.  We also include three appendices, which provide additional technical results used in the paper and a table of notation.

\section{Preliminaries}\label{S:prelim}

This section introduces further notation, definitions and other preliminaries that are needed later.  

\subsection{Notation}
Let $\bbN$ and $\bbN_0$ be the sets of positive and nonnegative integers, respectively. We write $[N]= \lbrace 1,2,\ldots, N \rbrace$ for $N \in \bbN \cup \{ \infty \}$, with the convention that $[N]=\bbN$ if $N=\infty$. Let $\bbR^N$ be the vector space of real vectors of length $N \in \bbN$ and $\bbR^{\bbN}$ be the vector space of real sequences indexed over $\bbN$. In either space, we write $\bm{e}_j=(\delta_{j,k})_{k \in [N]}$ for the standard basis vectors, where $j \in \bbN$ or $j \in [N]$. 

For $N \in \bbN \cup \{ \infty \}$, we write $\bm{0} \in \bbN^N_0$ and $\bm{1} \in \bbN^{N}_{0}$ for the multi-indices consisting of all zeros and all  ones, respectively. For any multi-indices $\bm{\nu}=(\nu_i)_{i \in [N]}$ and $\bm{\mu}=(\mu_i)_{i \in [N]}$, the inequality $\bm{\nu} \leq \bm{\mu}$ (or $\bm{\nu} < \bm{\mu}$) is understood componentwise, i.e., $\bm{\nu} \leq \bm{\mu}$  if ${\nu_j} \leq {\mu_j}$ for all $j \in [N]$.   We also write 
\begin{equation*}
 \bnu^{\bm{\mu}} = \prod_{i \in [N]} \nu_i^{\mu_i}, \quad \text{ and } \quad  \bnu ! = \prod_{i \in [N]} \nu_i!,
 \end{equation*} 
with the convention that $0^0=1$.

For $1 \leq p \leq \infty$ $(0<p<1)$, we write $\nms{\cdot}_{p}$ for the usual vector $\ell^p$-norm (quasi-norm) on $\bbR^{\bbN}$,
and $\ell^p(\bbN)$ for the space of sequences for which $\|\bm{c}\|_{p}<\infty$ (see \S \ref{S:ellspaces} for a more general definition).
Given a set $S \subseteq [N]$ and $\bm{c} = (c_i)_{i \in [N]} \in \bbR^{N}$, we write $\bm{c}_S$ for the vector with $i$th entry equal to $c_i$ if $i \in S$ and zero otherwise. 
We also write  $S^c$ for the set complement $[N] \setminus S$ of $S$. 

Let $\bm{y} \in \cU$ denote the variable in $\cU = [-1,1]^{\bbN}$ and $\varrho$ be a probability measure on $\cU$. Let $(\cV,\ip{\cdot}{\cdot}_{\cV})$ be a Hilbert space with induced norm $\nms{\cdot}_{\cV}$. 
For $1 \leq p \leq \infty$, we define the weighted Lebesgue-Bochner space $L^p_{\varrho}(\cU;\cV)$ as the space consisting of (equivalence classes of) strongly $\varrho$-measurable functions $f: \cU \rightarrow \cV$ for which $\|f\|_{L^p_{\varrho}(\cU;\cV)}<\infty$, where
 \begin{equation}\label{def:normF}
\| f \|_{L^p_{\varrho}(\cU;\cV)} : = 
\begin{cases} 
\left( \int_{\cU} \nm{f( \bm{y} )}_{\cV}^p \D \varrho (\bm{y}) \right)^{1/p} & 1 \leq p < \infty ,
\\
\mathrm{ess} \sup_{\bm{y} \in \cU} \nm{f(\bm{y})}_{\cV}  & p = \infty.
\end{cases}
\end{equation}
For simplicity we write $L^{p}_{\varrho}(\cU)$ when $\cV=\bbR$. 

Finally, we write $C(\cdot)$ for a constant that only depends on its arguments, and which is allowed to vary from one line to the next. On occasion, we write $\widetilde{C}$ or $C_1$, $C_2$ and so forth to distinguish specific constants. We also write lower-case $c$ for a universal constant (i.e., a constant that is independent of any variables).

\subsection{The space of $(\pmb{b},\varepsilon)$-holomorphic functions} \label{S:spaces}

We consider the approximation of infinite-dimensional, Hilbert-valued functions of the form
\begin{equation}\label{f-def}
f : \cU  \rightarrow \cV,\ \bm{y} \mapsto f(\bm{y}).
\end{equation}
We assume such functions are holomorphic, in a sense we now make precise.
For a parameter  $\bm{\rho} = (\rho_j)_{j \in \bbN} \geq \bm{1}$ we define the Bernstein  polyellipse as
\begin{equation*}
\cE_{\bm{\rho}} = \cE_{\rho_1} \times \cE_{\rho_2} \times \cdots \subset \bbC^{\bbN},
\end{equation*}
where, for $\rho > 1$, $\cE_{\rho} = \{ \frac12 (z+z^{-1}) : z \in \bbC,\ 1 \leq | z | \leq \rho \} \subset \bbC$ is the classical Bernstein ellipse with foci $\pm 1$ and, by convention, $\cE_{\rho} = [-1,1]$ when $\rho = 1$.  
Now let $\bm{b} = (b_i)_{i\in\bbN} \in [0,\infty)^{\bbN} $ and $\varepsilon > 0$. We say that \ef{f-def} is \textit{$(\bm{b},\varepsilon)$-holomorphic} if it is holomorphic in the region
\begin{equation}
\label{def:b-eps-holo}
\cR(\bm{b},\varepsilon) = \bigcup \left\lbrace  \cE_{\bm{\rho}} : \bm{\rho} \in [1,\infty)^{\bbN},  \sum^{\infty}_{i=1} \left ( \frac{\rho_i + \rho^{-1}_i}{2} - 1 \right ) b_i \leq \varepsilon  \right\rbrace \subset \bbC^{\bbN}.
\end{equation}
As mentioned in \cite{adcock2024optimal}, by rescaling $\bm{b}$ we may choose  $\varepsilon=1$.
Thus, in this paper we consider $(\bm{b},1)$-holomorphic   functions, write $\cR(\bm{b})$ instead of $\cR(\bm{b},1)$ and define the set
\begin{equation}
\cH(\bm{b}) = \left \{ f : \cU \rightarrow \cV\text{ $(\bm{b},1)$-holomorphic} : \nm{f}_{L^{\infty}(\cR(\bm{b});\cV)} : = \esssup_{\bm{z} \in \cR(\bm{b})} \nm{f(\bm{z})}_{\cV}  \leq 1 \right \}.
\end{equation}
 See \cite{adcock2022sparse,chkifa2015breaking,schwab2019deep} for further discussion on 
$(\bm{b},\varepsilon)$-holomorphic functions.

We define the \textit{minimal monotone majorant} of a sequence $\bm{z}=(z_i)_{i \in \bbN} \in \ell^{\infty}(\bbN)$ as
\begin{equation}
\label{min-mon-maj}
\tilde{\bm{z}} = (\tilde{z}_i)_{i \in \bbN},\quad \text{where }
\tilde{z}_i = \sup_{j \geq i} | z_{j}|,\ \forall i \in \bbN.
\end{equation}
Given $0 < p < \infty$, we define the \textit{monotone $\ell^p$-space} $\ell^p_{\mathsf{M}}(\bbN)$ by
\begin{equation*}
\ell^p_{\mathsf{M}}(\bbN) = \{ \bm{z} \in \ell^{\infty}(\bbN) : \nm{\bm{z}}_{p,\mathsf{M}} : = \nm{\tilde{\bm{z}}}_{p} < \infty  \}.
\end{equation*}
For reasons we explain in more detail in the next section, we are particularly interested in the case where $\bm{b} \in \ell^p(\bbN)$ or $\bm{b} \in \ell^p_{\mathsf{M}}(\bbN)$ for some $0 < p < 1$. 
Accordingly, we also define the function classes
\begin{equation*}
\cH(p) =  \bigcup \left \{ \cH(\bm{b}) : \bm{b} \in \ell^p(\bbN),\ \bm{b} \in [0,\infty)^{\bbN},\   \nm{\bm{b}}_p \leq 1 \right \}
\end{equation*}
and
\begin{equation*}
\cH(p,{\mathsf{M}})  = \bigcup  \left \{ \cH(\bm{b}) : \bm{b} \in \ell^p_{\mathsf{M}}(\bbN),\ \bm{b} \in [0,\infty)^{\bbN},\  \nm{\bm{b}}_{p,\mathsf{M}} \leq 1 \right \}  .
\end{equation*}
Note that it is a short argument to show that $\cH(p,\mathsf{M})$ is, in fact, equivalent to the union over all $\bm{b} \in \ell^p(\bbN)$ that are monotonically decreasing.

\section{ $m$-widths of $(\text{\textit{\textbf{b}}},1)$-holomorphic functions: the results of \cite{adcock2024optimal}}\label{S:anisotropy}

Let $\cK$ be a set of (holomorphic) functions. To understand how well we can approximate functions in $\cK$ from finitely-many samples, we use the theory of $m$-widths (see, e.g., \cite{pinkus1968n-widths}). Recall that $\cV$  is a  Hilbert space, $\varrho$ is a tensor-product probability measure on $\cU$  and consider  the Lebesgue--Bochner space $\cX = L^2_{\varrho}(\cU ; \cV)$. Let $\cY = C(\cU;  \cV)$ be the normed vector space of continuous functions from $\cU$ to $\cV$ with respect to the uniform norm. Then, following \cite{adcock2024optimal}, we define the \textit{(adaptive) $m$-width} 
\begin{equation}
\label{Theta_m-def}
\Theta_m(\cK ; \cY,\cX) =  \inf \left \{ \sup_{f \in \cK} \nm{f - \cT ( \cL(f)) }_{\cX} : \cL : \cY \rightarrow \cV^m\text{ adaptive},\ \cT : \cV^m \rightarrow \cX \right \}.
\end{equation}
Here $\cL$ is an \textit{adaptive sampling operator} (note that this definition includes bounded linear operators) and $\cT : \cV^m \rightarrow \cX$ is an arbitrary (potentially nonlinear) reconstruction map.

Now consider  $(\bm{b},\varepsilon)$-holomorphic functions.  
In \cite{adcock2024optimal}, we made the distinction between the \textit{known anisotropy} (known $\bm{b}$) and \textit{unknown anisotropy} (unknown $\bm{b}$) cases. In the known anisotropy case, we now let $\cK = \cH(\bm{b})$ in \ef{Theta_m-def} for some fixed $\bm{b}$, so that the reconstruction map $\cT$ can depend on $\bm{b}$. Conversely, in the case of unknown anisotropy we let $\cK = \cH(p)$ or $\cK = \cH(p,\mathsf{M})$ in \ef{Theta_m-def}, in which case the reconstruction map is prohibited from depending on $\bm{b}$.

Keeping this in mind, for succinctness, when $\bm{b}$ is known, we write
\begin{equation}
\label{theta-m-b}
 \theta_m(\bm{b}) = \Theta_m(\cH(\bm{b}) ; \cY,\cX),
\end{equation}
and, given $0 < p < 1$, 
 \begin{equation}\label{eq:def_theta} 
\begin{split}
\overline{\theta_m}(p) &= \sup \left \{ \theta_m(\bm{b}) : \bm{b} \in \ell^p(\bbN),\ \bm{b} \in [0,\infty)^{\bbN},\  \nm{\bm{b}}_p \leq 1 \right \},
\\
\overline{\theta_m}(p,{\mathsf{M}}) &= \sup \left \{ \theta_m(\bm{b}) : \bm{b} \in \ell^p_{\mathsf{M}}(\bbN),\ \bm{b} \in [0,\infty)^{\bbN},\  \nm{\bm{b}}_{p,\mathsf{M}} \leq 1 \right \}.
\end{split}
\end{equation} 
In the unknown anisotropy case, we write
 \begin{equation}\label{theta-upsilon-unknown-aniso}
\theta_m(p) = \Theta_m(\cH(p) ; \cY,\cX), \quad 
\theta_m(p,{\mathsf{M}}) = \Theta_m(\cH(p,{\mathsf{M}}) ; \cY,\cX ).
 \end{equation}
 We now recap the main results of \cite{adcock2024optimal}. These results only considered the case where the probability measure $\varrho = \varrho_1 \times \varrho_1 \times \cdots$ is a tensor-product of a single probability measure on $[-1,1]$ (the existence of this measure is guaranteed by the Kolmogorov extension theorem \cite[Thm.\ 14.36]{klenke2013probability}).
However, they can be easily generalized to tensor-products of different one-dimensional measures, subject to the following assumption (see Remark \ref{rem:how-to-extend}).

\begin{assumption}
\label{measure-ass}
The probability measure $\varrho$ on $\cU$ has the form $\varrho = \varrho_1 \times \varrho_2 \times \cdots$, where each $\varrho_i$ is a probability measure on $[-1,1]$. Moreover, $\inf_{i} \sigma_i > 0$, where the $\sigma_i$ are given by
 \bes{
\sigma_i = \sqrt{\int^{1}_{-1} (y-\upsilon_i)^2 \D \varrho_i(y) },\quad \text{for } \upsilon_i = \int^{1}_{-1} y \D \varrho_i(y).
}
\end{assumption} 
 
With this in hand, the main results \cite[Thms.\ 4.4 \& 4.5]{adcock2024optimal} can be summarized as follows.
\begin{theorem}[Lower bounds]\label{t:partI}
Let $m\geq 1$, $\varrho$ satisfy Assumption \ref{measure-ass} and $0<p<1$. Then the following hold.
\begin{itemize}
\item  The $m$-widths  $\overline{\theta_m}(p)$, $\overline{\theta_m}(p,\mathsf{M})$ in \eqref{eq:def_theta} satisfy
\begin{equation}\label{known-lower}
\overline{\theta_m}(p) \geq \overline{\theta_m}(p,\mathsf{M}) \geq C(\varrho) \cdot 2^{-\frac1p} \cdot m^{\frac12-\frac1p}.
\end{equation}

\item  The $m$-width $\theta_m(p)$ in \eqref{theta-upsilon-unknown-aniso} satisfies
\begin{equation} 
\label{unknown-lower-1}
\theta_m(p) \geq C(\varrho) \cdot 2^{\frac12-\frac2p}.  
\end{equation}
\item The $m$-width $\theta_m(p,\mathsf{M})$ in \eqref{theta-upsilon-unknown-aniso} satisfies
\begin{equation}
\label{unknown-lower-2}
\theta_m(p,\mathsf{M}) \geq \overline{\theta_m}(p,\mathsf{M}) \geq C(\varrho) \cdot 2^{-\frac1p} \cdot m^{\frac12-\frac1p}.
\end{equation}
\end{itemize}
\end{theorem}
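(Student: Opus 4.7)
My strategy for all three bounds is an adversarial construction: given any sampling-recovery pair $(\cL,\cT)$ with $\cL:\cY\to\cV^m$ adaptive, I would exhibit $f_+,f_-$ in the relevant class $\cK$ with $\cL(f_+)=\cL(f_-)$, so that
\begin{equation*}
\max_{\pm}\|f_\pm-\cT(\cL(f_\pm))\|_\cX\geq\tfrac12\|f_+-f_-\|_\cX.
\end{equation*}
Since $\cH(\bm{b})$, $\cH(p)$, and $\cH(p,\mathsf{M})$ are convex and centrally symmetric, this reduces (up to a universal factor) to lower bounding a Gelfand-type diameter: the $\cX$-diameter of $\cK$ intersected with the kernel of an essentially linear operator of rank at most $m$, a finite-dimensional linear-algebraic quantity even when $\cT$ is nonlinear. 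The trivial inequalities $\overline{\theta_m}(p)\geq\overline{\theta_m}(p,\mathsf{M})$ and $\theta_m(p,\mathsf{M})\geq\overline{\theta_m}(p,\mathsf{M})$ follow directly from the set inclusions $\{\bm{b}:\|\bm{b}\|_{p,\mathsf{M}}\leq 1\}\subset\{\bm{b}:\|\bm{b}\|_p\leq 1\}$ and $\cH(\bm{b})\subset\cH(p,\mathsf{M})$, respectively.

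For the main inequality \eqref{known-lower}, the plan is to embed a finite-dimensional cube into $\cH(\bm{b})$ for a tailored $\bm{b}$. I would set $N=\lceil 2m\rceil$ and take $\bm{b}=N^{-1/p}\bm{1}_{[N]}$, which is monotone with $\|\bm{b}\|_{p,\mathsf{M}}=1$ and produces the $2^{-1/p}$ prefactor through $N^{-1/p}=2^{-1/p}m^{-1/p}$. Letting $\{\Psi_{\bm\nu}\}$ be the $L^2_\varrho$-orthonormal polynomials associated to $\varrho$, I consider
\begin{equation*}
f_{\bm{c}}(\bm{y})=\alpha\sum_{j=1}^N c_j\,\Psi_{\bm{e}_j}(\bm{y}),\qquad \bm{c}\in[-1,1]^N.
\end{equation*}
Choosing $\bm{\rho}$ with $\rho_i=\rho^{*}$ for $i\leq N$ and $\rho_i=1$ otherwise in \eqref{def:b-eps-holo} forces $(\rho^{*}+(\rho^{*})^{-1})/2-1\leq N^{1/p-1}$; a Bernstein-type bound on $\sup_{\bm{z}\in\cE_{\bm{\rho}}}|\Psi_{\bm{e}_j}(\bm{z})|$ in terms of $\rho^{*}$ then fixes $\alpha$ of order $N^{-1/p}$ so that $\{f_{\bm{c}}:\|\bm{c}\|_\infty\leq 1\}\subset\cH(\bm{b})$. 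Orthonormality gives $\|f_{\bm{c}}\|_\cX=\alpha\|\bm{c}\|_2$, reducing the task to the Kashin--Garnaev--Gluskin-type estimate $\mathrm{diam}_{\ell^2}(B^N_\infty\cap H)\geq c\sqrt{N-m}$ for every subspace $H\subset\bbR^N$ of codimension at most $m$. Assembling the pieces yields $\overline{\theta_m}(p,\mathsf{M})\geq c\,\alpha\sqrt{m}\geq C(\varrho)\cdot 2^{-1/p}\cdot m^{1/2-1/p}$, with $C(\varrho)$ absorbing $\inf_i\sigma_i>0$ from Assumption~\ref{measure-ass}, which normalises $\Psi_{\bm{e}_i}$ and hence controls its sup-norm on the polyellipse.

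For \eqref{unknown-lower-1}, the decisive point is that without monotonicity the $\ell^p$-ball contains $2^{-1/p}\bm{e}_j$ for \emph{every} $j\in\bbN$. Hence $\cH(p)$ contains the family of univariate test bumps $g_j(\bm{y})=\alpha\,\Psi_{\bm{e}_j}(\bm{y})$, one per coordinate, each with $\|g_j\|_\cX$ bounded below independently of $j$. For any linear $\cL$ of rank $m$ acting on the infinite-dimensional span of $\{g_j\}_{j\in\bbN}$, the kernel is infinite-dimensional; selecting $g=\sum c_j g_j$ with $\cL(g)=0$ and $\bm{c}$ sparse enough (e.g.\ supported on two indices, contributing the $2^{1/2-2/p}$ constant after renormalisation into $\cH(p)$) gives an element of $\cK\cap\ker\cL$ with the claimed $\cX$-norm, and the reconstruction cannot separate $g$ from $-g$.

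\textbf{Main obstacle.} The principal technical difficulty is justifying the adaptive-to-nonadaptive reduction rigorously on an \emph{infinite-dimensional} domain with an arbitrary adaptive $\cL$ and nonlinear $\cT$: although the test subspace is finite-dimensional, one must argue that the restriction of $\cL$ to it behaves as a finite-rank linear map along a fixed deterministic branch of its adaptive query tree (a measurable selection / continuity argument). A secondary difficulty is the sharp sup-norm estimate of tensorised normalised Jacobi polynomials on anisotropic Bernstein polyellipses, uniformly across Jacobi parameters; this is precisely where $C(\varrho)$ and Assumption~\ref{measure-ass} enter essentially.
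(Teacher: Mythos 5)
The first thing to note is that this paper does not actually prove Theorem \ref{t:partI}: it is a restatement of Theorems 4.4 and 4.5 of \cite{adcock2024optimal}, and the only proof content supplied here is Remark \ref{rem:how-to-extend}, which explains how those proofs extend to product measures satisfying Assumption \ref{measure-ass} via the orthonormal degree-one functions $\psi_i(\bm{y}) = (y_i-\upsilon_i)/\sigma_i$. Your from-scratch sketch is therefore a different route from what appears in this paper, although for \eqref{known-lower} and \eqref{unknown-lower-2} it plausibly parallels the cited argument: your test family is exactly a span of those $\psi_i$ (only degree-one polynomials are needed, so no Jacobi sup-norm estimates and no Assumption \ref{main-ass} enter — your ``secondary difficulty'' is largely illusory), and the choices $N=2m$, $\bm{b}=N^{-1/p}\bm{1}_{[N]}$, amplitude $\alpha \sim N^{-1/p}$, combined with the elementary fact that any codimension-$m$ section of $B^N_\infty$ contains a point of $\ell^2$-norm at least $\sqrt{N-m}$ (this is not a Kashin--Garnaev--Gluskin estimate), reproduce the constant structure $2^{-1/p}m^{1/2-1/p}$. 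The trivial inequalities you list are fine.

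There are, however, two genuine gaps. First, your reduction assumes that the restriction of $\cL$ to the $N$-dimensional test family has kernel of codimension at most $m$ (``rank at most $m$''). That is automatic when each measurement is a point evaluation, or more generally a scalar functional applied to the $\cV$-valued function, but it fails for an arbitrary bounded linear map into $\cV^m$ when $\cV$ is infinite dimensional: a single $\cV$-valued measurement can be injective on the test cube (indeed on all of $L^2_{\varrho}(\cU;\cV)$), which would make the width zero. So the argument hinges on the precise definition of ``adaptive sampling operator'' in \cite{adcock2024optimal}; the adaptivity itself is the lesser issue (the standard zero-information-path argument of Gal--Micchelli type handles it, no measurable-selection machinery needed), whereas the $\cV$-valuedness of each measurement is what you must pin down. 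Second, and more seriously, your argument for \eqref{unknown-lower-1} does not work as described: the restriction of the $m$ zero-path functionals to $\mathrm{span}\{\Psi_{\bm{e}_j} : j \in \bbN\}$ produces infinitely many columns in $\bbR^m$, which can be pairwise linearly independent, so the kernel need contain no vector ``supported on two indices''; the guaranteed sparsity is only $m+1$, which yields a bound decaying like $m^{1/2-1/p}$ rather than the $m$-independent constant $2^{1/2-2/p}$. The second bullet therefore requires a different construction (in \cite{adcock2024optimal} it exploits that the important variables of functions in $\cH(p)$ can occur in infinitely many configurations). A minor further inaccuracy: $\cH(p)$ and $\cH(p,\mathsf{M})$ are not convex — they are unions of the convex sets $\cH(\bm{b})$ — though only central symmetry is actually used.
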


The bound \ef{known-lower} pertains to the known anisotropy case. It shows that no sampling-recovery pair can achieve a rate better than $m^{1/2-1/p}$ for $\bm{b} \in \ell^p(\bbN)$ or $\bm{b} \in \ell^p_{\mathsf{M}}(\bbN)$ when the reconstruction map is allowed to depend on $\bm{b}$. For the unknown anisotropy case, \ef{unknown-lower-1}  shows that it is impossible to approximate functions in $\cH(p)$ from finitely-many samples. This arises because functions in $\cH(p)$ can have important variables in infinitely-many different configurations \cite{adcock2024optimal}.
 The final result \ef{unknown-lower-2} asserts that when restricting to the class $\cH(p,\mathsf{M})$ one can achieve a rate of at best $m^{1/2-1/p}$.
 
We remark that \cite[Thms.~4.6 \& 4.7]{adcock2024optimal} also establish upper bounds for several of the $m$-widths considered above when $\varrho$ is the uniform measure. However, these bounds do not consider i.i.d. pointwise samples and are therefore not relevant to this work. In the next section, we present counterpart upper bounds based on such samples. In particular, we show that the rates $m^{1/2-1/p}$ can be `nearly' achieved from i.i.d.\ pointwise samples drawn from general tensor-product Jacobi measures, using suitable recovery maps based on the corresponding tensor Jacobi polynomials. 

\rem{
\label{rem:how-to-extend}
The extension of the results of \cite{adcock2024optimal} to the case where $\varrho$ satisfies Assumption \ref{measure-ass} involves a straightforward modification of several proofs. First, one defines the orthonormal (in $L^2_{\varrho}(\cU)$) functions $\psi_i(\bm{y}) = \frac{y_i - \upsilon_i}{\sigma_i}$ and then observes that \cite[Lem.\ 5.2]{adcock2024optimal} remains valid with $\sigma = \inf_i \sigma_i$ and $\tau = \sup_i |\upsilon_i|$. Observe that $|\tau| \leq 1$ since each $\varrho_i$ is a probability measure supported on $[-1,1]$. Lemmas 5.3 and 5.4 of \cite{adcock2024optimal} then extend without further modifications and with these values of $\sigma$ and $\tau$. Finally, the same applies to Theorems 4.4 and 4.5 of \cite{adcock2024optimal}, on which \cf{t:partI} is based.
}

\rem{
\label{rem:theta_bar_equate}
In general, $\overline{\theta_m}(p) \neq \overline{\theta_m}(p,\mathsf{M})$. However, $\overline{\theta_m}(p) = \overline{\theta_m}(p,\mathsf{M})$ whenever the measure $\varrho = \varrho_1 \times \varrho_1 \times \cdots $ is a tensor-product of a single measure $\varrho_1$ on $[-1,1]$. This follows from two observations. First, for such a measure, we have $\theta_m(\bm{b}) = \theta_{m}(\pi(\bm{b}))$ for any permutation $\pi$. Second, if $\bm{b} \in \ell^p(\bbN)$ with $\nm{\bm{b}}_p \leq 1$, then there is a permutation $\pi$ for which $\pi(\bm{b})$ is nonincreasing, and therefore $\pi(\bm{b}) \in \ell^p_{\mathsf{M}}(\bbN)$ with $\nm{\pi(\bm{b})}_{p,\mathsf{M}} = \nm{\bm{b}}_p$. See Appendix \ref{app:theta_bar_m} for the proof of this claim, and the exact relationship between $\theta_m(\bm{b})$ and $\theta_m(\pi(\bm{b}))$ in the general case.
}

\section{Main results}\label{S:main_results}
In this section, we present our main results. In order to do this, we first introduce (tensor) Jacobi measures and polynomials, and then a key technical assumption.

\subsection{Jacobi measures}

For $\alpha,\beta > - 1$  the \textit{Jacobi measure} on $[-1,1]$ is given by
\be{
\label{jacobi-meas}
\D \omega_{\alpha,\beta}(y) = (1-y)^{\alpha} (1+y)^{\beta} \D y,
}
{where $\omega_{\alpha,\beta}(y)$ is the Jacobi weight function} \cite[Chpt.\ 4]{szego1975orthogonal}.
We define the corresponding probability measure as
\be{
\label{1d-jacobi-meas}
\varrho_{\alpha,\beta} = \frac{\omega_{\alpha,\beta}}{\int^{1}_{-1} \D \omega_{\alpha,\beta}(y)}.
}
Now let $\bm{\alpha} = (\alpha_k)_{k \in \bbN}$ and $\bm{\beta} = (\beta_k)_{k \in \bbN}$ with $\bm{\alpha} > - \bm{1}$ and $\bm{\beta} > - \bm{1}$. Then we define the associated \textit{tensor-product Jacobi probability measure} $\varrho_{\bm{\alpha},\bm{\beta}}$ on $\cU = [-1,1]^{\bbN}$ as the infinite tensor product
\be{
\label{tensor-jacobi-measure}
\varrho = \varrho_{\bm{\alpha},\bm{\beta}} = \varrho_{\alpha_1,\beta_1} \times \varrho_{\alpha_2,\beta_2} \times \cdots.
}
Our main results do not allow for arbitrary sequences $\bm{\alpha}$ and $\bm{\beta}$. In particular, we make the following assumption, which asserts that the sequences are uniformly bounded above and below.
\begin{assumption}
\label{main-ass}
There is a $\tau > 0$ such that $\tau-1 \leq \alpha_j , \beta_j \leq 1/\tau$ for all $j \in \bbN$.
\end{assumption}

It is a short argument to show that Assumption \ref{main-ass} implies Assumption \ref{measure-ass} for the measure \ef{tensor-jacobi-measure}. This follows immediately, after noticing that $\sigma_i$ is a continuous function of $\alpha_i,\beta_i$ in this case. Therefore, the lower bounds of \cf{t:partI} apply in this setting.

\subsection{Upper bounds in expectation}

We now state our main results, starting with upper bounds in expectation. In these results, we consider reconstruction maps of the form
\begin{equation*}
\cT : (\cU \times \cV)^m \rightarrow L^2_{\varrho}(\cU ; \cV).
\end{equation*}
Note that we slightly change the definition of $\cT$ from that of \ef{Theta_m-def}, so that it now depends explicitly on the sample points. In particular, given sample points $\bm{y}_1,\ldots,\bm{y}_m \in \cU$, the approximation of a function $f$ is defined as $\cT ( \{ (\bm{y}_i , f(\bm{y}_i) ) \}^{m}_{i=1} )$.
For succinctness, we do not describe the specific recovery maps we consider in these theorems. As noted, both rely on Jacobi polynomials. For the known anisotropy case, we use least squares and for the unknown anisotropy case, we use compressed sensing. See \S \ref{S:setup} and \S \ref{S:reconstruction} for the respective constructions.

\begin{theorem}[Known anisotropy]
\label{t:main-res-known}
Let   $\bm{\alpha},\bm{\beta}$ satisfy Assumption \ref{main-ass} with constant $\tau > 0$ and $\varrho = \varrho_{\bm{\alpha},\bm{\beta}}$ be the tensor-product Jacobi probability measure as in \ef{tensor-jacobi-measure}. Let $m \geq 3$, $0 < p < 1$, $\bm{b} = (b_i)_{i \in \bbN} \in \ell^p(\bbN)$ with $\bm{b} \geq \bm{0}$ and $\pi : \bbN \rightarrow \bbN$ be a bijection that gives a nonincreasing rearrangement of $\bm{b}$, i.e., $b_{\pi(1)} \geq b_{\pi(2)} \geq \cdots$. Consider sample points $\bm{y}_1,\ldots,\bm{y}_{m} \sim_{\mathrm{i.i.d.}} \varrho$.
Then there is a reconstruction map $\cT : (\cU \times  \cV)^m  \rightarrow L^2_{\varrho}(\cU ; \cV)$ depending on $\bm{b}$, $p$, $\bm{\alpha}$ and $\bm{\beta}$ only such that
\begin{equation*}
\theta_m(\bm{b}) \leq  \bbE \left ( \sup_{f \in \cH(\bm{b})} \nm{f - \cT( \{ (\bm{y}_i , f(\bm{y}_i)) \}^{m}_{i=1} )}_{L^2_{\varrho}(\cU ; \cV)} \right ) \leq C(\bm{b}_{\pi},p,\tau) \cdot \left ( \frac{m}{\log(m) } \right )^{\frac12-\frac1p},
\end{equation*}
where $\bm{b}_{\pi} = (b_{\pi(i)})_{i \in \bbN}$
 and $\theta_{m}(\bm{b})$ is as in \ef{theta-m-b}. 
Moreover, for any $t \in (p,1)$,
\begin{equation*}
\overline{\theta_m}(p) \leq \sup_{\substack{\bm{b} \in \ell^p(\bbN), \bm{b} \geq \bm{0} \\ \nm{\bm{b}}_{p} \leq 1}} \bbE \left ( \sup_{f \in \cH(\bm{b})} \nm{f - \cT( \{ (\bm{y}_i , f(\bm{y}_i)) \}^{m}_{i=1} )}_{L^2_{\varrho}(\cU ; \cV)} \right )  \leq C(p,t,\tau) \cdot m^{\frac12-\frac1t},
\end{equation*}
where $\overline{\theta_m}(p)$ is as in \eqref{eq:def_theta}. 
\end{theorem}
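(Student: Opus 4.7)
The plan is to produce both bounds from a weighted least-squares estimator on a polynomial subspace $\mathcal{P}_{S_n}\otimes\mathcal{V}$, where $\{\Psi_{\bm{\nu}}\}_{\bm{\nu}\in\mathcal{F}}$ is the tensor-product orthonormal Jacobi basis of $L^2_\varrho(\mathcal{U})$ and $S_n\subset\mathcal{F}$ is a finite lower (downward-closed) multi-index set. Given $\bm{b}$, $S_n$ will be chosen anisotropically using the rearrangement $\bm{b}_\pi$ so as to minimize the worst-case best approximation error over $\mathcal{H}(\bm{b})$, and $n$ will be of order $m/\log m$, chosen so that the stability budget of least squares from $m$ i.i.d.\ Jacobi samples is met. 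Three ingredients are needed: (i) an anisotropic best $n$-term polynomial approximation error bound for $\mathcal{H}(\bm{b})$ in the Jacobi basis; (ii) the Cohen--Davenport--Leviatan stability analysis of weighted least squares, controlled by the Christoffel-type quantity $K(S_n)=\sup_{\bm{y}\in\mathcal{U}}\sum_{\bm{\nu}\in S_n}|\Psi_{\bm{\nu}}(\bm{y})|^2$; and (iii) a nested index-set construction in the spirit of \cite{krieg2021function,krieg2021functionII} that absorbs the polylogarithmic loss arising from the condition $m\gtrsim K(S_n)\log n$.

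For (i), I would invoke the new best-approximation results flagged for \S\ref{S:Jacobi_pol}, which generalize the Chebyshev/Legendre bounds of \cite{adcock2022sparse} to arbitrary Jacobi polynomials. Since $f$ extends holomorphically to the anisotropic region $\mathcal{R}(\bm{b})$ with $\|f\|_{L^\infty(\mathcal{R}(\bm{b});\mathcal{V})}\leq 1$, the Jacobi coefficients $c_{\bm{\nu}}\in\mathcal{V}$ of $f$ decay anisotropically, and selecting the lower envelope of the $n$ largest values of a suitable $\bm{\rho}^{-\bm{\nu}}$-type majorant (computed from $\bm{b}_\pi$) yields, uniformly for $f\in\mathcal{H}(\bm{b})$,
\[
\inf_{g\in\mathcal{P}_{S_n}\otimes\mathcal{V}}\|f-g\|_{L^\infty(\mathcal{U};\mathcal{V})}\leq C(\bm{b}_\pi,p,\tau)\,n^{1/2-1/p}.
\]
For (ii), Assumption \ref{main-ass} pins the one-dimensional Jacobi parameters in a compact range, so the standard uniform bounds on normalized Jacobi polynomials and their Christoffel sums on lower sets give $K(S_n)\lesssim n^{\theta(\tau)}$ with explicit $\theta(\tau)$; the matrix-Chernoff argument of \cite{cohen2013stability} (in its Hilbert-valued version, cf.\ \cite{adcock2024efficient}) then yields, with failure probability $\eta$, a least-squares estimator $\hat f_{S_n}$ obeying $\|f-\hat f_{S_n}\|_{L^2_\varrho(\mathcal{U};\mathcal{V})}\lesssim \inf_{g\in\mathcal{P}_{S_n}\otimes\mathcal{V}}\|f-g\|_{L^\infty(\mathcal{U};\mathcal{V})}$ whenever $m\geq c\,K(S_n)\log(n/\eta)$.

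Step (iii) is where I expect the main obstacle. A naive choice balancing $n$ and $m$ would cost an extra power of $\log m$ in the effective cardinality. To recover the clean $(m/\log m)^{1/2-1/p}$ factor, I would follow the Krieg--Ullrich-type construction of nested subsets $S_{n_1}\subset S_{n_2}\subset\cdots\subset S_{n_J}$ with geometrically increasing sizes and a coupling of estimators across levels, choosing the level sizes so that the anisotropic best-approximation decay from (i) and the Christoffel growth from (ii) balance level-by-level. To convert the high-probability bound to an error in expectation, I would truncate the estimator to a ball of $L^2_\varrho$-radius comparable to $\|f\|_{L^\infty(\mathcal{R}(\bm{b});\mathcal{V})}\leq 1$ (this preserves Hilbert-valuedness); the exceptional-event contribution is then $O(1)$ and choosing $\eta=m^{-(1/p-1/2)}$ makes it negligible. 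The supremum over $f\in\mathcal{H}(\bm{b})$ can be moved inside the expectation because, conditional on $\bm{y}_1,\ldots,\bm{y}_m$, the estimator is linear in the data $\{f(\bm{y}_i)\}$ and the worst-case $f$ is bounded in $L^\infty$.

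For the uniform-in-$\bm{b}$ second bound, the reconstruction map may still depend on $\bm{b}$, so it suffices to control the constant $C(\bm{b}_\pi,p,\tau)$ uniformly over $\{\bm{b}\geq\bm{0}:\|\bm{b}\|_p\leq 1\}$. This constant blows up as the tail of $\bm{b}$ gets heavier, but for any $t\in(p,1)$ the embedding $\ell^p\hookrightarrow\ell^t$ together with $\|\bm{b}_\pi\|_t\leq\|\bm{b}\|_p\leq 1$ lets me re-run the fixed-$\bm{b}$ argument with $t$ replacing $p$, absorbing all $\bm{b}$-dependence into a $t$-dependent constant and producing the advertised rate $m^{1/2-1/t}$. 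The hardest part of the whole proof is the joint balancing in step (iii): tuning the Krieg-style level sizes while controlling Christoffel growth, anisotropic approximation rate, and a union bound over levels, all with constants explicit enough to yield the clean $(m/\log m)^{1/2-1/p}$ decay; the rest is assembly plus the truncation-for-expectation argument.
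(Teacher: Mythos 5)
Your skeleton matches the paper's at a high level (a $\bm{b}$-dependent least-squares map, matrix-Chernoff stability, Krieg-style nested sets, truncation plus the law of total expectation for the in-expectation bound, and re-running the argument with $t\in(p,1)$ for the uniform-in-$\bm{b}$ statement), but the rate bookkeeping has a genuine gap, in exactly the two places where the paper's new technical results enter. First, your ingredient (i) is false as stated: for $f\in\mathcal{H}(\bm{b})$ with $\bm{b}\in\ell^p(\bbN)$, the best $n$-term error in the \emph{uniform} norm decays like $n^{1-1/p}$ (it is controlled by a weighted $\ell^1$ coefficient tail via Stechkin), not $n^{1/2-1/p}$; the exponent $1/2-1/p$ is an $L^2$ rate. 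Consequently, feeding an $L^\infty$ best-approximation error into the Cohen--Davenport--Leviatan bound $\|f-\hat f\|_{L^2_\varrho}\lesssim\inf_g\|f-g\|_{L^\infty}$ can only deliver $(m/\log m)^{1-1/p}$. The mechanism that actually produces $1/2-1/p$ is the level-by-level control of the discrete seminorm of the residual, $\|f-f_{S}\|_{\mathsf{disc}}\le\sum_{j\ge1}\sigma_{\max}(\bm{A}^{(j)})\,\|f-f_{S_{j-1}}\|_{L^2_\varrho(\cU;\cV)}$, over nested sets of doubling (weighted) size with a matrix Chernoff bound at every level (Steps 3--5 of the paper's proof of Theorem \ref{t:prob-bound-known}); you invoke the Krieg-style nesting only to ``absorb the polylogarithmic loss,'' so this essential step — the one that converts $L^2$-type best-approximation decay into a bound for the sampled residual — is missing from your argument.

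Second, your stability budget is measured in the unweighted cardinality $n$ together with $K(S_n)\lesssim n^{\theta(\tau)}$. Under Assumption \ref{main-ass} the Jacobi basis is unbounded and $\theta(\tau)>1$ in general, so the condition $m\gtrsim K(S_n)\log(\cdot)$ forces $n\sim(m/\log m)^{1/\theta(\tau)}$ and your final rate degrades to $(m/\log m)^{(1/2-1/p)/\theta(\tau)}$, not the advertised one; your plan as written only works for uniformly bounded bases such as Chebyshev. The paper's fix is to measure index sets in the weighted cardinality $|S|_{\bm{u}}=\sum_{\bm{\nu}\in S}u_{\bm{\nu}}^2$ with $u_{\bm{\nu}}=\|\Psi_{\bm{\nu}}\|_{L^\infty(\cU)}$ as in \eqref{u-def}, which dominates the Christoffel quantity, and to prove (Theorem \ref{t:weighted-lp-error}) that the weighted best $(k,\bm{u})$-term error still decays like $k^{1/2-1/p}$ with $k$ the \emph{weighted} budget; taking $k\asymp m/\log(m/\epsilon)$ then gives the clean rate with no algebraic loss. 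Relatedly, your uniform-in-$\bm{b}$ step is too quick: a bound $\|\bm{b}\|_t\le\|\bm{b}\|_p\le1$ does not by itself make the fixed-$\bm{b}$ constant uniformly bounded (the constant depends on $\bm{b}$ through tail-truncation choices); the paper needs the separate uniform-constant result in Theorem \ref{t:weighted-lp-error}(b), proved via the monotone rearrangement $b_{\pi(j)}\le j^{-1/p}$, although this part of your plan is repairable along the paper's lines.
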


Due to the considerations discussed in Remark \ref{rem:theta_bar_equate}, a crucial component of this theorem is that the constant in the first bound depends on the monotone sequence $\bm{b}_{\pi}$ and not $\bm{b}$. This is what allows us to derive the bound for $\overline{\theta_m}(p)$ in the second part of the theorem.

Theorem \ref{t:main-res-known} is suitable for the known anisotropy case, since the reconstruction map depends on $\bm{b}$. In our next result, we derive a reconstruction map that is independent of both $\bm{b}$ and $p$, which makes it suitable for the unknown anisotropy setting.

\begin{theorem}[Unknown anisotropy]
\label{t:main-res-unknown}
Let  $\bm{\alpha},\bm{\beta}$ satisfy Assumption \ref{main-ass} with constant $\tau > 0$ and $\varrho = \varrho_{\bm{\alpha},\bm{\beta}}$ be the tensor-product Jacobi probability measure as in \ef{tensor-jacobi-measure}.  Let $m \geq 3$ and consider sample points $\bm{y}_1,\ldots,\bm{y}_{m} \sim_{\mathrm{i.i.d.}} \varrho$. Then there is a reconstruction map $\cT : (\cU \times  \cV)^m  \rightarrow L^2_{\varrho}(\cU ; \cV)$ depending on $\bm{\alpha}$ and $\bm{\beta}$ only such that
\begin{equation*}
\theta_m(\bm{b}) \leq  \bbE \left ( \sup_{f \in \cH(\bm{b})} \nm{f - \cT( \{ (\bm{y}_i , f(\bm{y}_i)) \}^{m}_{i=1} )}_{L^2_{\varrho}(\cU ; \cV)} \right ) \leq C(\bm{b},p,\tau) \cdot \left ( \frac{m}{\log^{5}(m) } \right )^{\frac12-\frac1p},
\end{equation*}
for all $0 < p <1$ and $\bm{b} \in \ell^p_{\mathsf{M}}(\bbN)$ with $\bm{b} \geq \bm{0}$, where $\theta_{m}(\bm{b})$ is as in \ef{theta-m-b}.
Moreover, for any $0 < p < 1$ and $t \in (p,1)$,
\bes{
\theta_m(p,\mathsf{M}) \leq \bbE \left ( \sup_{f \in \cH(p,\mathsf{M})} \nm{f - \cT( \{ (\bm{y}_i , f(\bm{y}_i)) \}^{m}_{i=1} )}_{L^2_{\varrho}(\cU ; \cV)} \right )  \leq C(p,t,\tau) \cdot m^{\frac12-\frac1t},
}
where $\theta_m(p,\mathsf{M})$  is as in \eqref{theta-upsilon-unknown-aniso}.
\end{theorem}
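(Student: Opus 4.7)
Let $\{\Psi_\bnu\}$ denote the tensor-product Jacobi polynomials orthonormal in $L^2_\varrho(\cU)$, indexed by the set of finitely-supported multi-indices $\bnu \in \bbN_0^{\bbN}$. Every $f \in L^2_\varrho(\cU;\cV)$ expands as $f = \sum_{\bnu} c_\bnu \Psi_\bnu$ with $c_\bnu \in \cV$, and I write $u_\bnu = \|\Psi_\bnu\|_{L^\infty(\cU)}$ for the \emph{intrinsic} weights, which under Assumption \ref{main-ass} satisfy growth bounds depending only on $\tau$. Define $\cT$ to be the weighted quadratically-constrained $\ell^1$-minimization (weighted square-root LASSO) decoder applied on a universal truncation set $\Lambda = \Lambda_m$ of cardinality polynomial in $m$ — for instance a finite anchored lower set as in \cite{adcock2024efficient}. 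Crucially, $\Lambda$, the weights $u_\bnu$, and the whole reconstruction map depend only on $\bm{\alpha}$, $\bm{\beta}$ and $m$, not on $\bm{b}$ or $p$, so $\cT$ is admissible for the unknown-anisotropy setting.

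The argument then has two parts. First, I extend the weighted compressed-sensing framework of \cite{adcock2024efficient} from tensor Chebyshev/Legendre to arbitrary tensor Jacobi polynomials under Assumption \ref{main-ass}. Bounded-orthonormal-systems analysis applied to the random matrix $A = (\Psi_\bnu(\bm{y}_i)/\sqrt{m})_{i \in [m],\, \bnu \in \Lambda}$ yields a weighted restricted isometry property of order $2s$ on an event of probability $\geq 1 - m^{-c}$, provided $m \geq C(\tau) \cdot s \cdot \log^4(m)$, and on this event the decoder delivers the \emph{uniform} bound
\begin{equation*}
\|f - \hat{f}\|_{L^2_\varrho(\cU;\cV)} \leq C_1 \cdot s^{-1/2} \cdot \sigma_{s,\mathbf{u}}(\bm{c})_{1,\cV} + C_2 \cdot \|f - f_\Lambda\|_{L^2_\varrho(\cU;\cV)},
\end{equation*}
holding simultaneously for all $f$, where $\sigma_{s,\mathbf{u}}(\bm{c})_{1,\cV}$ is the weighted best $s$-term $\cV$-valued $\ell^1$-error. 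Second, the new best $s$-term Jacobi-polynomial bounds of \S \ref{S:Jacobi_pol} give, for $f \in \cH(\bm{b})$ with $\bm{b} \in \ell^p_{\mathsf{M}}(\bbN)$,
\begin{equation*}
s^{-1/2} \sigma_{s,\mathbf{u}}(\bm{c})_{1,\cV} + \|f - f_\Lambda\|_{L^2_\varrho(\cU;\cV)} \leq C(\bm{b},p,\tau) \cdot s^{1/2-1/p},
\end{equation*}
once $|\Lambda|$ grows polynomially in $m$. Setting $s \sim m/\log^4(m)$ and folding in the logarithm matching the truncation of $\Lambda$ yields, on the wRIP event, the bound $C(\bm{b},p,\tau) \cdot (m/\log^5(m))^{1/2-1/p}$. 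On the complementary event, truncating the decoder's output to the unit ball of $L^2_\varrho(\cU;\cV)$ combined with the failure probability $\leq m^{-c}$ contributes a negligible term in expectation, completing the first bound.

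For the uniform bound over $\cH(p,\mathsf{M})$, the reconstruction $\cT$ is unchanged and the same wRIP event governs the analysis; it only remains to replace $C(\bm{b},p,\tau)$ by a constant uniform over $\{\bm{b} \geq \bm{0} : \|\bm{b}\|_{p,\mathsf{M}} \leq 1\}$. Any such $\bm{b}$ admits a monotone rearrangement of $\ell^p$-norm at most $1$, which lies in $\ell^t$ with $\ell^t$-norm at most $C(p,t)$ for any $t \in (p,1)$. Re-running the best $s$-term argument of \S \ref{S:Jacobi_pol} with exponent $t$ yields $C(p,t,\tau) \cdot s^{1/2-1/t}$ uniformly in $\bm{b}$, and since $1/2 - 1/t > 1/2 - 1/p$ the surplus in the exponent absorbs the polylogarithmic factor and delivers $C(p,t,\tau) \cdot m^{1/2-1/t}$. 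The main obstacle I anticipate is the \emph{uniform-in-$f$} wRIP-to-error deduction: the recovery analysis in \cite{adcock2024efficient} is largely nonuniform, so one must sharpen the proof (via the null space property at slightly enlarged sparsity) so that a single realization of the wRIP event simultaneously controls $\sup_{f \in \cH(\bm{b})} \|f - \hat{f}\|_{L^2_\varrho(\cU;\cV)}$, and all of this must be reproved for arbitrary Jacobi polynomials using the new tail and best-$s$-term estimates of \S \ref{S:Jacobi_pol}.
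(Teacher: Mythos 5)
Your overall architecture — a weighted $\ell^1$ decoder over a fixed, $\bm{b}$- and $p$-independent truncation set, a weighted RIP for the Jacobi sampling matrix, the best-term estimates of \S\ref{S:Jacobi_pol}, and a total-expectation argument — is the same as the paper's (\S\ref{S:reconstruction} and \S\ref{S:proofs_main}). However, two quantitative steps in your plan do not close. The central one is the uniform error bound you claim, with truncation term $C_2\,\|f-f_\Lambda\|_{L^2_\varrho(\cU;\cV)}$, "holding simultaneously for all $f$" on the wRIP event. A single realization of the wRIP/rNSP only controls the matrix; the noise vector $\widetilde{\bm{e}} = m^{-1/2}(f(\bm{y}_i)-f_\Lambda(\bm{y}_i))_{i=1}^m$ still depends on $f$, and in a uniform argument it can only be bounded deterministically, via $\|\widetilde{\bm{e}}\|_{2;\cV} \leq \|f-f_\Lambda\|_{L^\infty(\cU;\cV)} \leq \|\bm{c}-\bm{c}_\Lambda\|_{1,\bm{u};\cV}$ (the Bernstein-type replacement by the $L^2$ tail is inherently per-$f$; see Remark \ref{rem:uniform-vs-nonuniform}). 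With the weighted $\ell^1$ tail, Theorem \ref{t:weighted-lp-error-monotone} (with $q=1$) only gives decay $n^{1-1/p}$ in the size $n$ of the anchored sets contained in $\Lambda$, and matching this to $k^{1/2-1/p}$ forces $n \gtrsim k^{(1/p-1/2)/(1/p-1)}$, whose exponent blows up as $p \to 1^-$. Since $\cT$ must be independent of $p$, no truncation set of cardinality polynomial in $m$ suffices; the paper takes the superpolynomial choice $n = \lceil k^{\sqrt{g(m)}}\rceil$ with $g(m)=\log(m)$. This is also where the fifth logarithm genuinely comes from: $\log^2(\E n) \approx g(m)\log^2(m)$ enters the RIP sample complexity, so $k \approx m/(\log^4(m)\,g(m))$. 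With a polynomially sized $\Lambda$ you would only ever see $\log^4(m)$, but then the truncation term is not $\lesssim k^{1/2-1/p}$ for $p$ near $1$, so your accounting of the $\log^5$ factor is inconsistent with your choice of $\Lambda$.

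The second gap is the expectation step. Truncating the output to the unit ball and invoking a failure probability $m^{-c}$ with a universal $c$ does not yield a negligible contribution for all $0<p<1$: the bad event contributes of order $m^{-c}$ in expectation, while the target is $m^{1/2-1/p}$ up to logarithms, and $m^{-c} \gg m^{1/2-1/p}$ whenever $1/p-1/2 > c$, so no constant $C(\bm{b},p,\tau)$ absorbs it. The failure probability must be tied to $p$ (or made superpolynomially small); the paper takes $\epsilon = m^{-1/p}$ in Theorem \ref{t:prob-bound-unknown} and, instead of truncating, bounds the output on the bad event by $1/\lambda \lesssim \sqrt{m}$ using the minimality of $\hat{\bm{c}}_\Lambda$ in \eqref{eq:def_Tunkn}, so that $\epsilon \cdot \sqrt{m} \lesssim m^{1/2-1/p}$. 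Your treatment of the second bound (passing to exponent $t \in (p,1)$ uniformly over $\|\bm{b}\|_{p,\mathsf{M}} \leq 1$ and letting the surplus in the exponent absorb the polylogarithm) is in the right spirit and corresponds to parts (b) of Theorems \ref{t:weighted-lp-error} and \ref{t:weighted-lp-error-monotone}, but it too rests on the uniform recovery bound whose proof is the missing piece above.
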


Note that, in both Theorems \ref{t:main-res-known} and \ref{t:main-res-unknown}, the leftmost inequalities (namely, $\theta_m(\bm{b}) \leq \ldots$ and so forth) follow immediately from the definition of the relevant quantities, i.e., \ef{theta-m-b}--\ef{theta-upsilon-unknown-aniso}.

\subsection{Discussion}

We now discuss some important aspects of Theorems \ref{t:main-res-known} and \ref{t:main-res-unknown}.

Theorem~\ref{t:partI} provides lower bounds for the various $m$-widths, and establishes that the rate $m^{1/2-1/p}$ is the best possible. These lower bounds are valid for Banach-valued functions, whereas our main results hold only when $\cV$ is a Hilbert space. This gap was previously encountered in \cite{adcock2025near}. There, (nonuniform) upper bounds for Legendre and Chebyshev polynomial approximations were shown for Banach-valued functions. However, the algebraic rates of convergence shown therein are slower than those of the Hilbert-valued case. A similar extension of Theorems \ref{t:main-res-known} and \ref{t:main-res-unknown} to Banach spaces would also result in similar reductions of the rates. As of now, it is unknown how to attain upper bounds for the $m$-width for Banach-valued functions that match the lower bounds.

In the second parts of Theorems \ref{t:main-res-known} and \ref{t:main-res-unknown}, we trade the (poly)logarithmic factor from the first part for an (arbitrarily small) reduction in the algebraic rate in order to obtain an upper bound for the corresponding $m$-width. This is done because it is not possible, with our current proof techniques, to bound the constant $C(\bm{b}_{\pi},p,\tau)$ (respectively, $C(\bm{b},p,\tau)$) uniformly over sequences $\bm{b} \in \ell^p(\bbN)$ with $\nm{\bm{b}}_{p} \leq 1$ (respectively, $\bm{b} \in \ell^p_{\mathsf{M}}(\bbN)$ with $\nm{\bm{b}}_{p,\mathsf{M}} \leq 1$).
Instead, we use an idea from \cite{adcock2024optimal} which allows us to bound $\sup_{\nm{\bm{b}}_{p} \leq 1} C(\bm{b}_{\pi},t,\tau) < \infty$ for $t \in (p,1)$  and likewise for $C(\bm{b},t,\tau)$. See the proofs of Theorems \ref{t:main-res-known} and \ref{t:main-res-unknown} in \S\ref{S:proofs_prob} for further details.

An upper bound for $\overline{\theta_m}(p,\mathsf{M})$ was previously obtained in \cite[Theorem~4.6]{adcock2024optimal} in the case of the uniform measure. This result also gave a tighter bound for $\theta_m(\bm{b})$ without the $\log(m)$ term. However, this result does not consider i.i.d. pointwise samples, nor least-squares approximation.  
Due to the coupon collector's problem, we suspect that the rate $(m/\log(m))^{1/2-1/p}$ may be the best possible for this class of functions when using i.i.d. samples. See, e.g., \cite[Rmk.~3]{dolbeault2022optimal}  for a related discussion. 
On the other hand, it may be possible to remove this log factor by considering non-i.i.d.\ pointwise samples. A recent line of research uses frame subsampling techniques to facilitate stable and accurate least-squares approximation from a linear number of samples, improving on the log-linear number of samples needed in the i.i.d.\ case. See \cite{dolbeault2024randomized,bartel2023constructive,limonova2022sampling,krieg2024random,krieg2021function,nagel2021upper,pozharska2022note,temlyakov2021optimal} and references therein.

As noted previously, the unknown anisotropy case studied in Theorem \ref{t:main-res-unknown} requires a different approach to the least-squares approximation used in Theorem \ref{t:main-res-known}. Due to the intrinsic unknown nature of the entries of $\bm{b}$, we use techniques based on compressed sensing to establish theoretical guarantees on the recovery of certain polynomial coefficients in a large (but finite) index set. A key step involves using well-established results that assert the (weighted) \textit{Restricted Isometry Property (RIP)} for the relevant measurement matrix. The need to assert the RIP leads to a $\log^4(m)$ factor in the first bound in Theorem \ref{t:main-res-unknown}. Unfortunately, it is well-known that improving the log factors in RIP-type estimates is a challenging open problem. See \cite{brugiapaglia2021sparse} for further discussion.

\rem{
\label{rem:log-term}
The one additional power of $\log(m)$ in Theorem \ref{t:main-res-unknown}, which results in $\log^4(m) \cdot \log(m) = \log^5(m)$, stems from the desire to have a reconstruction map $\cT$ that is valid for all $0 < p < 1$.  In fact, the term
$\log^5(m)$ can be replaced by $\log^4(m) g(m)$, where $g$ is any increasing function $g : \bbN \rightarrow [1,\infty)$ with $g(m) \rightarrow +\infty$ as $m \rightarrow + \infty$ and which grows subalgebraically fast in $m$. We use $g(m) = \log(m)$ to simplify the presentation. In the general case, the constants $C$ also depend on $g$. See \S \ref{S:reconstruction} and \S \ref{S:proofs_main}.
}
While Theorem \ref{t:main-res-unknown} relies on compressed sensing, it is also possible to derive a weaker version of it by using the least-squares approximation from Theorem \ref{t:main-res-known}. Notice that any $\bm{b} \in \ell^p_{\mathsf{M}}(\bbN)$, $\bm{b} \geq \bm{0}$, $\nm{\bm{b}}_{p,\mathsf{M}} \leq 1$, satisfies $\bm{b} \leq \bm{b}_p$, where $\bm{b}_p = (i^{-1/p})_{i \in \bbN}$. It follows from \ef{def:b-eps-holo} that $\cR(\bm{b}_p,\varepsilon) \subseteq \cR(\bm{b},\varepsilon)$, which implies that $\cH(\bm{b}) \subseteq \cH(\bm{b}_p)$ and, in turn, that $\theta_m(\bm{b}) \leq \theta_m(\bm{b}_p)$. Now consider the reconstruction map $\cT = \cT_p$ from \cf{t:main-res-known} based on $\bm{b}_p$. Since $\bm{b}_p \in \ell^t(\bbN)$ for any $t > p$, the use of this map yields the bounds
\bes{
\theta_m(\bm{b}) \leq  \bbE \left ( \sup_{f \in \cH(\bm{b}_p)} \nm{f - \cT_p( \{ (\bm{y}_i , f(\bm{y}_i)) \}^{m}_{i=1} )}_{L^2_{\varrho}(\cU ; \cV)} \right ) \leq C(p,t,\tau) \cdot m^{\frac12-\frac1t},
}
and, since $\cT_p$ is independent of $\bm{b}$, 
\bes{
\theta_m(p,\mathsf{M}) \leq \bbE \left ( \sup_{f \in \cH(p,\mathsf{M})} \nm{f - \cT_p( \{ (\bm{y}_i , f(\bm{y}_i)) \}^{m}_{i=1} )}_{L^2_{\varrho}(\cU ; \cV)} \right )  \leq C(p,t,\tau) \cdot m^{\frac12-\frac1t},
}
for any $t \in (p,1)$. However, this approach has two disadvantages. First, the rate in the bound for $\theta_m(\bm{b})$ is strictly slower than the corresponding result in \cf{t:main-res-unknown}. Second, in both cases, the reconstruction map $\cT_p$ depends on $p$, whereas the map $\cT$ in \cf{t:main-res-unknown} is valid for all $0 < p < 1$.

\subsection{Upper bounds in probability}

We now state our two main `in probability' results.

\begin{theorem}[Known anisotropy]
\label{t:prob-bound-known}
Let   $\bm{\alpha},\bm{\beta}$ satisfy Assumption \ref{main-ass} with constant $\tau > 0$ and $\varrho = \varrho_{\bm{\alpha},\bm{\beta}}$ be the tensor-product Jacobi probability measure \ef{tensor-jacobi-measure}.
Let $0 < \epsilon < 1$, $m \geq 3$, $0 < p < 1$, $\bm{b} \in \ell^p(\bbN)$ with $\bm{b} \geq \bm{0}$ and $\pi : \bbN \rightarrow \bbN$ be a bijection that gives a nonincreasing rearrangement of $\bm{b}$, i.e., $b_{\pi(1)} \geq b_{\pi(2)} \geq \cdots$. Consider sample points $\bm{y}_1,\ldots,\bm{y}_{m} \sim_{\mathrm{i.i.d.}} \varrho$. Then there exists a reconstruction map $\cT : (\cU \times  \cV)^m  \rightarrow L^2_{\varrho}(\cU ; \cV)$   depending on $\bm{b}$, $\epsilon$, $\bm{\alpha}$ and $\bm{\beta}$ only such that
\bes{
\bbP \left ( \sup_{f \in \cH(\bm{b})} \nm{f - \cT( \{ (\bm{y}_i , f(\bm{y}_i) )\}^{m}_{i=1} )}_{L^2_{\varrho}(\cU ; \cV)} \leq C(\bm{b}_{\pi},p,{\tau}) \cdot \left ( \frac{m}{\log(m) + \log(\epsilon^{-1}) } \right )^{\frac12-\frac1p} \right ) \geq 1 - \epsilon,
}
where $\bm{b}_{\pi} = (b_{\pi(i)})_{i \in \bbN}$.
\end{theorem}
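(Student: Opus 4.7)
The plan is to reuse the known-anisotropy construction from Theorem~\ref{t:main-res-known} and swap the Markov/Jensen step that converts a high-probability least-squares bound into an expectation bound for a direct tail bound. Specifically, I would take $\cT$ to be the weighted discrete least-squares projection onto $\cP_{S_m} \otimes \cV$, where $\cP_{S_m} = \mathrm{span}\{\Psi_{\bnu} : \bnu \in S_m\}$ and $\{\Psi_{\bnu}\}_{\bnu \in \bbN_0^{\bbN}}$ are the orthonormal tensor Jacobi polynomials for $\varrho_{\bm{\alpha},\bm{\beta}}$. The index set $S_m \subset \bbN_0^{\bbN}$ is the nested, lower, monotone set built from the nonincreasing rearrangement $\bm{b}_\pi$ by the Krieg-style construction of \cite{krieg2021function,krieg2021functionII} used in the known-$\bm{b}$ part of \S\ref{S:reconstruction}. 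Its cardinality is calibrated so that $|S_m| \asymp m/(\log m + \log \epsilon^{-1})$, with the parameter $\epsilon$ now entering explicitly through the log term.

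Next, I would invoke the standard stability result for weighted discrete least squares (e.g.\ \cite{cohen2013stability}): provided
\bes{
m \geq c \cdot K(S_m) \cdot \bigl( \log(|S_m|) + \log(\epsilon^{-1}) \bigr), \qquad K(S_m) := \sup_{\bm{y} \in \cU} \sum_{\bnu \in S_m} |\Psi_{\bnu}(\bm{y})|^2,
}
on an event $\cA_\epsilon$ of probability at least $1-\epsilon$ that depends only on the random design $\{\bm{y}_i\}$ and on $S_m$, the least-squares estimator $\hat f = \cT(\{(\bm{y}_i,f(\bm{y}_i))\}_{i=1}^m)$ satisfies, for every $f \in \cH(\bm{b})$,
\bes{
\|f - \hat f\|_{L^2_{\varrho}(\cU;\cV)} \leq C \cdot \inf_{g \in \cP_{S_m}\otimes \cV} \|f-g\|_{L^{\infty}(\cU;\cV)}.
}
The uniformity of this inequality over $f \in \cH(\bm{b})$ is automatic because the exceptional event is a property of the design and of the projection only. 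Assumption~\ref{main-ass} then gives uniform control of the one-dimensional Jacobi polynomials (cf.\ \S\ref{S:Jacobi_pol}), from which $K(S_m)$ admits a bound polynomial in $|S_m|$ whose exponent depends only on $\tau$ and on the lower-set structure of $S_m$.

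For the best-approximation term I would apply the extended Jacobi error bounds of \S\ref{S:Jacobi_pol}, which generalize the Chebyshev/Legendre bounds of \cite{adcock2022sparse,adcock2024efficient} and, combined with the specific choice of $S_m$, yield
\bes{
\sup_{f \in \cH(\bm{b})} \inf_{g \in \cP_{S_m}\otimes \cV} \|f-g\|_{L^{\infty}(\cU;\cV)} \leq C(\bm{b}_\pi, p, \tau) \cdot |S_m|^{\frac12 - \frac1p}.
}
Substituting $|S_m| \asymp m/(\log m + \log \epsilon^{-1})$ then produces the desired rate. The constant depends on the monotone rearrangement $\bm{b}_\pi$ rather than on $\bm{b}$ itself because both the construction of $S_m$ and the best-approximation bound depend on $\bm{b}$ only through $\bm{b}_\pi$, which is exactly the feature needed for compatibility with \cf{t:main-res-known} and with Remark~\ref{rem:theta_bar_equate}.

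The principal obstacle, as in the expectation version of the theorem, is the simultaneous calibration of $|S_m|$, $K(S_m)$, and the best-approximation exponent: one must pick the nested Krieg-type set large enough that $|S_m|^{1/2-1/p}$ already lies below the target $(m/(\log m + \log\epsilon^{-1}))^{1/2-1/p}$, yet small enough that the sample-complexity condition $m \gtrsim K(S_m)(\log|S_m| + \log\epsilon^{-1})$ is met under the Jacobi Christoffel bounds guaranteed by Assumption~\ref{main-ass}. This is precisely the calibration performed in Theorem~\ref{t:main-res-known}; once the high-probability form of the Cohen--Migliorati stability estimate is used in place of its expectation form, the remainder of the argument transports essentially verbatim, with $\log(\epsilon^{-1})$ replacing the logarithmic failure-probability term absorbed previously by Markov's inequality.
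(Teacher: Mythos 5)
There is a genuine gap at the heart of your plan: the best-approximation estimate you rely on does not hold at the rate you need. Passing through the uniform Cohen--Migliorati-type stability bound $\|f-\hat f\|_{L^2_{\varrho}(\cU;\cV)} \lesssim \inf_{g\in\cP_{S}\otimes\cV}\|f-g\|_{L^{\infty}(\cU;\cV)}$ forces you to pay for the truncation error in the uniform norm, and for $\cH(\bm{b})$ with $\bm{b}\in\ell^p(\bbN)$ the available uniform-norm bound comes from the $\ell^1_{\bm{u}}$-tail of the coefficients: by Theorem \ref{t:weighted-lp-error} with $q=1$ (the Jacobi generalization of the weighted Stechkin estimates), $\|f-f_S\|_{L^\infty(\cU;\cV)} \leq \|\bm{c}-\bm{c}_S\|_{1,\bm{u};\cV} \leq C(\bm{b}_\pi,p,\tau)\,k^{1-1/p}$, not $k^{1/2-1/p}$. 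The exponent $1/2-1/p$ is an $L^2$ (i.e.\ $\ell^2_{\bm{u}}$-tail) rate; nothing in this paper (or in the cited literature) gives it in $L^{\infty}$, and your proposal offers no argument for it. With the correct $L^\infty$ rate your route only delivers $(m/(\log m+\log\epsilon^{-1}))^{1-1/p}$, which falls short of the theorem by a factor of roughly $\sqrt{m}$. A secondary calibration issue: you bound $K(S_m)$ by a polynomial in $|S_m|$ of degree depending on $\tau$; if that degree exceeds $1$, the sample condition $m\gtrsim K(S_m)\log(\cdot)$ caps $|S_m|$ well below $m/\log m$ and further degrades the rate. The paper sidesteps this by selecting $S$ through its \emph{weighted} cardinality $|S|_{\bm{u}}=\sum_{\bnu\in S}u_{\bnu}^2 \geq K(S)$, calibrated directly to $k=m/(c\log(m/\epsilon))$, which is also the quantity appearing in Theorem \ref{t:weighted-lp-error}. (Also, the sets produced by Theorem \ref{t:weighted-lp-error} are largest-weighted-entry sets, not lower sets; lower/anchored structure is only used in the compressed-sensing part.)

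The paper's actual proof is built precisely to get uniformity over $f\in\cH(\bm{b})$ \emph{and} the $L^2$-rate $k^{1/2-1/p}$ simultaneously, which is what your approach cannot do: it keeps the basic least-squares inequality $\|f-\hat f\|_{L^2_\varrho} \leq \|f-f_S\|_{L^2_\varrho} + \sigma_{\min}(\bm{A})^{-1}\|f-f_S\|_{\mathsf{disc}}$, but instead of bounding the discrete seminorm by an $L^\infty$ norm (too lossy) or by Bernstein's inequality (only nonuniform, cf.\ Remark \ref{rem:uniform-vs-nonuniform-known}), it expands $f-f_S$ over a nested dyadic family $S=S_0\subseteq S_1\subseteq\cdots$ with $|S_j|_{\bm{u}}\leq 2^j k$ from Theorem \ref{t:weighted-lp-error}, writes $\|f-f_S\|_{\mathsf{disc}}\leq\sum_j\|f_{S_j\setminus S_{j-1}}\|_{\mathsf{disc}}\leq\sum_j \sigma_{\max}(\bm{A}^{(j)})\,\|f-f_{S_{j-1}}\|_{L^2_\varrho}$, and controls $\sigma_{\min}(\bm{A})$ and all $\sigma_{\max}(\bm{A}^{(j)})$ simultaneously by matrix Chernoff bounds on an event depending only on the design and the sets. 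Each $\|f-f_{S_{j-1}}\|_{L^2_\varrho}$ is then bounded by the $\ell^2_{\bm{u}}$-tail estimate $C(\bm{b}_\pi,p,\tau)(2^{j-1}k)^{1/2-1/p}$, and the resulting series converges. If you want to salvage your proposal, you would need to replace the $L^\infty$ step by this shell decomposition (or prove an $L^\infty$ best-approximation bound at rate $s^{1/2-1/p}$, which is not available); the permutation bookkeeping with $\bm{\alpha}_\pi,\bm{\beta}_\pi$ that makes the constant depend on $\bm{b}_\pi$ is the one part of your plan that matches the paper.
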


In this result, the reconstruction map depends on $\bm{b}$ and the probabilistic bound is for a fixed $\bm{b}$. Theorem \ref{t:prob-bound-known} is therefore suitable for the known anisotropy case. In the next result, we consider the unknown anisotropy case. There, the reconstruction map is independent of $\bm{b}$ and the probabilistic bound considers the intersection of events corresponding to all $\bm{b} \in \ell^p_{\mathsf{M}}(\bbN)$, $\bm{b} \geq \bm{0}$ and $0 < p < 1$.

\begin{theorem}[Unknown anisotropy]
\label{t:prob-bound-unknown}
Let   $\bm{\alpha},\bm{\beta}$ satisfy Assumption \ref{main-ass} with constant $\tau > 0$ and $\varrho = \varrho_{\bm{\alpha},\bm{\beta}}$ be the tensor-product Jacobi probability measure \ef{tensor-jacobi-measure}. 
Let $0 < \epsilon < 1$, $m \geq 3$ and consider sample points $\bm{y}_1,\ldots,\bm{y}_{m} \sim_{\mathrm{i.i.d.}} \varrho$.  Then there exists a reconstruction map $\cT : (\cU \times  \cV)^m  \rightarrow L^2_{\varrho}(\cU ; \cV)$ depending on $\epsilon$, $\bm{\alpha}$ and $\bm{\beta}$ only such that
\begin{align*}
\bbP \Bigg ( & \sup_{f \in \cH(\bm{b})} \nm{f - \cT( \{( \bm{y}_i , f(\bm{y}_i)) \}^{m}_{i=1} )}_{L^2_{\varrho}(\cU ; \cV)} \leq C(\bm{b},p,\tau) \cdot \left ( \frac{m}{\log^{5}(m) + \log(\epsilon^{-1}) } \right )^{\frac12-\frac1p}
\\
& \forall \bm{b} \in \ell^p_{\mathsf{M}}(\bbN),\ \bm{b} \geq \bm{0},\ 0 < p < 1 \Bigg ) \geq 1 - \epsilon.
\end{align*}
\end{theorem}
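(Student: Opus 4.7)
The plan is to use the same compressed sensing reconstruction map $\cT$ that is employed in Theorem~\ref{t:main-res-unknown}: a weighted $\ell^1$-minimization procedure over a large but finite tensor-product Jacobi index set $\Lambda \subset \bbN_0^{\bbN}$. By construction $\cT$ depends only on $\bm{\alpha}$, $\bm{\beta}$, $\epsilon$ and the data $\{(\bm{y}_i, f(\bm{y}_i))\}_{i=1}^{m}$; in particular, it is independent of $\bm{b}$ and $p$. My task is therefore to show that a \emph{single} high-probability event makes the deterministic recovery analysis effective simultaneously for every admissible pair $(\bm{b}, p)$.

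The core probabilistic object is the weighted Restricted Isometry Property (RIP) of order $s$ for the renormalized sampling matrix $\bm{A} \in \bbR^{m \times |\Lambda|}$ built from $\bm{y}_1,\ldots,\bm{y}_m$ and the tensor Jacobi basis. Standard RIP estimates for bounded orthonormal systems, combined with the uniform pointwise bounds on tensor Jacobi polynomials that follow from Assumption~\ref{main-ass}, yield a constant $c$ such that whenever $m \geq c \cdot s \cdot (\log^4(m) + \log(\epsilon^{-1}))$ the required weighted RIP of order $s$ holds with probability at least $1-\epsilon$. Call this event $\cA$. Since $\cA$ is a property of $\bm{A}$ alone, it does \emph{not} depend on $\bm{b}$ or $p$, and so no union bound over $(\bm{b},p)$ will be required.

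On $\cA$, the deterministic recovery theory for weighted $\ell^1$-minimization (following the arguments of \cite{adcock2024efficient} that underlie Theorem~\ref{t:main-res-unknown}) yields, for every $f \in \cH(\bm{b})$, an estimate of the form
\[
\nm{f - \cT(\{(\bm{y}_i, f(\bm{y}_i))\}_{i=1}^{m})}_{L^2_{\varrho}(\cU;\cV)} \leq c_1 \cdot \sigma_s(f)_{w,1} + E_{\Lambda}(f),
\]
where $\sigma_s(f)_{w,1}$ is the weighted best $s$-term approximation error of the Jacobi coefficient sequence of $f$ and $E_{\Lambda}(f)$ collects the tail outside $\Lambda$. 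For any $\bm{b} \in \ell^p_{\mathsf{M}}(\bbN)$ with $\bm{b} \geq \bm{0}$ and any $f \in \cH(\bm{b})$, the Jacobi best $s$-term decay rates of Section~\ref{S:Jacobi_pol} give $\sigma_s(f)_{w,1} \leq C(\bm{b},p,\tau) \cdot s^{1/2-1/p}$, and $\Lambda$ will be enlarged subalgebraically in $m$ (as in Theorem~\ref{t:main-res-unknown}) so that $E_{\Lambda}(f)$ does not spoil this rate. Choosing $s \simeq m / (\log^4(m) + \log(\epsilon^{-1}))$ then produces the claimed bound on $\cA$ for every $(\bm{b},p)$ at once.

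The main obstacle is engineering a single $\cT$ whose index set $\Lambda$ is adequate for all $\bm{b} \in \ell^p_{\mathsf{M}}(\bbN)$ simultaneously across all $p \in (0,1)$. Because $\Lambda$ must absorb the tail error for every such $(\bm{b},p)$, it has to grow faster than any fixed polynomial in the effective sparsity $s$; this is precisely the mechanism that converts the $\log^4(m)$ coming from the RIP into the $\log^5(m)$ appearing in the statement, with $\log(m)$ playing the role of the subalgebraic growth function $g(m)$ of Remark~\ref{rem:log-term}. Once this uniform choice of $\Lambda$ is in place, the universality of $\ell^1$-recovery under the single RIP event $\cA$ transfers the analysis to the deterministic approximation estimates, and all of the $\bm{b}$- and $p$-dependence is confined to the constant $C(\bm{b},p,\tau)$, as required.
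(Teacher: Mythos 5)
Your proposal follows essentially the same route as the paper's proof: the same weighted $\ell^1$/compressed sensing map over a hyperbolic-cross index set whose truncation level grows superpolynomially in the effective sparsity, a single weighted RIP (hence rNSP) event independent of $(\bm{b},p)$ so no union bound is needed, the weighted and anchored best $s$-term rates of \S\ref{S:Jacobi_pol} to bound the sparse and tail terms uniformly over $f \in \cH(\bm{b})$, and the correct identification that the enlarged index set is what converts the $\log^4(m)$ RIP factor into $\log^5(m)$ via $g(m)=\log(m)$. Apart from minor imprecisions (the index set grows superalgebraically, not ``subalgebraically,'' in $m$, and the intermediate RIP sample-complexity statement omits the $\log^2(\E n)$ dependence that you later account for), this matches the paper's Steps 1--3.
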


As with Theorem \ref{t:main-res-unknown}, the term $\log^{5}(m)$ could be replaced by $\log^4(m) g(m)$ for any function $g$ satisfying the conditions detailed in Remark \ref{rem:log-term}.

Theorems \ref{t:prob-bound-known} and \ref{t:prob-bound-unknown} are of independent interest, but they are also crucial in obtaining the `in expectation' bounds, Theorems \ref{t:main-res-known} and \ref{t:main-res-unknown}. Therefore, the remainder of the paper involves first establishing Theorems \ref{t:prob-bound-known} and \ref{t:prob-bound-unknown} and then using them to prove Theorems \ref{t:main-res-known} and \ref{t:main-res-unknown}.

\section{Jacobi polynomial expansions and best polynomial approximation}\label{S:Jacobi_pol}

The proofs of Theorems \ref{t:prob-bound-known} and \ref{t:prob-bound-unknown} involve various best approximation error bounds for holomorphic functions using Jacobi polynomials. Therefore, in this section we first introduce Jacobi polynomials and Jacobi polynomial expansions, then provide such bounds. Specifically, we consider the classical \textit{best $s$-term approximation} and the so-called \textit{weighted best $k$-term approximation} and \textit{best $s$-term approximation in anchored sets}.

\subsection{Jacobi polynomials}\label{sec:jacobi-polys}

For $\alpha,\beta > - 1$, the Jacobi polynomials are the univariate orthogonal polynomials with respect to the corresponding Jacobi measure \ef{jacobi-meas}. The classical polynomials are denoted by $P^{\alpha,\beta}_{\nu}$ for $\nu \in \bbN_0$ and have the normalization $P^{\alpha,\beta}_{\nu}(1) = {\nu + \alpha \choose \nu}$. Note that $\alpha = \beta = 0$ yields the Legendre polynomials, and $\alpha = \beta = -1/2$ yields the Chebyshev polynomials (of the first kind). See, e.g., \cite[Chpt.\ 4]{szego1975orthogonal} for an overview. In this work, we consider the corresponding orthonormal polynomials with respect to the probability measure \ef{1d-jacobi-meas}, which we denote by $\Psi^{\alpha,\beta}_{\nu}$, $\nu \in \bbN_0$. These polynomials are rescaled versions of the polynomials $P^{\alpha,\beta}_{\nu}$. See \ef{Jacobi-ON} for the precise scaling.

We construct the tensor-product Jacobi polynomials on $\cU$ via tensorization. For a multi-index $\bnu = (\nu_i)_{i \in \bbN} \in \bbN_0^{\bbN}$, we write $\supp(\bnu) = \{ i: \nu_i \neq 0\} \subseteq \bbN_0$ for its support and $\nm{\bm{\nu}}_0 = |\supp(\bnu)|$ for its number of nonzero entries. We denote the set of multi-indices with finitely-many nonzero entries as
\begin{equation}\label{def:cF}
\cF = \{ \bm{\nu} \in \bbN^{\bbN}_0 : \nm{\bm{\nu}}_0 < \infty \} \subset \bbN^{\bbN}_0.
\end{equation}
Now consider sequences $\bm{\alpha}, \bm{\beta} > - \bm{1}$ and a multi-index $\bm{\nu} \in \cF$. We construct the tensorized Jacobi polynomial corresponding to $\bnu$ as follows:
\begin{equation}\label{eq:basis}
\Psi_{\bm{\nu}}^{\bm{\alpha}, \bm{\beta}}(\bm{y}) =
\prod_{i \in \supp (\bm{\nu})}
\Psi_{\nu_i}^{\alpha_i, \beta_i} (y_i), \quad \bm{y} = (y_i)_{i \in \bbN}\in \cU.
\end{equation}
The set $\{  \Psi^{\bm{\alpha},\bm{\beta}}_{\bm{\nu}}\}_{\bm{\nu} \in \cF}$ forms an orthonormal basis of $L^2_{\varrho}(\cU)$, where $\varrho = \varrho_{\bm{\alpha},\bm{\beta}}$ is the measure \ef{tensor-jacobi-measure}.

\rem{
\label{rem:Jac-cond-equiv}
As we show in Proposition \ref{prop:equiv-ass}, our main assumption, Assumption \ref{main-ass}, is equivalent to the property
\begin{equation*}
\nm{\Psi^{\alpha_j,\beta_j}_{\nu_j}}_{L^{\infty}([-1,1])} \leq p(\nu_j),\quad \forall \nu_j \in \bbN_0, j \in \bbN,
\end{equation*}
for some polynomial $p$ that is independent of $j$ and depends on $\bm{\alpha}$ and $\bm{\beta}$ only. See \cite[Rmk.\ 3.5]{bachmayr2017sparse1} for a similar assumption in a related context. In other words, Assumption \ref{main-ass} ensures that the $L^{\infty}$-norms of the constituent Jacobi polynomials do not blow up too fast. This is a key ingredient in our proofs, which involve showing various weighted best approximation error bounds for Jacobi polynomial expansions of $(\bm{b},1)$-holomorphic functions (see Theorems \ref{t:weighted-lp-error}  and \ref{t:weighted-lp-error-monotone}), where the weights are based on these norms (see \ef{u-def}).
}

\subsection{Jacobi polynomial expansions}

Consider the tensor Jacobi polynomials $\{\Psi^{\bm{\alpha},\bm{\beta}}_{\bm{\nu}} \}_{\bm{\nu} \in \cF}$ defined in \S\ref{sec:jacobi-polys}. In the following, to simplify notation, we write $\Psi_{\bm{\nu}}$ instead of $\Psi^{\bm{\alpha},\bm{\beta}}_{\bm{\nu}} $ when there is no ambiguity in doing so. 
Let $f \in L^2_{\varrho}(\cU ; \cV)$. Then $f$ has the convergent (in $L^2_{\varrho}(\cU ; \cV)$) expansion 
\begin{equation}
\label{f-exp}
f = \sum_{\bm{\nu} \in \cF} c_{\bm{\nu}}  \Psi_{\bm{\nu}},
\end{equation}
 where
\begin{equation}
\label{f-coeff}
c_{\bm{\nu}} = c_{\bm{\nu}}^{\bm{\alpha},\bm{\beta}} = \int_{\cU} f(\bm{y})\Psi^{\bm{\alpha},\bm{\beta}}_{\bm{\nu}} (\bm{y}) \D \varrho_{\bm{\alpha},\bm{\beta}}(\bm{y}) \in \cV
\end{equation}
are the coefficients of $f$, which are elements of $\cV$. We write $\bm{c} = \bm{c}^{\bm{\alpha},\bm{\beta}} = (c^{\bm{\alpha},\bm{\beta}}_{\bm{\nu}})_{\bm{\nu} \in \cF}$
for the sequence of coefficients. Next, given an index set $S \subseteq  \cF$, we define the truncated expansion $f_S$ of $f$ as
 \begin{equation}\label{def:truncatedF}
 f_S = \sum_{\bm{\nu} \in S} c_{\bm{\nu}}  \Psi_{\bm{\nu}}.
 \end{equation}

\subsection{$\ell^p$ spaces, norms and best approximation of sequences}\label{S:ellspaces}
We now require some further notation and setup, which is based on \cite{adcock2022sparse}. Let $d \in \bbN \cup \{ \infty \}$ and $\Lambda \subseteq \bbN^{d}_{0}$ be a (multi-)index set, $\bm{c} = (c_{\bnu})_{\bnu \in \cF}$ be a vector or a sequence with $\cV$-valued entries and 
\begin{equation*}
\mathrm{supp}(\bm{c}) = \{ \bnu \in \Lambda : \|c_{\bnu}\|_{\cV} \neq 0 \}
\end{equation*}
be the support of $\bm{c}$. As before, given a subset $S \subseteq \Lambda$, we write $\bm{c}_{S}$ for the vector with $\bm{\nu}$th entry equal to $c_{\bnu}$ if $\bnu \in S$ and zero otherwise.
 
For $0 < p \leq \infty$, we write $\ell^p(\Lambda ; \cV)$ for the space of $\cV$-valued sequences $\bm{c} = (c_{\bm{\nu}})_{\bm{\nu} \in \Lambda}$ indexed over $\Lambda$ for which $\nm{\bm{c}}_{p;\cV} < \infty$, where
\eas{
\nm{\bm{c}}_{p;\cV} = \begin{cases} \left ( \sum_{\bm{\nu} \in \Lambda} \nm{c_{\bm{\nu}}}^p_{\cV} \right )^{\frac1p} & 0 < p < \infty ,
\\
\sup_{\bm{\nu} \in \Lambda} \nm{c_{\bm{\nu}}}_{\cV} & p = \infty. 
\end{cases}
}
When $\cV = \bbR$, we just write $\ell^p(\Lambda)$ and $\nms{\cdot}_p$. Note that $\nms{\cdot}_{p;\cV}$ defines a norm when $1 \leq p \leq \infty$ and quasi-norm when $0<p<1$. Now let $\bm{c} \in \ell^p(\Lambda ; \cV)$ and $s \in \bbN$ with $1 \leq s \leq | \Lambda |$. We define the $\ell^p$-norm \textit{best $s$-term approximation error} of $\bm{c}$  as
\begin{equation}\label{sigma-s-def}
\sigma_s(\bm{c})_{p;\cV} = \inf \left \{ \nm{\bm{c} - \bm{z}}_{p;\cV} : \bm{z} \in \ell^p(\Lambda ; \cV),\ | \mathrm{supp}(\bm{z}) | \leq s \right \}.
\end{equation}
Next, let $\bm{w} = (w_{\bm{\nu}})_{\bm{\nu} {\in \Lambda}} > \bm{0}$ be a vector of positive weights. For $1 \leq p \leq 2$, we define the weighted $\ell^p_{\bm{w}}({\Lambda}; \cV)$ {space} as the space of $\cV$-valued sequences $\bm{c} = (c_{\bm{\nu}})_{\bm{\nu} \in {\Lambda}}$ for which $\nm{\bm{c}}_{p,\bm{w};\cV} < \infty$, where 
\bes{
\nm{\bm{c}}_{p,\bm{w};\cV} =  \left ( \sum_{\bm{\nu} \in {\Lambda}} w^{2-p}_{\bm{\nu}} \nm{c_{\bm{\nu}}}^p_{\cV} \right )^{\frac1p} ,\quad 1 \leq p \leq 2 .
}
Notice that $\nms{\cdot}_{p,\bm{w} ; \cV}$ coincides with the unweighted norm $\nms{\cdot}_{p;\cV}$ for any $\bm{w}$ when $p=2$, or for any $p$ when $\bm{w}=\bm{1}$.  Next, we define the weighted cardinality of an index set  $S \subseteq {\Lambda}$  as $| S |_{\bm{w}} = \sum_{\bm{\nu} \in S} w^2_{\bm{\nu}}$ and, for $0 < k \leq |\Lambda|_{\bm{w}}$, we define the  $\ell^p_{\bm{w}}$-norm \textit{weighted best $(k,\bm{w})$-term
 approximation error} of a $\cV$-valued vector or sequence $\bm{c} \in \ell^p_{\bm{w}}(\Lambda ; \cV)$ as
\begin{equation}
\label{weighted-k-w-term}
\sigma_{{k}}(\bm{c})_{p,\bm{w};\cV} = \inf \left \{ \nm{\bm{c} - \bm{z}}_{p,\bm{w};\cV} : \bm{z} \in \ell^p_{\bm{w}}(\Lambda ; \cV),\ | \mathrm{supp}(\bm{z}) |_{\bm{w}} \leq {k} \right \}.
\end{equation}
We also need several concepts involving anchored sets. A set $\Lambda \subseteq \bbN^d_0$, where $d \in \bbN \cup \{ \infty \}$, of (multi-)indices is \textit{lower} if, whenever $\bm{\nu} \in \Lambda$  and $\bm{\mu} \leq \bm{\nu} $, it also holds that $\bm{\mu} \in \Lambda$. Moreover, $\Lambda$ is \textit{anchored} if it is lower and if, whenever $\bm{e}_j \in \Lambda$ for some $j \in [d]$, it also holds that $\{\bm{e}_1,\bm{e}_2, \ldots, \bm{e}_{j} \}\subseteq \Lambda$.
A scalar-valued sequence $\bm{d} = (d_{\bm{\nu}})_{\bm{\nu} \in \Lambda}$ is \textit{monotonically nonincreasing} if $d_{\bm{\nu}} \geq d_{\bm{\mu}}$ whenever 
$\bm{\nu} \leq \bm{\mu}$. It is \textit{anchored} if it is monotonically nonincreasing and $d_{\bm{e}_j} \leq d_{\bm{e}_i}$ whenever $i,j \in [d]$ with $i \leq j$. Now let $\bm{c} \in \ell^{\infty}({\Lambda};\cV)$. An \textit{anchored majorant} of $\bm{c}$ is any scalar-valued sequence $\bm{d} = (d_{\bm{\nu}})_{\bm{\nu} \in \Lambda}$ that is anchored and satisfies $d_{\bm{\nu}} \geq \nm{c_{\bm{\nu}}}_{\cV}$, $\forall \bnu \in \Lambda$. The \textit{minimal anchored majorant} of $\bm{c}$ is the (unique) scalar-valued sequence $\tilde{\bm{c}}$ that is an anchored majorant and for which $\tilde{\bm{c}} \leq \bm{d}$ for any other anchored majorant $\bm{d}$. It has the explicit expression
\begin{equation}\label{def:min_anch}
\tilde{c}_{\bm{\nu}} = 
\begin{cases}
\sup \{ \|c_{\bm{\mu}}\|_{\cV}: \bm{\mu} \geq \bm{\nu} \} & \text{ if }  \bm{\nu} \neq \bm{e}_j \text{ for any } j \in[d],  \\ 
\sup \{ \|c_{\bm{\mu}}\|_{\cV}: \bm{\mu} \geq \bm{e}_i \text{ for some } i \geq j \} & \text{ if } \bm{\nu} =\bm{e}_j \text{ for some } j \in [d].
\end{cases}
\end{equation}
Given $0 < p \leq \infty$, we define the \textit{anchored $\ell^p$ space} $\ell^p_{\mathsf{A}}(\Lambda;\cV)$ as the space of sequences $\bm{c} \in \ell^{\infty}(\Lambda;\cV)$ for which $\tilde{\bm{c}} \in \ell^p(\Lambda)$, and define the (quasi-)norm $\|\bm{c}\|_{p,\mathsf{A};\cV} = \|\tilde{\bm{c}}\|_{p}$.
Finally, for $1 \leq s \leq |\Lambda|$ we also define the $\ell^p$-norm \textit{best $s$-term approximation error in anchored sets} by
\begin{equation}\label{def:best_anch}
\sigma_{s,\mathsf{A}}(\bm{c})_{p;\cV} = \inf \left \{ \nm{\bm{c} - \bm{z}}_{p;\cV} : \bm{z} \in \ell^p(\Lambda ; \cV),\ | \mathrm{supp}(\bm{z}) | \leq {s}, \, \mathrm{supp}(\bm{z}) \text{ anchored}\right \}.
\end{equation}

\subsection{Best approximation of polynomial coefficients}

We now present two key results on the best approximation of sequences of polynomial coefficients for $(\bm{b},1)$-holomorphic functions. In these results, we consider the tensor Jacobi polynomial basis \ef{eq:basis} and the so-called \textit{intrinsic} weights
\begin{equation}
\label{u-def}
\bm{u} = (u_{\bm{\nu}})_{\bm{\nu} \in \cF},\quad \text{where }u_{\bm{\nu}} = \nm{\Psi_{\bm{\nu}}}_{L^{\infty}(\cU)}
\end{equation}
and $\Psi_{\bm{\nu}} = \Psi^{\bm{\alpha},\bm{\beta}}_{\bm{\nu}}$ is as in \eqref{eq:basis} for all $\bnu \in \cF$. Notice that $\bm{u} \geq \bm{1}$, since the $\Psi_{\bm{\nu}}$ are orthonormal functions and $\varrho$ is a probability measure. 

In our first result, we give bounds for the weighted best $(k,\bm{u})$-term approximation error \ef{weighted-k-w-term}.

\begin{theorem}
\label{t:weighted-lp-error}
Let  $\bm{\alpha},\bm{\beta}$ satisfy Assumption \ref{main-ass} with constant $\tau > 0$ and $\varrho = \varrho_{\bm{\alpha},\bm{\beta}}$ be the tensor-product Jacobi probability measure \ef{tensor-jacobi-measure}. Let $0 < p < 1$, $p < q \leq 2$,  $\bm{b} \in \ell^p(\bbN)$ with $\bm{b} \geq \bm{0}$, $k > 0$,  and consider the weights $\bm{u}$ as in \ef{u-def}. Then the following hold.  
\begin{itemize}
\item[(a)] There exists a set $S \subset \cF$ with $|S|_{\bm{u}} \leq k$ depending on $k$, $\bm{b}$, $\bm{\alpha}$ and $\bm{\beta}$ only such that 
\begin{equation}\label{eq:thm41}
\sigma_k(\bm{c})_{q,\bm{u} ; \cV} \leq \nm{\bm{c} - \bm{c}_S}_{q,\bm{u} ; \cV} \leq C(\bm{b},p,\tau) \cdot k^{\frac1q-\frac1p}, \quad \forall f \in \cH(\bm{b}),
\end{equation}
  where $\bm{c} = (c_{\bm{\nu}})_{\bm{\nu} \in \cF}$ are the coefficients \ef{f-coeff} of $f$ with respect to the tensor Jacobi polynomials \ef{eq:basis}.
Moreover, the sets can be chosen to be nested. If $k' \geq k$ then there are sets $S \subseteq S' \subset \cF$ with $|S|_{\bm{u}} \leq k$ and $|S'|_{\bm{u}} \leq k'$ such that \ef{eq:thm41} holds for $S$ with $k$ and $S'$ with $k'$.
  \item[(b)] For any $r \in (p,1)$ the constant in \ef{eq:thm41} satisfies
\begin{equation*}
  \sup_{\nm{\bm{b}}_{p,\mathsf{M}} \leq 1}  C(\bm{b},r,\tau)  \leq \widetilde{C}(p,r,\tau).
  \end{equation*}
\end{itemize}
 \end{theorem}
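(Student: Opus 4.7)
The plan is to reduce the theorem to two ingredients: a weighted Stechkin-type inequality, and a quantitative bound on the weighted $\ell^p_{\bm{u}}$-norm of the coefficient sequence $\bm{c}$. The first ingredient is the standard assertion that whenever $\bm{c} \in \ell^p_{\bm{u}}(\cF;\cV)$, there is a set $S \subset \cF$, chosen greedily, with $|S|_{\bm{u}} \leq k$ and
\[
\nm{\bm{c} - \bm{c}_S}_{q,\bm{u};\cV} \leq C(p,q) \cdot k^{1/q - 1/p} \nm{\bm{c}}_{p,\bm{u};\cV}.
\]
The task therefore reduces to establishing the uniform bound $\nm{\bm{c}}_{p,\bm{u};\cV} \leq C(\bm{b},p,\tau)$ valid for all $f \in \cH(\bm{b})$.

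For this bound I would combine two standard tools. First, since $f$ is $(\bm{b},1)$-holomorphic, $f$ is bounded by $1$ on each Bernstein polyellipse $\cE_{\bm{\rho}}$ with $\bm{\rho}$ admissible, i.e., $\sum_{i}((\rho_i+\rho_i^{-1})/2-1)b_i \leq 1$. Adapting the classical contour-integration/decay argument from \cite{cohen2015approximation,adcock2022sparse} to the Jacobi setting yields an estimate of the form $\nm{c_{\bnu}}_{\cV} \leq C(\bm{\alpha},\bm{\beta}) \prod_{i \in \supp(\bnu)} \rho_i^{-\nu_i}$, with at most polynomial correction in $\bnu$. Second, Assumption \ref{main-ass} together with Remark \ref{rem:Jac-cond-equiv} furnishes a polynomial bound $u_{\bnu} \leq p_\tau(\bnu)$, uniform in the coordinate index $i$ and depending on $\tau$ only. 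Substituting these into $\nm{\bm{c}}_{p,\bm{u};\cV}^p = \sum_{\bnu \in \cF} u_{\bnu}^{2-p}\nm{c_{\bnu}}_{\cV}^p$ and choosing $\bm{\rho} = \bm{\rho}(\bnu)$ adaptively (as in the majorant arguments of \cite{cohen2015approximation,adcock2022sparse}) converts the sum into something controlled by $\sum_i b_i^p$. Crucially, the polynomial $p_\tau(\bnu)$ is absorbed into the exponential decay at the cost of a constant depending only on $p,\tau$, so the final constant depends on $\bm{b}$ only through $\nm{\bm{b}}_p$.

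The nested sets claim in (a) follows from the greedy construction: order the multi-indices by the appropriate quantity for weighted Stechkin (e.g., $u_{\bnu}^{(2-p)/p}\nm{c_{\bnu}}_{\cV}$ in decreasing order) and include them until the weighted cardinality first exceeds $k$. Since the ordering does not depend on $k$, taking $k' \geq k$ gives $S \subseteq S'$ automatically. For part (b), I follow the idea used in \cite{adcock2024optimal}: since the constant in (a) depends on $\bm{b}$ only through $\nm{\bm{b}}_r$ (when applied with exponent $r$ in place of $p$), and since $\nm{\bm{b}}_{p,\mathsf{M}} \leq 1$ implies $\nm{\bm{b}}_r \leq C(p,r)$ for every $r > p$ via the decay $\tilde{b}_i \lesssim i^{-1/p}$, the uniform bound $\sup_{\nm{\bm{b}}_{p,\mathsf{M}} \leq 1} C(\bm{b},r,\tau) \leq \widetilde{C}(p,r,\tau)$ follows immediately.

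The main technical obstacle is carrying out the coefficient-decay step in the general Jacobi setting rather than the specialized Chebyshev or Legendre ones, where the $L^\infty$-norms $u_{\bnu}$ are known in closed form. Here one must use only the polynomial bound from Assumption \ref{main-ass}, and then track carefully how the polynomial factor interacts with the majorant argument so that (i) the dependence on $\bm{\alpha},\bm{\beta}$ is bundled into a single $\tau$-dependence, and (ii) the dependence on $\bm{b}$ is solely through $\nm{\bm{b}}_p$. This careful bookkeeping is essential for part (b) to go through unchanged.
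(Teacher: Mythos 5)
Your part (a) strategy is essentially the paper's: bound the Jacobi coefficients by a holomorphy-based majorant, prove weighted $\ell^p_{\bm{u}}$-summability via an adaptive choice of $\bm{\rho}(\bnu)$, and finish with weighted Stechkin. One repair is needed even there: the theorem requires a single set $S$ depending on $k,\bm{b},\bm{\alpha},\bm{\beta}$ only, valid for \emph{all} $f \in \cH(\bm{b})$, whereas your greedy selection orders multi-indices by a quantity involving $\nm{c_{\bnu}}_{\cV}$, so your $S$ depends on $f$. The fix (which is what the paper does) is to run the Stechkin construction on the $f$-independent majorant $d_{\bnu}$ (the infimum of the coefficient bound over admissible $\bm{\rho}$), ordering by $d_{\bnu}/u_{\bnu}$, and then transfer to $\bm{c}$ via $\nm{c_{\bnu}}_{\cV} \leq d_{\bnu}$; nestedness then also comes for free. (Also note the coefficient bound carries a factor $(\rho_k-1)^{-2}$ that blows up as $\rho_k \to 1^+$; it is harmless only because the adaptively chosen $\rho_k$ stay bounded away from $1$ on $\supp(\bnu)$, which your sketch glosses over.)

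The genuine gap is part (b). Your argument hinges on the claim that the constant in (a) depends on $\bm{b}$ only through $\nm{\bm{b}}_p$ (respectively $\nm{\bm{b}}_r$), and the majorant argument you invoke does not deliver this. In that argument one must split $\bbN = [d] \cup \{j > d\}$ and choose $d$ so that the tail $\sum_{j>d} b_j$ is small enough for the abstract summability criterion ($\ell^1$-norm of the induced sequence strictly less than one); both $d$ and the resulting constant therefore depend on \emph{where the mass of $\bm{b}$ sits}, not just on its norm — e.g.\ $\bm{b} = \bm{e}_N$ has $\nm{\bm{b}}_p = 1$ for every $N$, yet $\sum_{j>d} b_j = 1$ for all $d < N$, so no norm-only bound on $d$ exists. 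Indeed, if the constant were a function of $\nm{\bm{b}}_p$ alone, it would be bounded uniformly over $\{\nm{\bm{b}}_p \leq 1\}$, which the paper explicitly states cannot be achieved with these techniques (this is precisely why the second parts of the main theorems trade the rate $1/2-1/p$ for $1/2-1/t$). The correct mechanism for (b) exploits monotonicity rather than the norm: $\nm{\bm{b}}_{p,\mathsf{M}} \leq 1$ forces the minimal monotone majorant to satisfy $\tilde{b}_j \leq j^{-1/p}$, which allows a $\bm{b}$-independent choice of the splitting dimension $d = d(p,r,\ldots)$ and domination of the entire summability estimate by a fixed sequence built from $(j+d)^{-1/p}$; that dominating sequence is only $\ell^r$-summable for $r > p$, which is exactly why the uniform bound is stated for $r \in (p,1)$ and not for $p$ itself. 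Without this (or some genuinely new idea), your part (b) does not go through.
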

In the second result, we provide similar bounds for the best $s$-term approximation error in anchored sets \ef{def:best_anch}. Moreover, we show that the upper bound also holds in the stronger weighted $\ell^q_{\bm{u}}(\cF ; \cV)$-norm -- a property that will be needed later in the proofs of the main results.

\begin{theorem}
\label{t:weighted-lp-error-monotone}
Let  $\bm{\alpha},\bm{\beta}$ satisfy Assumption \ref{main-ass} with constant $\tau > 0$ and $\varrho = \varrho_{\bm{\alpha},\bm{\beta}}$ be the tensor-product Jacobi probability measure \ef{tensor-jacobi-measure}. Let $0 < p < 1$, $\bm{b} \in \ell^p_{\mathsf{M}}(\bbN)$ with $\bm{b} \geq \bm{0}$, $s \in \bbN$, $p \leq q \leq 2$ and consider the weights $\bm{u}$ as in \ef{u-def}. Then the following hold. 
\begin{itemize}
\item[(a)] There exists an anchored set $S \subset \cF$ with $|S| \leq s$ depending on $s$, $\bm{b}$, $\bm{\alpha}$, $\bm{\beta}$ and $q$ only such that
\begin{equation}\label{eq:glencoe}
\sigma_{s,\mathsf{A}}(\bm{c})_{q;\cV}  \leq \nm{\bm{c} - \bm{c}_S}_{q,\bm{u};\cV} \leq C(\bm{b},p,{q},\tau) \cdot s^{\frac1q-\frac1p},  \quad \forall f \in \cH(\bm{b}),
\end{equation}
where $\bm{c} = (c_{\bm{\nu}})_{\bm{\nu} \in \cF}$ are the coefficients \ef{f-coeff} of $f$ with respect to the tensor Jacobi polynomials \ef{eq:basis}.
Moreover, the sets can be chosen to be nested. If $s' \in \bbN$, $s' \geq s$, then there are sets $S \subseteq S' \subset \cF$ with $|S| \leq s$ and $|S'| \leq s'$ such that \ef{eq:glencoe} holds for $S$ with $s$ and $S'$ with $s'$.
\item[(b)] For any $r \in (p,1)$ the constant in \ef{eq:glencoe} satisfies
\begin{equation}
  \sup_{\nm{\bm{b}}_{p,\mathsf{M}} \leq 1}  C(\bm{b},r,{q},\tau) \leq \widetilde{C}(p,r,q,\tau).
  \end{equation}
\end{itemize}
\end{theorem}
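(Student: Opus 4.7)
The plan is to reduce the claim to a weighted Stechkin-type estimate applied to a suitable anchored majorant of the normalized coefficient sequence. The starting point, as in Theorem \ref{t:weighted-lp-error}, is a pointwise bound for the Jacobi coefficients of $f \in \cH(\bm{b})$. Representing $c_{\bnu}$ as a Cauchy-type integral over the Bernstein polyellipse $\cE_{\bm{\rho}}$ and invoking Assumption \ref{main-ass} through Remark \ref{rem:Jac-cond-equiv} to control the $L^\infty$-norms of the orthonormal Jacobi polynomials by $u_{\bnu}$, I would obtain an estimate of the form $\|c_{\bnu}\|_{\cV} \leq C(\tau)\, u_{\bnu}\, \bm{\rho}^{-\bnu}$ valid for every admissible $\bm{\rho} \geq \bm{1}$ satisfying $\sum_{i} ((\rho_i + \rho_i^{-1})/2 - 1) b_i \leq 1$. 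Equivalently, the normalized sequence $a_{\bnu} := u_{\bnu}^{(2-q)/q}\,\|c_{\bnu}\|_{\cV}$, whose $\ell^q$-norm equals the weighted $\ell^q_{\bm{u}}$-norm of $(c_{\bnu})$, obeys $a_{\bnu} \leq C(\tau)\, u_{\bnu}^{2/q}\, \bm{\rho}^{-\bnu}$.

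Next, I would construct an anchored majorant $\bm{d} = (d_{\bnu})_{\bnu \in \cF}$ of $(a_{\bnu})$. Here the assumption $\bm{b} \in \ell^p_{\mathsf{M}}(\bbN)$ is essential: working with the monotone majorant $\tilde{\bm{b}}$ of $\bm{b}$, I would select an admissible Bernstein sequence $(\rho_i)_i$ that is nondecreasing in $i$ and tuned so that the upper bound $u_{\bnu}^{2/q}\bm{\rho}^{-\bnu}$ already respects a monotone structure in $\bnu$. Applying the formula \eqref{def:min_anch} to this pointwise bound then produces a dominating sequence $\bm{d}$ that is anchored. The crucial quantitative step is to verify $\bm{d} \in \ell^p(\cF)$ with $\|\bm{d}\|_p \leq C(\bm{b},p,q,\tau)$, which reduces to estimating product sums of the form $\sum_{\bnu} u_{\bnu}^{2p/q} \bm{\rho}^{-p\bnu}$ together with their anchored corrections on the axes, using the polynomial $u_{\bnu}$-growth allowed by Assumption \ref{main-ass} and geometric-sum arguments analogous to those in the proof of Theorem \ref{t:weighted-lp-error}.

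The estimate \eqref{eq:glencoe} then follows from a Stechkin-type lemma. Since $\bm{d}$ is anchored, its superlevel sets $\{\bnu : d_{\bnu} > t\}$ are themselves anchored, so the indices of the $s$ largest entries of $\bm{d}$ can be chosen to form an anchored set $S \subset \cF$ of cardinality at most $s$. The standard Stechkin argument then yields $(\sum_{\bnu \notin S} d_{\bnu}^q)^{1/q} \leq s^{1/q-1/p}\|\bm{d}\|_p$, and the weighted $\ell^q_{\bm{u}}$-bound on $\bm{c} - \bm{c}_S$ follows from $a_{\bnu} \leq d_{\bnu}$. Nestedness is obtained by totally ordering $\cF$ according to decreasing $d_{\bnu}$ with an anchored-preserving tie-breaking rule and taking initial segments, producing a chain $S_1 \subseteq S_2 \subseteq \cdots$ with $|S_s| \leq s$ that works simultaneously for all cardinalities.

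Finally, for part (b), I would follow the argument from \cite{adcock2024optimal}: rerunning the construction with $r \in (p,1)$ in place of $p$, the constant $C(\bm{b},r,q,\tau)$ depends on $\bm{b}$ only through $\|\bm{b}\|_{r,\mathsf{M}}$ and similar quantities, and the inclusion $\ell^p_{\mathsf{M}}(\bbN) \subset \ell^r_{\mathsf{M}}(\bbN)$ combined with $\|\bm{b}\|_{r,\mathsf{M}} \leq \|\bm{b}\|_{p,\mathsf{M}} \leq 1$ yields the desired uniform bound $\widetilde{C}(p,r,q,\tau)$. I expect the main obstacle to be the second step, namely constructing a single anchored majorant that simultaneously achieves the sharp rate $s^{1/q-1/p}$, is nested in $s$, and admits the uniformization in (b); all three hinge on a careful derivation of $(\rho_i)_i$ from $\tilde{\bm{b}}$ and on tracking the interaction between the intrinsic weights $u_{\bnu}$ and the anchored structure, particularly at the coordinate axes where \eqref{def:min_anch} introduces the additional coupling across the unit vectors $\bm{e}_j$.
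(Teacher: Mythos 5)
Your overall architecture (coefficient bound on polyellipses $\to$ anchored majorant of the weighted coefficient sequence $\to$ Stechkin in anchored sets, with nestedness from initial segments of the sorted majorant and part (b) by rerunning at level $r$) matches the paper's proof, which bounds $\nm{\bm{c}-\bm{c}_S}_{q,\bm{u};\cV}$ by $\nm{\bar{\bm{d}}-\bar{\bm{d}}_S}_q$ with $\bar d_{\bm{\nu}} = d_{\bm{\nu}} u_{\bm{\nu}}^{2/q-1}$, shows $\bar{\bm{d}}\in\ell^p_{\mathsf{A}}(\cF)$ via Lemma \ref{l:abstract-summability}(b), and then applies Stechkin's inequality to the minimal anchored majorant. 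However, your crucial quantitative step fails as described. You propose to pick a \emph{single} admissible, nondecreasing Bernstein sequence $(\rho_i)_i$ and then verify $\ell^p$-summability of the majorant by estimating $\sum_{\bm{\nu}} u_{\bm{\nu}}^{2p/q}\bm{\rho}^{-p\bm{\nu}}$ with geometric sums. This product factorizes as $\prod_j \sum_{\nu\ge 0}(\mathrm{poly}(\nu))^{p}\rho_j^{-p\nu}$, which converges only if $\sum_j \rho_j^{-p}<\infty$, i.e.\ $\rho_j$ grows at least like $j^{1/p}$ up to logarithms. But admissibility, $\sum_j\bigl(\tfrac{\rho_j+\rho_j^{-1}}{2}-1\bigr)b_j\le 1$, forces $\sum_j \rho_j b_j\lesssim 1$, which is incompatible with such growth for generic $\bm{b}\in\ell^p_{\mathsf{M}}(\bbN)$ (e.g.\ $b_j\asymp j^{-1/p-\epsilon}$ with small $\epsilon$). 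This is exactly why the paper's bound \eqref{tilde-c-def} takes the \emph{infimum over admissible $\bm{\rho}$}, and why the proof of Lemma \ref{l:abstract-summability}(b) chooses a $\bm{\nu}$-dependent $\tilde{\bm{\rho}}$ as in \eqref{def:trho}, splits $\bbN$ into a finite block $E=[d]$ and a tail $F$ with $d$ determined by $\tilde{\bm{b}}$, and handles the tail through the multinomial sequence $\frac{\nm{\bm{\nu}}_1!}{\bm{\nu}!}\bm{h}^{\bm{\nu}}$ and the abstract summability criterion ($\bm{h}\in\ell^p$, $\nm{\bm{h}}_1<1$), rather than a plain product of geometric series. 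In addition, the anchoredness is not obtained for free from \eqref{def:min_anch}: because the chosen $\bm{\rho}$ varies with $\bm{\nu}$, the paper must construct an explicit majorant $\widetilde B(\bm{\nu})$ (using $\tilde{\bm{b}}$ and a shift parameter $\beta$) and verify monotonicity and the axis condition by hand.

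A secondary inaccuracy concerns part (b). You assert that $C(\bm{b},r,q,\tau)$ depends on $\bm{b}$ only through $\nm{\bm{b}}_{r,\mathsf{M}}$, so that the inclusion $\ell^p_{\mathsf{M}}\subset\ell^r_{\mathsf{M}}$ finishes the proof. If that were true, the constant would be bounded uniformly over the $\ell^r_{\mathsf{M}}$ unit ball, which the paper explicitly states is not achievable with these techniques (this is the whole reason Theorems \ref{t:main-res-known} and \ref{t:main-res-unknown} trade the log factor for the rate $m^{1/2-1/t}$). The actual mechanism in \eqref{eq:boundC_A} uses the strict gap $p<r$: the hypothesis $\nm{\bm{b}}_{p,\mathsf{M}}\le 1$ gives the pointwise envelope $\tilde b_j\le j^{-1/p}$, which yields a $\bm{b}$-independent truncation level $d=d(p,\ldots)$ and a dominating sequence $\bar{\bm{g}}(p,\varepsilon)\in\ell^r(\cF)$; the uniformity comes from this envelope, not from monotonicity of the norms alone. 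So both the summability step and the uniformization step need the $\bm{\nu}$-dependent polyellipse construction and the $E/F$ splitting to be carried out in full.
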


Note that the first inequality in both \ef{eq:thm41} and \ef{eq:glencoe} follows immediately from the definition of the best approximation error term and, in the case of \ef{eq:glencoe}, the fact that the weights $\bm{u} \geq \bm{1}$.

These two theorems are extensions of well-known results in the literature. Early results of this type were developed in  \cite{chkifa2015breaking,cohen2015approximation,cohen2010convergence} for Taylor or Legendre polynomial coefficients of holomorphic functions arising as solution maps of certain parametric PDEs and considering, primarily, the best $s$-term approximation \ef{sigma-s-def}, the best $s$-term approximation in anchored sets \ef{def:best_anch} or a variant of the latter based on lower sets only.
A more direct proof for Chebyshev or Legendre polynomial coefficients of $(\bm{b},\varepsilon)$-holomorphic functions (as opposed to parametric PDE solutions) was presented in \cite[Thms.\ 3.28, 3.33]{adcock2022sparse}, based on the arguments in  \cite{cohen2015approximation}. Analysis of the best $(k,\bm{u})$-term approximation error was first considered in \cite[Lem.\ 7.23]{adcock2022sparse} and later in \cite[Thm.\ A.3]{adcock2024efficient}, again for Chebyshev or Legendre polynomial coefficients only.

Parts (a) of Theorems \ref{t:weighted-lp-error} and \ref{t:weighted-lp-error-monotone} extend these previous works by allowing for arbitrary Jacobi polynomial coefficients. We also make explicit the fact that the sets can be chosen in a nested fashion, as this will be needed in later proofs. The uniform bounds for the constants given in parts (b) of Theorems \ref{t:weighted-lp-error} and \ref{t:weighted-lp-error-monotone} are novel, and were not considered in these previous works. They generalize results first shown in \cite{adcock2024optimal}, which considered Legendre polynomials only.

\subsection{Bounds for Jacobi polynomial coefficients of holomorphic functions and an abstract summability lemma}

The remainder of this section is devoted to the proofs of Theorems \ref{t:weighted-lp-error} and \ref{t:weighted-lp-error-monotone}. They involve two main ingredients. First, a bound for the Jacobi polynomial coefficients of holomorphic functions, and second, an abstract summability lemma for sequences satisfying such a bound. Using these results, to prove Theorem \ref{t:weighted-lp-error} we construct a sequence $\bar{\bm{d}}$ based on the coefficients $\bm{c}$ and weights $\bm{u}$, and show that it is $\ell^p$-summable. For Theorem \ref{t:weighted-lp-error-monotone} we construct a similar sequence $\bar{\bm{d}}$ and show that it is $\ell^p_{\mathsf{A}}$-summable. Finally, the proofs of Theorems \ref{t:weighted-lp-error} and \ref{t:weighted-lp-error-monotone} follow by considering appropriate versions of Stechkin's inequality.

Consider $\bnu \in \cF$, where $\cF$ is as in \eqref{def:cF}. Then there exists $N \in \bbN$  such that $\nu_k=0$ for all $k >N$. Thus, the following lemma is a straightforward extension of \cite[Cor.\ B.2.7]{zech2018sparse}. For this reason, we omit its proof.
\begin{lemma}
\label{l:jacobi-coeff-bd}
Let $\bm{\alpha},\bm{\beta} > - \bm{1}$ and suppose that $f : \cU \rightarrow \cV$ is holomorphic in $\cE_{\bm{\rho}}$ for some $\bm{\rho} \geq \bm{1}$. Then its coefficients \eqref{f-coeff} with respect to the tensor Jacobi polynomials \ef{eq:basis} satisfy
\begin{equation}
\label{jacobi-coeff-bd-rho}
\nm{c_{\bm{\nu}}}_{\cV} \leq \nm{f}_{L^{\infty}(\cE_{\bm{\rho}} ; \cV)} \prod_{k \in I(\bm{\nu},\bm{\rho})} \frac{\rho_k^{-\nu_k+1}}{(\rho_k-1)^2} (\nu_k+1),
\end{equation}
where $I(\bm{\nu},\bm{\rho}) = \mathrm{supp}(\bm{\nu}) \cap \{ k : \rho_k > 1 \}$.
\end{lemma}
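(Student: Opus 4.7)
The plan is to reduce the multivariate claim to a product of one-dimensional Cauchy-type estimates for Jacobi coefficients. Since $\bm{\nu}\in\cF$ has only finitely many nonzero entries, choose $N\in\bbN$ with $\nu_k=0$ for all $k>N$. Because $\Psi^{\alpha_k,\beta_k}_0\equiv 1$ and each $\varrho_{\alpha_k,\beta_k}$ is a probability measure, Fubini lets me integrate out all coordinates $y_k$ with $k>N$; this produces a function of $(y_1,\ldots,y_N)$ that remains holomorphic on $\cE_{\rho_1}\times\cdots\times\cE_{\rho_N}$ with $L^\infty$-norm bounded by $\nm{f}_{L^{\infty}(\cE_{\bm{\rho}};\cV)}$. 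Thus, without loss of generality, $f$ depends on only finitely many variables.

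Next, I would establish the one-dimensional key estimate: if $g:[-1,1]\to\cV$ extends holomorphically to $\cE_{\rho}$ with $\rho>1$, then for every $\nu\in\bbN_0$,
\begin{equation*}
\nm{\int_{-1}^{1} g(y)\,\Psi^{\alpha,\beta}_{\nu}(y)\,\D\varrho_{\alpha,\beta}(y)}_{\cV} \leq \nm{g}_{L^{\infty}(\cE_{\rho};\cV)} \cdot \frac{\rho^{-\nu+1}}{(\rho-1)^2}(\nu+1).
\end{equation*}
To prove this, one deforms the line integral onto $\partial\cE_{\rho}$ via analytic continuation (equivalently, one uses a contour-integral representation of the Jacobi coefficient through a generating-function or Christoffel--Darboux-type kernel), then bounds the integrand using the sharp growth estimate $\nm{\Psi^{\alpha,\beta}_{\nu}}_{L^{\infty}(\cE_{\rho})} \leq C(\alpha,\beta)\,\rho^{\nu}(\nu+1)$ coming from Szeg\H{o}'s asymptotics for Jacobi polynomials on Bernstein ellipses \cite{szego1975orthogonal}. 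The factor $(\rho-1)^{-2}$ arises from the length of $\partial\cE_{\rho}$ and the distance of the contour from $[-1,1]$. The Hilbert-valued version is immediate from the scalar one by linearity of the Bochner integral.

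Finally, I iterate the 1D estimate coordinate by coordinate across $k\in\supp(\bm{\nu})$: fixing all other variables, I apply the one-dimensional bound in the slice $y_k$ and use Fubini to absorb the resulting factor into the remaining integrals. For $k\in\supp(\bm{\nu})$ with $\rho_k=1$ no holomorphic contour is available, but orthonormality and the estimate $\nm{\Psi^{\alpha_k,\beta_k}_{\nu_k}}_{L^{1}(\varrho_{\alpha_k,\beta_k})}\leq 1$ contribute a harmless factor of $1$; hence the product in the final bound runs only over $I(\bm{\nu},\bm{\rho})=\supp(\bm{\nu})\cap\{k:\rho_k>1\}$, giving \eqref{jacobi-coeff-bd-rho}. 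The main (and really only) technical step is the 1D bound above with its precise dependence on $\rho$ near $\rho=1$; anything weaker would break the summability arguments invoked in Theorems \ref{t:weighted-lp-error} and \ref{t:weighted-lp-error-monotone}, so the tracking of the $(\rho-1)^{-2}$ factor is the point that deserves the most care.
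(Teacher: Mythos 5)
Your scaffolding (Fubini reduction to finitely many variables using $\Psi^{\alpha_k,\beta_k}_0\equiv 1$, tensorization over $\supp(\bnu)$, and the harmless factor $1$ for coordinates with $\rho_k=1$ via $\nm{\Psi^{\alpha_k,\beta_k}_{\nu_k}}_{L^1(\varrho_{\alpha_k,\beta_k})}\leq 1$) is fine, and it matches the paper, whose ``proof'' is exactly this finite-support reduction followed by a citation of \cite[Cor.\ B.2.7]{zech2018sparse}. The gap is in the one step you yourself call the crux: the one-dimensional estimate is asserted with a mechanism that cannot work. First, you cannot ``deform'' $\int_{-1}^1 g(y)\Psi^{\alpha,\beta}_\nu(y)(1-y)^{\alpha}(1+y)^{\beta}\,\D y$ onto $\partial\cE_\rho$: the Jacobi weight has branch points at $\pm1$ for non-integer $\alpha,\beta$, and in any case an integral over $[-1,1]$ is not homotopic to one over the closed curve $\partial\cE_\rho$. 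Second, and more fundamentally, bounding the (deformed) integrand by the growth estimate $\nm{\Psi^{\alpha,\beta}_\nu}_{L^\infty(\cE_\rho)}\lesssim \rho^{\nu}(\nu+1)$ goes in the wrong direction: any bound produced this way carries a factor $\rho^{+\nu}$, i.e.\ exponential growth in $\nu$, whereas the entire content of \eqref{jacobi-coeff-bd-rho} is the decay $\rho_k^{-\nu_k}$. That decay comes from cancellation — orthogonality of $\Psi^{\alpha,\beta}_\nu$ to all polynomials of degree $<\nu$ — not from the size of $\Psi^{\alpha,\beta}_\nu$ on the ellipse. The correct route (essentially what the cited result of Zech does) is to apply Cauchy's formula to $g$ and write $c_\nu = \frac{1}{2\pi i}\oint_{\partial\cE_\rho} g(z)\,Q^{\alpha,\beta}_\nu(z)\,\D z$ with the second-kind function $Q^{\alpha,\beta}_\nu(z)=\int_{-1}^1 (z-y)^{-1}\Psi^{\alpha,\beta}_\nu(y)\,\D\varrho_{\alpha,\beta}(y)$, then expand $(z-y)^{-1}$ in Chebyshev polynomials of $y$ so that orthogonality kills the first $\nu$ terms and yields $|Q^{\alpha,\beta}_\nu(z)|\lesssim \rho^{-\nu}\big(|\sqrt{z^2-1}|(1-\rho^{-1})\big)^{-1}$ on $\partial\cE_\rho$; equivalently, expand $g$ in a Chebyshev series with coefficients bounded by $2\nm{g}_{L^\infty(\cE_\rho;\cV)}\rho^{-n}$ and use $\int_{-1}^1 T_n\Psi^{\alpha,\beta}_\nu \,\D\varrho_{\alpha,\beta}=0$ for $n<\nu$ together with $|\int_{-1}^1 T_n\Psi^{\alpha,\beta}_\nu\,\D\varrho_{\alpha,\beta}|\leq 1$. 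Without one of these ingredients your sketch proves nothing of the required form.

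Two secondary points. The uniform growth exponent you quote, $(\nu+1)$, is not correct in general: for orthonormal Jacobi polynomials the $L^\infty([-1,1])$ norm grows like $\nu^{\max\{\alpha,\beta\}+1/2}$, which is why the paper's own Lemma \ref{l:jacobi-uniform-norm} carries an exponent $\gamma(\tau)$ rather than $1$. And your heuristic for the $(\rho-1)^{-2}$ factor (``length of $\partial\cE_\rho$ and distance to $[-1,1]$'') is not right either: the length of $\partial\cE_\rho$ stays bounded as $\rho\to1^{+}$; in a correct argument the blow-up comes from the geometric-series tail $\sum_{n\geq\nu}\rho^{-n}$ and from the behaviour of $|\sqrt{z^2-1}|$ near the foci. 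Indeed, be careful about promising the bound in exactly the stated form for all $\rho>1$: the second-kind-function argument gives $2\rho^{-\nu+1}/(\rho-1)\cdot\nm{g}_{L^\infty(\cE_\rho;\cV)}$, and for large $\rho$ the right-hand side of \eqref{jacobi-coeff-bd-rho} can fall below this (test $f(\bm{y})=y_1$ with Legendre weight and $\rho_1\geq 4$), so the estimate is genuinely a ``$\rho$ near $1$'' bound, which is all that is needed downstream in Lemma \ref{l:abstract-summability}; this makes precise that the careful tracking you defer is not achievable by the route you describe.
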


We now present an abstract summability lemma that asserts $\ell^p$- and $\ell^p_{\mathsf{A}}$-summability for sequences satisfying a bound of the form \ef{jacobi-coeff-bd-rho}.

\lem{
\label{l:abstract-summability}
Let $0 < p < 1$, $\varepsilon > 0$, $\bm{b} = (b_j)_{j \in \bbN} \in \ell^p(\bbN)$, $\bm{b} \geq \bm{0}$ and $\xi : (1,\infty) \rightarrow [0,\infty)$ be continuous and bounded as $t \rightarrow \infty$.
Let $c,\gamma > 0$ be constants and consider a scalar-valued sequence $\bm{d} = (d_{\bm{\nu}})_{\bm{\nu} \in \cF}$ such that $|d_{\bm{0}} | \leq c$ and, for every $\bm{\nu} \in \cF \backslash \{ \bm{0} \}$,
\be{
\label{d-bound-summability}
| d_{\bm{\nu}} | \leq \prod_{k \in \mathrm{supp}(\bnu)} \xi(\rho_k) \rho^{-\nu_k}_k  (c \nu_k+1)^{\gamma},
}
for all $\bm{\rho} = (\rho_j)_{j \in \bbN} \geq \bm{1}$ satisfying $\{ k : \rho_k > 1 \} \supseteq \mathrm{supp}(\bnu)$ and
\be{
\label{rho-b-cond}
\sum^{\infty}_{j=1} \left ( \frac{\rho_j + \rho^{-1}_j}{2} - 1 \right ) b_j \leq \varepsilon.
}
Then the following hold.
\begin{itemize}
\item[a)] The sequence $\bm{d} \in \ell^p(\cF)$ and there exists a constant $C(\bm{b},p,\varepsilon,c,\gamma,\xi)$ such that
\begin{equation}\label{eq:bounddp}
\nm{\bm{d}}_{p} \leq C(\bm{b},p,\varepsilon,c,\gamma,\xi).
\end{equation}
Moreover, for any $r \in (p,1)$ there is a constant $\widetilde{C}(p,r,\varepsilon,c,\gamma,\xi)$ such that
\begin{equation}\label{eq:boundC}
\sup_{\|\bm{b}\|_{p,\mathsf{M}} \leq 1}  C(\bm{b},r,\varepsilon,c,\gamma,\xi) \leq \widetilde{C}(p,r,\varepsilon,c,\gamma,\xi).
\end{equation}
\item[b)] If $\bm{b} \in \ell^p_{\mathsf{M}}(\bbN)$ then $\bm{d} \in \ell^p_{\mathsf{A}}(\cF)$ and   there exists a constant $C(\bm{b},p,\varepsilon,c,\gamma,\xi)$ such that
\begin{equation}\label{eq:bounddpA}
\nm{\bm{d}}_{p,\mathsf{A}} \leq C(\bm{b},p,\varepsilon,c,\gamma,\xi).
\end{equation}
Moreover, for any $r \in (p,1)$ there is a constant $\widetilde{C}(p,r,\varepsilon,c,\gamma,\xi)$ such that
\begin{equation}\label{eq:boundC_A}
\sup_{\|\bm{b}\|_{p,\mathsf{M}} \leq 1}  C(\bm{b},r,\varepsilon,c,\gamma,\xi) \leq \widetilde{C}(p,r,\varepsilon,c,\gamma,\xi).
\end{equation}
\end{itemize}
}

This lemma extends various results in the literature \cite{adcock2022sparse,chkifa2015breaking,cohen2015approximation,cohen2010convergence} in several ways. First, the coefficients $d_{\bm{\nu}}$ do not need to be the (norms of the) orthogonal polynomial coefficients of a $(\bm{b},\varepsilon)$-holomorphic function. They just need to satisfy \ef{d-bound-summability}. In particular, combining this result with Lemma \ref{l:jacobi-coeff-bd} leads to summability results for arbitrary Jacobi polynomial coefficients. Second, this lemma establishes the uniform bounds \ef{eq:boundC} and \ef{eq:boundC_A}, which, as noted, were not considered in past works, with the exception of \cite{adcock2024optimal}, which considered Legendre polynomial coefficients only.

\begin{proof}
We first prove the summability bounds \ef{eq:bounddp} and \ef{eq:bounddpA}. To do so, we adapt the arguments of \cite[Thm.\ 3.28]{adcock2022sparse} and \cite[Thm.\ 3.33]{adcock2022sparse}, respectively, which, as noted, are based on \cite{cohen2015approximation}. Having done this, we show \ef{eq:boundC} and \ef{eq:boundC_A} by extending the arguments of  \cite[Lems.\ A.2 \& A.5]{adcock2024optimal}.

\pbk
\textit{Proof of \ef{eq:bounddp}.}
Write $\bbN = E \cup F$, where $E = [d]$ and $F = \bbN \backslash [d]$ and $d \in \bbN$ is to be chosen later. Define $\kappa = 1 + {\varepsilon}/({2 \nm{\bm{b}}_1})$ and, for $\bm{\nu} \in \cF$, the sequence $\tilde{\bm{\rho}} = \tilde{\bm{\rho}}(\bm{b},\varepsilon,\bm{\nu}) \geq \bm{1}$ by
\begin{equation*}
\frac{\tilde{\rho}_j + \tilde{\rho}^{-1}_j}{2} = \begin{cases} \kappa & j \in \supp(\bm{\nu}_E), \\ \kappa + \frac{\varepsilon \nu_j}{2 b_j \nm{\bm{\nu}_F}_1} & j \in \supp(\bm{\nu}_F), \\
1 & j \notin \supp(\bm{\nu}). \end{cases}
\end{equation*}
Then it is readily checked that $\bm{\rho} = \tilde{\bm{\rho}}$ satisfies \ef{rho-b-cond} and $\{ k : \rho_k > 1 \} =\mathrm{supp}(\bnu)$. Moreover, using the fact that the solution $x = a + \sqrt{a^2-1}$ to the equation $(x+1/x)/2 = a$, where $a \geq 1$, satisfies $x \geq a$, we see that $\tilde{\rho_j} \geq \kappa > 1$, $\forall j \in \mathrm{supp}(\bm{\nu})$.
Since the function $\xi$ is continuous on $(1,\infty)$ and bounded as $t \rightarrow \infty$, there is a constant $C(\kappa,\xi)$ such that
\be{
\label{C-c-tilde-const}
\xi(t) \leq C(\kappa,\xi),\quad \forall t \geq \kappa.
}
Therefore
\bes{
\xi(\tilde{\rho}_j) \leq C(\kappa,\xi) \leq C(\kappa,\xi) \nu_j + 1,\quad \forall j \in \mathrm{supp}(\bm{\nu}).
}
This and \ef{d-bound-summability} now give
\bes{
| d_{\bm{\nu}} | \leq  \prod_{j \in \supp(\bm{\nu})} (C(\kappa,\xi) \nu_j + 1) \tilde{\rho}^{-\nu_j}_j (c \nu_j+1)^{\gamma}, 
\quad {\forall \bnu \in \cF}
}
and therefore
\be{
\label{d-custom-bd}
| d_{\bm{\nu}} | \leq  B(\bm{\nu} ) : = \prod_{j \in \supp(\bm{\nu})} \tilde{\rho}^{-\nu_j}_j (\tilde{c} \nu_j+1)^{\tilde{\gamma}}, 
\quad {\forall \bnu \in \cF,}
}
where $\tilde{c} = \tilde{c}(\kappa,\xi,c) = \max \{ C(\kappa,\xi) , c \}$ and $\tilde{\gamma} = \gamma + 1$.
We now split $B(\bm{\nu}) = B_{E}(\bm{\nu}) B_{F}(\bm{\nu})$, where
\be{
\label{B_S_def}
B_{S}(\bm{\nu}) = \prod_{j \in \supp(\bm{\nu_S})} \tilde{\rho}_{j}^{-\nu_j} (\tilde{c} \nu_j+1)^{\tilde{\gamma}}\quad \text{for }S = E,F.
}
Let $\cF_S = \{ \bm{\nu} \in \cF : \supp(\bm{\nu}) \subseteq S \}$ for $S = E,F$. Since
\be{
\label{d-bound-sigma}
\nm{\bm{d}}^p_p \leq \sum_{\bm{\nu} \in \cF} B(\bm{\nu})^p = \sum_{\bm{\nu} \in \cF_{E}} B_E(\bm{\nu})^p \cdot \sum_{\bm{\nu} \in \cF_{F}} B_F(\bm{\nu})^p = : \Sigma_{E} \cdot \Sigma_F,
}
to prove the result it suffices to show that $\Sigma_E , \Sigma_F < \infty$. Consider $\Sigma_E$. 
By construction, $\tilde{\rho}_j = \tilde{\kappa}$ for $j \in \supp(\bm{\nu}_E)$, where $\tilde{\kappa} = \kappa + \sqrt{\kappa^2-1} > \kappa > 1$.
Therefore
\be{
\label{Sigma-E-bd}
\Sigma_E  = \sum_{\bm{\nu} \in \bbN^d_0} \prod_{j \in \supp(\bm{\nu}) } \left ( \tilde{\kappa}^{-\nu_j} (\tilde{c} \nu_j + 1)^{\tilde{\gamma}}  \right )^p
 = \left ( \sum^{\infty}_{\nu = 0} \left ( \tilde{\kappa}^{-\nu} (\tilde{c} \nu + 1)^{\tilde{\gamma}} \right )^p \right )^d
 < \infty.
}
Now consider $\Sigma_F$. Using \ef{B_S_def} and the definition of $\tilde{\rho}_j$, we have
\be{
\label{B_F_bound}
B_F(\bm{\nu})  \leq \prod_{j \in \supp(\bm{\nu}_F)}  \left ( \frac{2 b_j \nm{\bm{\nu}_F}_1 }{\varepsilon \nu_j} \right )^{\nu_j} (\tilde c \nu_j + 1)^{\tilde \gamma},
}
and therefore, using the estimate $n! \leq n^n \leq n! \E^n$, $\forall n \in \bbN_0$,
\eas{
B_F(\bm{\nu})  \leq   \frac{\nm{\bm{\nu}_F}^{\nm{\bm{\nu}_F}_1}_1}{\bm{\nu}^{\bm{\nu}_F}_{F}} \left ( \frac{2 \bm{b}_F}{\varepsilon} \right )^{\bm{\nu}_F} \prod_{j \in \supp(\bm{\nu}_F)} (\tilde c \nu_j + 1)^{\tilde \gamma} \leq \frac{\nm{\bm{\nu}_F}_1 !}{\bm{\nu}_F !} \left ( \frac{2 \E \bm{b}_F}{\varepsilon} \right )^{\bm{\nu}_F} \prod_{j \in \supp(\bm{\nu}_F)} (\tilde c \nu_j + 1)^{\tilde \gamma}.
}
Now observe that $(\tilde{c} x + 1)^{\tilde\gamma} \leq ((\tilde{c}+1) x)^{\tilde\gamma} \leq  ((\tilde{c}+1) x)^{\tilde\gamma} + 1$ for any $x \geq 1$.
Therefore, we obtain
\be{
\label{Sigma-F-bd}
\Sigma_F \leq \nm{\bm{g}(\bm{b},\varepsilon)}^p_p.
}
Here $\bm{g}(\bm{b},\varepsilon) = ( g({\bm{b}},\varepsilon)_{\bnu})_{\bnu \in \cF}$ is the sequence with $\bm{\nu}$th term
\be{
\label{g-b-eps}
 g({\bm{b}},\varepsilon)_{\bnu} =
  \dfrac{\|\bnu\|_1!}{\bnu!} \bm{h}({\bm{b}},\varepsilon)^{\bnu} 
 \prod_{j \in \bbN}  \left( (\tilde{c} +1)^{\tilde{\gamma}} \nu_j^{\tilde \gamma}+1\right),\quad \text{where}\quad 
 \bm{h} ({\bm{b}},\varepsilon)_j = \dfrac{2\E {{b}}_{j+d}}{\varepsilon},\ \forall j \in \bbN.
}
To complete the proof, we use a so-called \textit{abstract summability condition} (see \cite[Lem.\ 3.29]{adcock2022sparse} or \cite[Lem.\ 3.21]{cohen2015approximation}). This implies that $g(\bm{b},\varepsilon) \in \ell^p(\cF)$, and therefore $\Sigma_F < \infty$, if and only if $\bm{h}(\bm{b},\varepsilon) \in \ell^p(\bbN)$ with $\nm{\bm{h}(\bm{b},\varepsilon)}_{1} < 1$. Since $\bm{b} \in \ell^p(\bbN)$ by assumption, we immediately have that $\bm{h}(\bm{b},\varepsilon) \in \ell^p(\bbN)$. To obtain $\nm{\bm{h}(\bm{b},\varepsilon)}_{1} < 1$ we now choose $d \in \bbN$ sufficiently large so that
\bes{
 \sum_{j > d} b_{j} < \frac{\varepsilon}{2 \E}.
}
This completes the proof of \ef{eq:bounddp}.

\pbk
\textit{Proof of \ef{eq:bounddpA}.} We write $\bbN = E \cup F$, where $E = [d]$ and $F = \bbN \backslash [d]$ once more and $d \in \bbN$ is now large enough so that
\begin{equation}\label{d-first-cond-anchored}
\sum_{j>d} {b}_j \leq \dfrac{\varepsilon}{4\beta},
\end{equation}
for some $\beta$ that will be chosen later. Given $\bnu \in \cF$, we now define $\tilde{\bm{\rho}} = \tilde{\bm{\rho}}(\bm{b} , \beta , \varepsilon, \bm{\nu}) \geq \bm{1}$ by
\begin{equation}\label{def:trho}
\frac{\tilde{\rho}_j + \tilde{\rho}^{-1}_j}{2} = \begin{cases} \kappa & j \in \supp(\bm{\nu}_E), \\
 \kappa +\beta+ \frac{\varepsilon \nu_j}{4 b_j \nm{\bm{\nu}_F}_1} & j \in \supp(\bm{\nu}_F), \\
1 & j \notin \supp(\bm{\nu}), \end{cases}
\end{equation}
where $\kappa=1+\varepsilon / (4 \|\bm{b}\|_1)$. Then $\{ k : \tilde \rho_k > 1 \} \supseteq \mathrm{supp}(\bm{\nu})$ and it is a short argument to show that $\bm{\rho} = \tilde{\bm{\rho}}$ also satisfies \ef{rho-b-cond}. Hence \ef{d-custom-bd} also holds for this choice of $\tilde{\bm{\rho}}$. The remainder of the proof involves constructing an upper bound $\widetilde{B}(\bm{\nu}) \geq B(\bm{\nu})$ for which $(\widetilde{B}(\bm{\nu}))_{\bm{\nu} \in \cF}$ is monotonically nonincreasing, anchored and $\ell^p$-summable. Due to the definition of the $\ell^p_{\mathsf{A}}$-norm (which is defined as the $\ell^p$-norm of the minimal anchored majorant of $\bm{d}$) and the bound \ef{d-custom-bd}, the existence of such a sequence immediately implies \ef{eq:bounddpA}.

We first construct $\widetilde{B}(\bm{\nu})$. We split $B(\bm{\nu}) = B_{E}(\bm{\nu}) B_{F}(\bm{\nu})$ once more, where $B_S(\bm{\nu})$ is as in \ef{B_S_def} with $\tilde{\bm{\rho}}$ as defined by \ef{def:trho}. Now consider $B_E(\bm{\nu})$. Note that there is a constant $D =D(\tilde{c},\tilde \gamma,\tkappa) \geq 1$ such  that
\begin{equation}\label{D-const-def}
 	(\tilde{c} n+1)^{\tilde\gamma}\leq D \left( \dfrac{1+\tkappa}{2}\right)^n,\ \forall n \in \bbN_0.
\end{equation} 	
 Using this and the fact that $\tilde{\rho}_j = \tilde{\kappa} : = \kappa + \sqrt{\kappa^2-1} > 1$ for $j \in E$, we deduce that
\bes{
B_E(\bnu) = \prod_{j \in \supp(\bm{\nu}_E)} \tilde{\rho}^{-\nu_j}_j (\tilde{c} \nu_j+1)^{\tilde\gamma}
\leq
D^d \prod_{j \in \supp(\bm{\nu}_E)} \eta^{\nu_j}  =: \widetilde{B}_E(\bnu),
}
where $\eta = \frac{1+\tkappa}{2\tkappa}<1$. Now consider $B_F(\bm{\nu})$. Since $\tilde{\rho}_j > \beta + \varepsilon \nu_j / (4 b_j \nm{\bm{\nu}_F}_1)$ for all $j \in F$, \ef{B_S_def} implies that
\eas{
B_F(\bnu)
& \leq
\prod_{j \in \supp(\bm{\nu}_F)} (\tilde{c} \nu_j+1)^{\tilde\gamma} \left(\beta+ \frac{\varepsilon \nu_j}{4b_j\|\bnu_F\|_1}\right)^{-\nu_j}
\\
 & \leq \prod_{j \in \supp(\bm{\nu}_F)} (\tilde{c} \nu_j+1)^{\tilde\gamma}\left(\beta+ \frac{\varepsilon \nu_j}{4\tilde{b}_j\|\bnu_F\|_1}\right)^{-\nu_j} =: \widetilde{B}_F(\bnu).
}
Here $\tilde{\bm{b}}=(\tilde{b}_j)_{j \in \bbN} \in \ell^p(\bbN)$ is the minimal monotone majorant of $\bm{b}$, as defined in \S\ref{S:spaces}. Having specified both terms $\widetilde{B}_E(\bm{\nu})$ and $\widetilde{B}_F(\bm{\nu})$, we now formulate the upper bound $\widetilde{B}(\bm{\nu}) = \widetilde{B}_E(\bm{\nu})\widetilde{B}_F(\bm{\nu})$.

We next show that $\widetilde{B}(\bm{\nu})$ is monotonically nonincreasing. Following Step 3 of the proof of \cite[Thm.\ 3.33]{adcock2022sparse}, we do this by showing that this holds for both $\widetilde{B}_E(\bm{\nu})$ and $\widetilde{B}_F(\bm{\nu})$.
First, let $\bm{\mu} \leq \bnu$ and note that {$ \supp(\bm{\bm{\mu}}_E) \subseteq  \supp(\bm{\nu}_E)$}. Then, since $\eta = {(1+\tkappa)}/{(2\tkappa)} <1$, we readily see that $\widetilde{B}_E(\bm{\nu}) \leq \widetilde{B}_E(\bm{\mu})$. Hence $\widetilde{B}_E(\bm{\nu})$ is monotonically nonincreasing. To establish the same for $\widetilde{B}_F(\bnu)$ we show that,  for every $\bnu \in \cF$ and every $i>d$ the inequality $\widetilde{B}_F(\bnu+\bm{e}_i) \leq \widetilde{B}_F(\bnu)$ holds. First, assume that $\nu_i \neq 0$. Using the definition of $\widetilde{B}_F(\bnu)$, we see that
\eas{
\dfrac{\widetilde{B}_F(\bnu+\bm{e}_i)}{\widetilde{B}_F(\bnu)} &= \frac{(\tilde{c} (\nu_i+1) +1)^{\tilde \gamma}}{(\tilde{c}\nu_i+1)^{\tilde \gamma}} \frac{\left (\beta + \frac{\varepsilon (\nu_i+1)}{4 \tilde{b}_i (\nm{\bm{\nu}_F}_1+1)} \right )^{-(\nu_i+1)} }{\left ( \beta + \frac{\varepsilon \nu_i}{4 \tilde{b}_i \nm{\bm{\nu}_F}_1} \right )^{-\nu_i} } \prod_{j \in \mathrm{supp}(\bm{\nu}_F) \backslash \{i\} } \left (  \frac{\beta + \frac{\varepsilon \nu_j}{4 \tilde{b}_j (\nm{\bm{\nu}_F}_1+1)}}{ \beta + \frac{\varepsilon \nu_j}{4 \tilde{b}_j \nm{\bm{\nu}_F}_1} }  \right )^{-\nu_j} .
}
Consider the three terms on the right hand side separately. For the first, we observe that
\begin{equation*}
\dfrac{(\tilde{c} (\nu_i+1) +1)^{\tilde\gamma}}{(\tilde{c}\nu_i+1)^{\tilde\gamma}}= \left ( \dfrac{(\nu_i+1)(\tilde{c}+ \frac{1}{(\nu_i+1)})}{\nu_i(\tilde{c}+\frac{1}{\nu_i})} \right )^{\tilde \gamma} \leq 2^{\tilde\gamma}.
\end{equation*}
For the second, we use the inequality $\frac{a+c}{b+c} \geq \frac{a}{b}$ for all $b \geq a > 0$ and $c \geq 0$. Since $i > d$, and therefore $\nu_i \leq \nm{\bm{\nu}_F}_1$, an application of this inequality with $c = 1$ gives
\bes{
\frac{\left (\beta + \frac{\varepsilon (\nu_i+1)}{4 \tilde{b}_i (\nm{\bm{\nu}_F}_1+1)} \right )^{-(\nu_i+1)} }{\left ( \beta + \frac{\varepsilon \nu_i}{4 \tilde{b}_i \nm{\bm{\nu}_F}_1} \right )^{-\nu_i} } \leq \left (\beta + \frac{\varepsilon (\nu_i+1)}{4 \tilde{b}_i (\nm{\bm{\nu}_F}_1+1)} \right )^{-1}  \frac{\left (\beta + \frac{\varepsilon \nu_i}{4 \tilde{b}_i \nm{\bm{\nu}_F}_1} \right )^{-\nu_i} }{\left ( \beta + \frac{\varepsilon \nu_i}{4 \tilde{b}_i \nm{\bm{\nu}_F}_1} \right )^{-\nu_i} } \leq \beta^{-1}.
}
For the third term, we use the same inequality again, except with $c = \beta$. This gives
\bes{
\prod_{j \in \mathrm{supp}(\bm{\nu}_F) \backslash \{i\} } \left (  \frac{\beta + \frac{\varepsilon \nu_j}{4 \tilde{b}_j (\nm{\bm{\nu}_F}_1+1)}}{ \beta + \frac{\varepsilon \nu_j}{4 \tilde{b}_j \nm{\bm{\nu}_F}_1} }  \right )^{-\nu_j} \leq \prod_{j \in \mathrm{supp}(\bm{\nu}_F) \backslash \{i\} } \left( \dfrac{\|\bnu_F\|_1}{\|\bnu_F\|_1+1} \right)^{-\nu_j} \leq \left( \dfrac{\|\bnu_F\|_1}{\|\bnu_F\|_1+1} \right)^{-\|\bnu_F\|_1} \leq \E.
}
Here, in the last step, we used the elementary inequality $(1+1/n)^n \leq \E$, $\forall n \in \bbN$. Combining this with the other two bounds, we deduce that
\bes{
\dfrac{\widetilde{B}_F(\bnu+\bm{e}_i)}{\widetilde{B}_F(\bnu)} \leq 
 \dfrac{2^{\tilde\gamma}\E}{\beta}.
}
Now suppose that $\nu_i=0$. Then we get
\bes{
\dfrac{\widetilde{B}_F(\bnu+\bm{e}_i)}{\widetilde{B}_F(\bnu)} \leq (\tilde{c}+1)^{\tilde\gamma} \left( \beta+ \dfrac{\varepsilon}{4\tilde{b}_i(\|\bnu_F\|_1+1)} \right)^{-1} \left( \dfrac{\|\bnu_F\|_1}{\|\bnu_F\|_1+1} \right)^{-\|\bnu_F\|_1} \leq \dfrac{(\tilde{c}+1)^{\tilde{\gamma}}\E}{\beta}.
}
In order to ensure that $\widetilde{B}_F(\bnu+\bm{e}_i) / \widetilde{B}_F(\bnu)  \leq 1$, in both cases, it is therefore sufficient to choose 
\begin{equation}\label{eq:choice_beta}
\beta \geq (\max \lbrace \tilde{c},1 \rbrace+1)^{\tilde{\gamma}}\E.
\end{equation}
This shows that $\widetilde{B}_F(\bnu)$ is monotonically nonincreasing, and therefore so is $\widetilde{B}(\bm{\nu})$. 

We now show that this sequence  is anchored, i.e., $\widetilde{B}(\bm{e}_j) \leq \widetilde{B}(\bm{e}_i)$ for all $i , j \in \bbN$ with $i \leq j$. We proceed as \cite[Thm.\ 3.33, Step 6]{adcock2022sparse}.  Observe that
\begin{equation*}
\widetilde{B}(\bm{e}_j) = \begin{cases} 
D^d\eta & j \in E, \\
(\tilde{c}+1)^{\tilde\gamma} \left( \beta+ \dfrac{\varepsilon}{4\tilde{b}_j} \right)^{-1} & j \in F.
\end{cases}
\end{equation*}
Let $i , j \in \bbN$ with $i \leq j$. We consider the following three cases separately: $i,j \in E$, $i,j \in F$ and $i \in E$, $j \in F$. For the first case, we have $\widetilde{B}(\bm{e}_j) = \widetilde{B}(\bm{e}_i) =D^d\eta $, as required. Now, consider the second case.  Since $\tilde{\bm{b}}$  is monotonically nonincreasing, we have $\tilde{b}_j \leq \tilde{b}_i$ and therefore $\widetilde{B}(\bm{e}_j)  \leq \widetilde{B}(\bm{e}_i)$, as required. Finally, consider the third case. The condition $\widetilde{B}(\bm{e}_j)  \leq \widetilde{B}(\bm{e}_i)$ for $i \in E$, $j \in F$ is equivalent to
\bes{
 (\tilde{c}+1)^{\tilde{\gamma}} \left ( \beta + \frac{\varepsilon}{4 \tilde{b}_j} \right )^{-1} \leq D^d \eta\quad \Longleftrightarrow \quad \beta + \frac{\varepsilon}{4 \tilde{b}_j}  \geq \frac{(\tilde{c}+1)^{\tilde\gamma}}{ D^d \eta} .
}
Since $D \geq 1$ by definition, this is guaranteed by choosing $\beta \geq (\tilde{c}+1)^{\tilde{\gamma}} /  \eta$. However, $1/\eta = 2 \tilde{\kappa} / (1+\tilde{\kappa}) \leq 2$. So this is implied by \ef{eq:choice_beta}.
With any such $\beta$, we conclude that $\widetilde{B}(\bnu)$ is both monotonically decreasing and anchored.

We are now, finally, ready to establish \ef{eq:bounddpA}. We first write
\be{
\label{d-bound-sigma-A}
\nm{\bm{d}}^p_{p,\mathsf{A}} \leq \sum_{\bm{\nu} \in \cF} \widetilde{B}(\bm{\nu})^p = \sum_{\bm{\nu} \in \cF_{E}} \widetilde{B}_E(\bm{\nu})^p \cdot \sum_{\bm{\nu} \in \cF_{F}} \widetilde{B}_F(\bm{\nu})^p = : \widetilde{\Sigma}_{E} \cdot \widetilde{\Sigma}_F.
}
Using the definition of $\widetilde{B}_{E}(\bnu)$, we see that
\be{
\label{tilde-Sigma-E-bd}
\widetilde{\Sigma}_E = D^{dp} \left ( \sum^{\infty}_{n=0} \eta^{p n} \right )^d < \infty,
}
since $\eta = \frac{1+\tilde\kappa}{2\tilde\kappa} < 1$ by definition. For $\widetilde{\Sigma}_F$, we first notice that $\widetilde{B}_F(\bnu)$ also satisfies \ef{B_F_bound}, except with $2 b_j$ replaced by $4 \tilde{b}_j$. Hence, we may argue exactly as in the corresponding part of the proof of \ef{eq:bounddp} to deduce that
\be{
\label{tilde-Sigma-F-bd}
\widetilde{\Sigma}_F \leq \nm{\tilde{\bm{g}}(\bm{b},\varepsilon)}^p_p,
}
where $\tilde{\bm{g}}(\bm{b},\varepsilon) = ( \tilde{g}({\bm{b}},\varepsilon)_{\bnu})_{\bnu \in \cF}$ is the sequence with $\bm{\nu}$th term
\begin{equation}\label{tilde-g-b-eps}
\tilde{g}({\bm{b}},\varepsilon)_{\bnu} = \dfrac{\|\bnu\|_1!}{\bnu!} \tilde{\bm{h}}({\bm{b}},\varepsilon)^{\bnu} 
 \prod_{j \in \bbN}  \left((\tilde{c}+1)^{\tilde\gamma} \nu_j^{\tilde{\gamma}}+1\right),\quad \text{for}\quad 
\tilde{\bm{h}} ({\bm{b}},\varepsilon)_j = \dfrac{4\E {\tilde{b}}_{j+d}}{\varepsilon},\ \forall j \in \bbN.
 \end{equation}
We now show that $\Sigma_F < \infty$ exactly as in the proof of \ef{eq:bounddp}, by choosing $d \in \bbN$ sufficiently large so that
\bes{
\sum_{j > d} \tilde{b}_j < \frac{\varepsilon}{4 \E}
}
holds, in addition to \ef{d-first-cond-anchored}. This completes the proof of \ef{eq:bounddpA}.

\pbk
\textit{Proof of \ef{eq:boundC} and \ef{eq:boundC_A}.} Let $C_1(\bm{b},r,\varepsilon,c,\gamma,\xi)$ and $C_2(\bm{b},r,\varepsilon,c,\gamma,\xi)$ be any constants such that \ef{eq:bounddp} and \ef{eq:bounddpA} hold, respectively, with $p$ replaced by $r$. Suppose that $\bm{b} \in \ell^r_{\mathsf{M}}(\bbN)$. Then part (b) implies that $\bm{d} \in \ell^r_{\mathsf{A}}(\cF)$ and, therefore,
since the $\ell^r_{\mathsf{A}}$-norm dominates the $\ell^r$-norm, we have
\bes{
\nm{\bm{d}}_r \leq \nm{\bm{d}}_{r,\mathsf{A}} \leq C_2(\bm{b},r,\varepsilon,c,\gamma,\xi).
}
Hence we may assume without loss of generality that $C_1(\bm{b},r,\varepsilon,c,\gamma,\xi) \leq C_2(\bm{b},r,\varepsilon,c,\gamma,\xi)$ for all $\bm{b} \in \ell^r_{\mathsf{M}}(\bbN)$. Using this and the fact that any $\bm{b} \in \ell^p_{\mathsf{M}}(\bbN)$ belongs to $\ell^r_{\mathsf{M}}(\bbN)$ for $r > p$, we deduce that 
\bes{
\sup_{\nm{\bm{b}}_{p,\mathsf{M}} \leq 1} C_1(\bm{b},r,\varepsilon,c,\gamma,\xi) \leq \sup_{\nm{\bm{b}}_{p,\mathsf{M}} \leq 1} C_2(\bm{b},r,\varepsilon,c,\gamma,\xi).
}
This implies that \ef{eq:boundC} is a consequence of \ef{eq:boundC_A}, and therefore it remains to show \ef{eq:boundC_A}.

Let $\bm{b} \in \ell^p_{\mathsf{M}}(\bbN)$ with $\|\bm{b}\|_{p,\mathsf{M}}\leq 1$ and $r \in (p,1)$. Due to \ef{d-bound-sigma-A}, \ef{tilde-Sigma-E-bd} and \ef{tilde-Sigma-F-bd}, we see that the constant $C_2(\bm{b},r,\varepsilon,c,\gamma,\xi)$ can be chosen as
\begin{equation}\label{eq:constant_c}
C_2(\bm{b},r,\varepsilon,c,\gamma,\xi) = D^d \left ( \sum^{\infty}_{n=0} \eta^{r n} \right )^{d/r}  \| \tilde{\bm{g}} (\bm{b},\varepsilon)  \|_r,
\end{equation}
where $\eta = \frac{1+\tilde{\kappa}}{2 \tilde{\kappa}} \geq 1$, $D = D(\tilde{c},\tilde{\gamma},\tilde{\kappa})$ is any constant such that \ef{D-const-def} holds, $\tilde{\bm{g}}(\bm{b},\varepsilon)$ is as in \ef{tilde-g-b-eps} and $d \in \bbN$ is sufficiently large so that \ef{d-first-cond-anchored} holds for $\beta$ satisfying \ef{eq:choice_beta}.

Recall that $\tilde{c} = \tilde{c}(\kappa,\xi,c) = \max \{ C(\kappa,\xi) , c \}$, where $C(\kappa,\xi)$ is any constant such that \ef{C-c-tilde-const} holds.
Notice that $\tilde{\kappa} > \kappa = 1+\varepsilon/(4 \nm{\bm{b}}_1) \geq 1 + \varepsilon/4$ since $\nm{\bm{b}}_1 \leq \nm{\bm{b}}_p \leq \nm{\bm{b}}_{p,\mathsf{M}} \leq 1$. Therefore, we have $\tilde{c} \leq \bar{c} = \bar{c}(\varepsilon,\xi,c)$. Recall also that $\tilde\gamma = \gamma+1$ depends on $\gamma$ only. Therefore, we have $D \leq \widetilde{D}$ for some $\widetilde{D} = \widetilde{D}(\varepsilon,c,\gamma,\xi)$. We now also choose $\beta = \tilde{\beta}(\varepsilon,c,\gamma,\xi) = ( \max \{ \bar{c} , 1 \} + 1)^{\tilde{\gamma}} \E $ so that \ef{eq:choice_beta} holds. Finally, observe that
\bes{
\eta = \frac{1+\tilde{\kappa}}{2 \tilde{\kappa}} \leq \frac{1+\frac{1}{1+\varepsilon/4}}{2} = \tilde{\eta} = \tilde{\eta}(\varepsilon) < 1.
}
Substituting this and the other bounds into \ef{eq:constant_c}, we deduce that 
\bes{
C_2(\bm{b},r,\varepsilon,c,\gamma,\xi) \leq (C_3(r,\varepsilon,c,\gamma,\xi))^d \cdot \| \tilde{\bm{g}} (\bm{b},\varepsilon)  \|_r,
}
for any $d$ such that \ef{d-first-cond-anchored} holds with $\beta = \tilde{\beta}$ as defined above.

We now consider the term $\| \tilde{\bm{g}} (\bm{b},\varepsilon)  \|_r$. Recall that $\tilde{\bm{g}} (\bm{b},\varepsilon)$ is defined in \ef{tilde-g-b-eps}, where $\tilde{\bm{b}}$ is the monotone majorant of $\bm{b}$. Since $\bm{b} \in \ell^p_{\mathsf{M}}(\bbN)$ with $\nm{\bm{b}}_{p,\mathsf{M}} \leq 1$ we have $\tilde{b}_j \leq j^{-1/p}$. Therefore, we may bound $\tilde{\bm{g}}(\bm{b},\varepsilon) \leq \bar{\bm{g}}(p,\varepsilon)$, where
\bes{
\bar{\bm{g}}(p,\varepsilon)_{\bm{\nu}} = \dfrac{\|\bnu\|_1!}{\bnu!} \bar{\bm{h}}(p,\varepsilon)^{\bnu} 
 \prod_{j \in \bbN}  \left((\bar{c}+1)^{\tilde\gamma} \nu_j^{\tilde{\gamma}}+1\right),\quad \text{for}\quad 
\bar{\bm{h}} (p,\varepsilon)_j = \dfrac{4\E}{\varepsilon (j+d)^{1/p}},\ \forall j \in \bbN.
}
This implies that 
\be{
\label{eq:constant_c_continued}
C_2(\bm{b},r,\varepsilon,c,\gamma,\xi) \leq (C_3(r,\varepsilon,c,\gamma,\xi))^d \cdot \| \bar{\bm{g}} (p,\varepsilon)  \|_r,
}
for any $d$ such that \ef{d-first-cond-anchored} holds with $\beta = \tilde{\beta}$ as above. We now show that $\bar{\bm{g}}(p,\varepsilon) \in \ell^r(\cF)$, exactly as in the earlier parts of the proof. First, notice that $\bar{\bm{h}}(p,\varepsilon) \in \ell^r(\bbN)$ since $r > p$. Also, $\nm{\bar{\bm{h}}(p,\varepsilon) }_1 < 1$, provided $d$ is sufficiently large so that
\bes{
\sum^{\infty}_{j=1} \frac{1}{(j+d)^{1/p}} < \frac{\varepsilon}{4 \E}.
}
Now pick $d = d(p,\varepsilon,c,\gamma,\xi)$ as the minimal $d \in \bbN$ such that this and \ef{d-first-cond-anchored} hold, the latter with $\beta = \tilde{\beta}$. Then the abstract summability condition (see \cite[Lem.\ 3.29]{adcock2022sparse} or \cite[Lem.\ 3.21]{cohen2015approximation}), implies that $\bar{\bm{g}}(p,\varepsilon) \in \ell^r(\cF)$. Since this sequence depends on $\bar{c} = \bar{c}(\varepsilon,\xi,c)$, $\tilde{\gamma} = \gamma+1$, $p$, $\varepsilon$ and $d = d(p,\varepsilon,c,\gamma,\xi)$ only, we deduce that $\nm{\bar{\bm{g}}(\bm{b},\varepsilon)}_r \leq C_4(p,r,\varepsilon,c,\gamma,\xi)$. Hence, \ef{eq:constant_c_continued} implies that 
\bes{
C_2(\bm{b},r,\varepsilon,c,\gamma,\xi) \leq (C_3(r,\varepsilon,c,\gamma,\xi))^{d(p,\varepsilon,c,\gamma,\xi)} \cdot C_4(p,r,\varepsilon,c,\gamma,\xi) = \widetilde{C}(p,r,\varepsilon,c,\gamma,\xi).
}
Since $\bm{b} \in \ell^p_{\mathsf{M}}(\bbN)$, $\nm{\bm{b}}_{p,\mathsf{M}} \leq 1$, was arbitrary, we deduce that \ef{eq:boundC_A} holds, and therefore \ef{eq:boundC} as well. This completes the proof.
\end{proof}

\subsection{Proofs of Theorems \ref{t:weighted-lp-error} and \ref{t:weighted-lp-error-monotone}}

We now prove Theorems \ref{t:weighted-lp-error} and \ref{t:weighted-lp-error-monotone}. As noted, this involves constructing certain surrogate sequences, then asserting their summability, and finally, applying appropriate variants of Stechkin's inequality.

\begin{proof}
[Proof of \cf{t:weighted-lp-error}]
We first prove part (a). By \cf{l:jacobi-coeff-bd}, the coefficients of any $f \in \cH(\bm{b})$ satisfy
\begin{equation}
\label{tilde-c-def}
\nm{c_{\bm{\nu}}}_{\cV} \leq \inf \left \{ \prod_{k \in I(\bm{\nu},\bm{\rho})} \frac{\rho^{-\nu_k+1}_{{k}}}{(\rho_k-1)^2} (\nu_k+1) : \bm{\rho} = (\rho_k)^{\infty}_{k=1} \geq \bm{1}\text{ satisfies \ef{rho-b-cond} with $\varepsilon = 1$} \right \} = : d_{\bm{\nu}}.
\end{equation}
We now show that $\bm{d} = (d_{\bm{\nu}})_{\bm{\nu} \in \cF}$ is $\ell^p_{\bm{u}}$-summable, {where $\bm{u}=(u_{\bnu})_{\bnu \in \cF} \geq 1$ are the intrinsic weights defined in \eqref{u-def}}. Observe that this is equivalent to showing that the sequence $\bar{\bm{d}} = (\bar{d}_{\bm{\nu}})_{\bm{\nu} \in \cF}$ is $\ell^p$-summable, where
\begin{equation*}
\bar{d}_{\bm{\nu}} = d_{\bm{\nu}} u^{2/p-1}_{\bm{\nu}}, \quad {\forall \bnu \in \cF}.
\end{equation*}
Now  Assumption \ref{main-ass} and \cf{l:jacobi-uniform-norm} imply that
\bes{
u_{\bm{\nu}} = \prod_{j \in \mathrm{supp}(\bm{\nu})} \nm{\Psi^{\alpha_j,\beta_j}_{\nu_j}}_{L^{\infty}([-1,1])} \leq \prod_{j \in \mathrm{supp}(\bnu)} (1 + c \nu_j )^{\gamma},
}
where $c = c(\tau)$ and $\gamma = \gamma(\tau)$. Let $\bm{\nu} \in \cF$, $\bm{\rho} \geq \bm{1}$ with $\{ k : \rho_k > 1 \} \subseteq \mathrm{supp}(\bnu)$ be such that \ef{rho-b-cond} holds with $\varepsilon =1$. Then we have
\begin{equation*}
\bar{d}_{\bm{\nu}} \leq \prod_{k \in \mathrm{supp}(\bnu)} \frac{\rho^{-\nu_k+1}_{{k}}}{(\rho_k-1)^2} (\overline{c} \nu_k+1)^{\overline{\gamma}},
\end{equation*}
where $\bar{c} = \bar{c}(p,\tau)$ and $\bar{\gamma} = \bar{\gamma}(p,\tau)$.
We now apply \cf{l:abstract-summability}, part (a) with $\varepsilon = 1$ and the function $\xi(t) = t / (t-1)^2$. This gives that $\bar{\bm{d}} \in \ell^p(\cF)$ and therefore
\begin{equation*}
\nm{{\bm{d}}}_{p,\bm{u}} = \nm{\bar{\bm{d}}}_{p} \leq C(\bm{b},p,\overline{c},\overline{\gamma}),
\end{equation*}
where $C(\bm{b},p,\overline{c},\overline{\gamma})$ is the constant from \ef{eq:bounddp} with $c$ and $\gamma$ replaced by $\bar{c}$ and $\bar{\gamma}$, respectively, and the above choices of $\varepsilon$ and $\xi$.
Having shown weighted summability, we are now ready to prove \ef{eq:thm41} and the assertion below it that the set $S$ can be chosen in a nested manner. We do this using the weighted Stechkin's inequality (see \cite[Lem. 3.12]{adcock2022sparse}) and, for the latter, by inspecting its proof. The proof of this inequality constructs the set $S$ as follows. Let $\pi : \cF \rightarrow \bbN$ be a bijection that gives a nonincreasing rearrangement of the sequence $(d_{\bm{\nu}} / u_{\bm{\nu}} )_{\bm{\nu} \in \cF}$ in absolute value, i.e., $|d_{\pi(1)}| / u_{\pi(1)} \geq |d_{\pi(2)}| / u_{\pi(2)} \geq \cdots $. Let $k > 0$ be as in the theorem statement, $M \in \bbN \cup \{ \infty \}$ be the maximal integer such that $\sum^{M}_{j=1} u^2_{\pi(j)} \leq k$ and define $S = \{ \pi(1),\ldots,\pi(M) \}$. The remainder of the proof of \cite[Lem. 3.12]{adcock2022sparse} goes on to show that this choice of $S$ yields the bound
\bes{
 \sigma_k( {\bm{d}})_{q,\bm{u}} \leq \nm{\bm{d} - \bm{d}_S}_{q,\bm{u}} \leq \nm{{\bm{d}}}_{p,\bm{u}} k^{\frac1q-\frac1p}.
}
Here $\sigma_k( {\bm{d}})_{q,\bm{u}}$ denotes the weighted best $(k,\bm{u})$-term approximation error in the case $\cV = \bbR$. See \ef{weighted-k-w-term}.
Observe that the set $S$ depends on $k$, the intrinsic weights $\bm{u}$ and the sequence $\bm{d}$ only. The former depend on $\bm{\alpha}$ and $\bm{\beta}$, while the latter depends on $\bm{b}$. Hence $S = S(k,\bm{b},\bm{\alpha},\bm{\beta})$, as required.
Moreover, it is clear from the construction that $S$ can be chosen so that the nestedness property holds. Using this, we deduce that
\bes{
\sigma_k(\bm{c})_{q,\bm{u};\cV} \leq \sigma_k( {\bm{d}})_{q,\bm{u}} \leq C(\bm{b},p,\overline{c},\overline{\gamma}) \cdot k^{\frac1q-\frac1p},
}
for the coefficients $\bm{c}$ of any $f \in \cH(\bm{b})$. Since $\bar{c} = \bar{c}(p,\tau)$ and $\bar{\gamma} = \bar{\gamma}(p,\tau)$, this completes the proof of (a).
Part (b) follows immediately from \ef{eq:boundC} applied to the constant
$C(\bm{b},r,\overline{c},\overline{\gamma})$. That is, for any $r \in (p,1)$, we have
\begin{equation*}
\sup_{\|\bm{b}\|_{p,\mathsf{M}} \leq 1}  C(\bm{b},r,\overline{c},\overline{\gamma}) \leq \widetilde{C}(p,r,\bar{c},\bar{\gamma})
\end{equation*}
for some constant $\widetilde{C}(p,r,\bar{c},\bar{\gamma})$. We now recall once more that $\bar{c} = \bar{c}(p,\tau)$ and $\bar{\gamma} = \bar{\gamma}(p,\tau)$.
\end{proof}

\begin{proof}
[Proof of \cf{t:weighted-lp-error-monotone}]
We once more bound the coefficients $\bm{c}$ of any $f \in \cH(\bm{b})$ using the sequence $\bm{d}$ defined in \ef{tilde-c-def}. Now observe that, for any $S \subseteq \cF$,
\begin{equation*}
\nm{\bm{c} - \bm{c}_S}_{q,\bm{u} ; \cV} = \left ( \sum_{\bm{\nu \notin S}} u^{2-q}_{\bm{\nu}} \nm{c_{\bm{\nu}}}^q_{\cV} \right )^{\frac{1}{q}} \leq \left ( \sum_{\bm{\nu \notin S}} u^{2-q}_{\bm{\nu}} d_{\bm{\nu}}^q \right )^{\frac{1}{q}} =  \nm{\bar{\bm{d}} - \bar{\bm{d}}_S}_{q},
\end{equation*}
where $\bar{\bm{d}}$ is the sequence with terms $\bar{d}_{\bm{\nu}} = d_{\bm{\nu}} u^{2/q-1}_{\bm{\nu}}$. Now, as in the previous proof, we have
\begin{equation*}
\bar{d}_{\bm{\nu}} \leq \prod_{k \in \supp(\bm{\nu})} \frac{\rho^{-\nu_k+1}_k}{(\rho_k-1)^2} (\overline{c} \nu_k+1)^{\overline{\gamma}},
\end{equation*}
for all $\bm{\rho} \geq \bm{1}$ satisfying \ef{rho-b-cond} and with $\{ k : \rho_k > 1 \} \supseteq \mathrm{supp}(\bnu)$, where $\bar{c} = \bar{c}(q,\tau)$ and $\bar{\gamma} = \bar{\gamma}(q,\tau)$.
Using this and \cf{l:abstract-summability}, part (b) with $\xi(t) = t / (t-1)^2$ and $\varepsilon = 1$, we deduce that $\bar{\bm{d}} \in \ell^p_{\mathsf{A}}(\cF)$ with
\begin{equation*}
\nm{\bar{\bm{d}}}_{p,\mathsf{A}} \leq C(\bm{b},p,\overline{c},\overline{\gamma}),
\end{equation*}
where $C(\bm{b},p,\overline{c},\overline{\gamma})$ is the constant from \ef{eq:bounddpA} with $c$ and $\gamma$ replaced by $\bar{c}$ and $\bar{\gamma}$, respectively, and the above choices of $\varepsilon$ and $\xi$.

To complete the proof of part (a), we use Stechkin's inequality in anchored sets \cite[Lem.~3.32]{adcock2022sparse} and its proof. Let $\tilde{\bm{d}}$ be the minimal anchored majorant \ef{def:min_anch} of $\bar{\bm{d}}$, so that $\nm{\tilde{\bm{d}}}_{p} = \nm{\bar{\bm{d}}}_{p,\mathsf{A}} \leq C(\bm{b},p,\overline{c},\overline{\gamma}).$ Given $s \in \bbN$, let $S$ be an index set associated with $s$ largest entries of $\tilde{\bm{d}}$. Since $\tilde{\bm{d}}$ is anchored, $S$ can be chosen to be an anchored set. Then, since $\tilde{\bm{d}}$ is a majorant of $\bar{\bm{d}}$, we have
\bes{
\nm{\bar{\bm{d}} - \bar{\bm{d}}_S}^q_{q} = \sum_{\bm{\nu} \notin S} \nm{\bar{d}_{\bm{\nu}}}^q_{\cV} \leq \sum_{\bm{\nu} \notin S} (\tilde{d}_{\nu})^q = \sigma_s(\tilde{\bm{d}})^q_q \leq \nm{\tilde{\bm{d}}}^q_p \cdot s^{1-q/p} \leq C(\bm{b},p,\bar{c},\bar{\gamma})^q s^{1-q/p},
}
where in the penultimate inequality we applied the standard version of Stechkin's inequality (see, e.g., \cite[Lem.\ 3.5]{adcock2022sparse}). Using this and the fact that $\bm{u} \geq \bm{1}$, we deduce that
\begin{equation*}
\sigma_{s,\mathsf{A}}(\bm{c})_{q;\cV}  \leq
\nm{\bm{c} - \bm{c}_S}_{q;\cV} \leq
\nm{\bm{c} - \bm{c}_S}_{q,\bm{u};\cV} \leq \nm{\bar{\bm{d}} - \bar{\bm{d}}_S}_{q} \leq C(\bm{b},p,\overline{c},\overline{\gamma}) \cdot s^{\frac1q-\frac1p}. 
\end{equation*}
The bound \ef{eq:glencoe} now follows, since $\bar{c} = \bar{c}(q,\tau)$ and $\bar{\gamma} = \bar{\gamma}(q,\tau)$. Observe also that the set $S$ is anchored by construction and depends on $s$ and $\bar{\bm{d}}$. Since $\bar{\bm{d}}$ depends on $\bm{b}$, $\bm{\alpha}$, $\bm{\beta}$ and $q$ only, we deduce that $S$ has the desired form. Finally, the nestedness property follows immediately, due to the construction of $S$ as an anchored set containing $s$ largest entries of $\tilde{\bm{d}}$. This completes the proof of part (a).

As in the previous proof, part (b) now follows immediately for any $r \in (p,1)$ by applying \ef{eq:boundC_A} to the constant $C(\bm{b},r,\bar{c},\bar{\gamma})$ and recalling that $\bar{c} = \bar{c}(q,\tau)$ and $\bar{\gamma} = \bar{\gamma}(q,\tau)$ once more.
\end{proof}

\section{Proofs of Theorems \ref{t:prob-bound-known} and \ref{t:prob-bound-unknown}} \label{S:proofs_prob}

We now prove Theorems \ref{t:prob-bound-known} and \ref{t:prob-bound-unknown}.

\subsection{Reconstruction map: known anisotropy}\label{S:setup}
We commence with Theorem \ref{t:prob-bound-known}. Our first task is to define the reconstruction map $\cT$. Let $\cU=[-1,1]^{\bbN}$, $\cV$ be a Hilbert space,  
$\bm{\alpha},\bm{\beta}$ satisfy Assumption \ref{main-ass} and $\varrho = \varrho_{\bm{\alpha},\bm{\beta}}$ be as in \eqref{tensor-jacobi-measure}. Let $m \geq 3$, {$0<p<1$}, $0 < \epsilon < 1$ and $\bm{b} = (b_i)_{i \in \bbN} \in \ell^p(\bbN)$ with $\bm{b} \geq \bm{0}$ be given, and define
\begin{equation}\label{s-def-LS}
k = \frac{m}{c \log(m/\epsilon)} ,
\end{equation}
where $c \geq 1$ is a constant that will be chosen in the proof.

Now let $\pi : \bbN \rightarrow \bbN$ be the bijection that gives a nonincreasing rearrangement of $\bm{b}$, i.e., $b_{\pi(1)} \geq b_{\pi(2)} \geq \cdots$, and write $\bm{b}_{\pi} = (b_{\pi(i)})_{i \in \bbN}$. Notice that $\bm{b}_{\pi} \in \ell^p_{\mathsf{M}}(\bbN)$ since it is monotonically nonincreasing and $\nm{\bm{b}_{\pi}}_{p,\mathsf{M}} = \nm{\bm{b}}_p$. In what follows, we will also abuse notation and write $\pi : \bbR^{\bbN} \rightarrow \bbR^{\bbN}$ for the action of this permutation on a sequence, i.e., $\pi(\bm{c}) = (c_{\pi(i)})_{i \in \bbN}$ for $\bm{c} = (c_i)_{i \in \bbN} \in \bbR^{\bbN}$.

Let $\bm{\alpha}_{\pi} = \pi(\bm{\alpha})$ and $\bm{\beta}_{\pi} = \pi(\bm{\beta})$ and consider the tensor-product Jacobi polynomials \ef{eq:basis} corresponding to $\bm{\alpha}_{\pi}$ and $\bm{\beta}_{\pi}$ and let the weights $\bm{u}_{\pi}$ be as in \ef{u-def}, also for $\bm{\alpha}_{\pi}$ and $\bm{\beta}_{\pi}$. \cf{t:weighted-lp-error}, part (a) with $q=2$ now implies the existence of an index set $S_{\pi} = S_{\pi}(k,\bm{b}_{\pi},\bm{\alpha}_{\pi},\bm{\beta}_{\pi})$ with $|S_{\pi}| \leq |S_{\pi}|_{\bm{u}_{\pi}} \leq k$  such that 
\be{
\label{g-Spi-bound}
\nm{g - g_{S_{\pi}}}_{L^2_{\varrho_{\pi}(\cU ; \cV)}} \leq C(\bm{b}_{\pi},p,\tau) \cdot k^{\frac12-\frac1p},\quad \forall g \in \cH(\bm{b}_{\pi}),
}
where $g_{S_{\pi}}$ is as in \ef{def:truncatedF} with the Jacobi polynomials corresponding to $\bm{\alpha}_{\pi}$ and $\bm{\beta}_{\pi}$ and $\varrho_{\pi} = \varrho_{\bm{\alpha}_{\pi},\bm{\beta}_{\pi}}$ is as in \ef{tensor-jacobi-measure}. Here, we also used Parseval's identity and the fact that $\nms{\cdot}_{2,\bm{u} ; \cV} = \nms{\cdot}_{2;\cV}$. Following this, we now define the set $S= S(k,\bm{b},\bm{\alpha},\bm{\beta})$ by 
\bes{
S = \pi(S_{\pi}) = \{ \pi(\bm{\nu}) : \bm{\nu} \in S_{\pi} \}.
}
Note that the reason for introducing the permutation $\pi$ is to obtain a bound \ef{g-Spi-bound} depending on the monotonic sequence $\bm{b}_{\pi}$. If we worked directly with $\bm{\alpha}$ and $\bm{\beta}$, it would only be possible to derive a bound for $\overline{\theta_{m}}(p,\mathsf{M})$ in Theorem \ref{t:main-res-known}, as opposed to one for $\overline{\theta_m}(p)$.

Having defined $S$, we next construct the reconstruction map.
Analogously to \cite[Eqn.\ (5.17)]{adcock2025near}, given any sample points $\{\bm{y}_i \}^{m}_{i=1} \subset \cU$ we define the matrix $\bm{A}$   by 
\begin{equation}\label{eq:def_A}
\bm{A} = \left(  \dfrac{\Psi^{\bm{\alpha},\bm{\beta}}_{\bnu}(\bm{y_i})}{\sqrt{m}}\right)_{i \in [m], \, \bnu \in S} \in \bbR^{m \times |S|}
\end{equation}
and consider the map $\cT : (\cU  \times \cV)^m \rightarrow L^2_{\varrho}(\cU ; \cV)$
defined by 
\begin{equation}\label{eq:min_LS}
\cT ( \{ (\bm{y}_i , f_i) \}^{m}_{i=1}  ) = \sum_{\bm{\nu} \in S} \hat{c}_{\bm{\nu}} \Psi^{\bm{\alpha},\bm{\beta}}_{\bm{\nu}},\qquad \text{where }\hat{\bm{c}}_{S} = (\hat{c}_{\bm{\nu}})_{\bm{\nu} \in S} \in \argmin{\bm{z} \in \cV^{{|S|}}}  \nm{\bm{A} \bm{z} - m^{-1/2} \bm{f}}_{2;\cV},
\end{equation}
for any $\bm{f} = (f_i)^{m}_{i=1} \in \cV^m$. Note that  $\cT$ can be made well defined by picking the unique minimizer with smallest $\ell^2({S} ; \cV)$-norm. 
Observe that $\cT$ depends on $\bm{b}$, $\bm{\alpha}$, $\bm{\beta}$ and $\epsilon$ only.

\subsection{Proof of Theorem \ref{t:prob-bound-known} }

The map $\cT$ is similar to that used in
\cite[Thm.\ 4.4]{adcock2025near}. However, there are some key differences. First, we now consider arbitrary Jacobi polynomials, which may also have different parameters $\alpha_i$, $\beta_i$ in each variable. The latter necessitates the use of the permutation $\pi$ in the construction of $\cT$ and a more involved argument in the proof below. Conversely, \cite[Thm.\ 4.4]{adcock2025near} only considers tensor-product Chebyshev or Legendre polynomials. Second, in Theorem \ref{t:prob-bound-known} we establish a stronger uniform guarantee that holds simultaneously for all functions $f \in \cH(\bm{b})$, whereas \cite[Thm.\ 4.4]{adcock2025near} is nonuniform and therefore weaker. Its proof consequently requires a different approach to that of \cite[Thm.\ 4.4]{adcock2025near}. This is based on ideas found in \cite{krieg2021function,krieg2021functionII}, and involves expanding the error in a sequence of nested index sets. See Remark \ref{rem:uniform-vs-nonuniform-known} for some further discussion on this point.

\begin{proof}[Proof of Theorem \ref{t:prob-bound-known}]
Consider $\cT$ as defined above and sample points $\bm{y}_1,\ldots,\bm{y}_m \sim_{\mathrm{i.i.d.}} \varrho$. We divide the proof into several steps.

\pbk
\textit{Step 1: Construction of nested index sets.}
We start with the construction of a specific nested collection of index sets.  
Let $k_0 =k$, $S_0 = S$ be as defined above and set $k_j=2^j k$ for $j= 1, 2,\ldots $. We once more apply \cf{t:weighted-lp-error} with parameters $\bm{\alpha}_{\pi}$ and $\bm{\beta}_{\pi}$ and weights $\bm{u}_{\pi}$. This implies that there is a sequence of nested index sets 
\bes{
S_{0,\pi} \subseteq S_{1,\pi} \subseteq S_{2,\pi} \subseteq \cdots 
}
satisfying $|S_{j,\pi}|_{\bm{u}_{\pi}} \leq k_j$ and
\be{
\label{Sjpi-errors}
\nm{g - g_{S_{j,\pi}}}_{L^2_{\varrho_{\pi}}(\cU ; \cV)} \leq C(\bm{b}_{\pi},p,\tau)  \cdot (k_j)^{1/2-1/p},\quad \forall g \in \cH(\bm{b}_{\pi})
}
and all $j = 1, 2, \ldots $, where $g_{S_{j,\pi}}$ is as in \ef{def:truncatedF} with $f = g$ and the Jacobi polynomials corresponding to $\bm{\alpha}_{\pi}$ and $\bm{\beta}_{\pi}$, and $\varrho_{\pi} = \varrho_{\bm{\alpha}_{\pi},\bm{\beta}_{\pi}}$ is as in \ef{tensor-jacobi-measure}. Given this, we define the new nested index sets
\bes{
S_0 \subseteq S_1 \subseteq S_{2} \subseteq \cdots 
}
by $S_j = \pi(S_{j,\pi})$, $j = 1,2,\ldots$. Let $\bm{u}$ be as in \ef{u-def} for parameters $\bm{\alpha}$ and $\bm{\beta}$. Observe that
\be{
\label{tensor-jacobi-permute}
\Psi^{\bm{\alpha},\bm{\beta}}_{\pi(\bm{\nu})}(\bm{y}) = \Psi^{\bm{\alpha}_{\pi} , \bm{\beta}_{\pi} }_{\bm{\nu}}(\pi(\bm{y})),\quad \forall \bm{\nu} \in \cF,\ \bm{y} \in \cU.
}
Therefore
\bes{
|S_j|_{\bm{u}} = \sum_{\bm{\nu} \in S_j} u^2_{\bm{\nu}} = \sum_{\bm{\nu} \in S_{j,\pi}} u^2_{\pi(\bm{\nu})} = \sum_{\bm{\nu} \in S_{j,\pi}} \nm{\Psi^{\bm{\alpha},\bm{\beta}}_{\pi(\bm{\nu})}}^2_{L^{\infty}(\cU)} = \sum_{\bm{\nu} \in S_{j,\pi}}\nm{\Psi^{\bm{\alpha}_{\pi},\bm{\beta}_{\pi}}_{\bm{\nu}}}^2_{L^{\infty}(\cU)} = |S_{j,\pi} |^2_{\bm{u}_{\pi}}.
}
We deduce that $|S_j|_{\bm{u}} \leq k_j$ for all $j = 0,1,2,\ldots$.

\pbk
\textit{Step 2: Estimating the truncation errors.}
Now let $f \in \cH(\bm{b})$ be arbitrary. Consider the region $\cR(\bm{b},\varepsilon)$ defined by \ef{def:b-eps-holo}. Since $\pi(\cE_{\bm{\rho}}) = \cE_{\pi(\bm{\rho})}$ for any Bernstein polyellipse and $\sum^{\infty}_{i=1} ((\rho_{\pi(i)} + \rho^{-1}_{\pi(i)})/2-1) b_{\pi(i)} = \sum^{\infty}_{i=1} ((\rho_i + \rho^{-1}_i)/2 - 1) b_i$, we have that $\pi(\cR(\bm{b},\varepsilon)) = \cR(\bm{b}_{\pi},\varepsilon)$. Therefore
\be{
\label{Hb-permute}
f \in \cH(\bm{b}) \quad \Longleftrightarrow \quad f \circ \pi \in \cH(\bm{b}_{\pi}). 
}
Write $g = f \circ \pi \in \cH(\bm{b}_{\pi})$.
We now relate the truncated Jacobi polynomial expansions of $f$ and $g$ corresponding to the  parameters $\bm{\alpha},\bm{\beta}$ and $\bm{\alpha}_{\pi},\bm{\beta}_{\pi}$, respectively. Observe that
\bes{
f_{S_j} = \sum_{\bm{\nu} \in S_j} c^{\bm{\alpha},\bm{\beta}}_{\bm{\nu}} \Psi^{\bm{\alpha},\bm{\beta}}_{\bm{\nu}} = \sum_{\bm{\nu} \in S_{j,\pi}} c^{\bm{\alpha},\bm{\beta}}_{\pi(\bm{\nu})} \Psi^{\bm{\alpha},\bm{\beta}}_{\pi(\bm{\nu})}, 
}
where the coefficients $c^{\bm{\alpha},\bm{\beta}}_{\bm{\nu}}$ are as in \ef{f-coeff}. This and \ef{tensor-jacobi-permute} give that
\bes{
c^{\bm{\alpha},\bm{\beta}}_{\pi(\bm{\nu})} = \int_{\cU} f(\bm{y}) \Psi^{\bm{\alpha},\bm{\beta}}_{\pi(\bm{\nu})}(\bm{y}) \D \varrho_{\bm{\alpha},\bm{\beta}}(\bm{y}) = \int_{\cU}f(\pi(\bm{y})) \Psi^{\bm{\alpha}_{\pi},\bm{\beta}_{\pi}}_{\bm{\nu}}(\bm{y}) \D \varrho_{\bm{\alpha}_{\pi} , \bm{\alpha}_{\pi}}(\bm{y}) = d^{\bm{\alpha}_{\pi},\bm{\beta}_{\pi}}_{\bm{\nu}},
}
where $d^{\bm{\alpha}_{\pi},\bm{\beta}_{\pi}}_{\bm{\nu}}$ are the coefficients of $g$ with respect to the Jacobi polynomials with indices $\bm{\alpha}_{\pi}$, $\bm{\beta}_{\pi}$. This gives
\bes{
f_{S_j}(\bm{y}) = \sum_{\bm{\nu} \in S_{j,\pi}} d^{\bm{\alpha}_{\pi},\bm{\beta}_{\pi}}_{\bm{\nu}} \Psi^{\bm{\alpha}_{\pi},\bm{\beta}_{\pi}}_{\bm{\nu}}(\pi(\bm{y})) = g_{S_j,\pi}(\pi(\bm{y})).
}
Hence, by \ef{Sjpi-errors}, we get
\be{
\label{Sj-errors}
\nm{f - f_{S_j}}_{L^2_{\varrho}(\cU ; \cV)} = \nm{g - g_{S_j,\pi}}_{L^2_{\varrho_{\pi}}(\cU ; \cV)} \leq C(\bm{b}_{\pi},p,\tau) \cdot (k_j)^{1/2-1/p}.
}

\pbk
\textit{Step 3: Error bound for the reconstruction map.} Consider the reconstruction  $\hat{f} = \cT (  \{ (\bm{y}_i , f(\bm{y}_i)) \}^{m}_{i=1} )$ of $f$. Note that, upon squaring the objective function, \eqref{eq:min_LS} is equivalent to an algebraic least-squares problem.  Hence, basic analysis of least-squares problems (see, e.g., the proof of Theorem 5.3 in \cite{adcock2022sparse}) implies that
\be{
\label{LS-error-main}
\nm{f - \hat{f}}_{L^2_{\varrho}(\cU ; \cV)} \leq \nm{f - f_S}_{L^2_{\varrho}(\cU ; \cV)} + {\tau}^{-1} \nm{f - f_S}_{\mathsf{disc}},
}
where ${\tau} = \sigma_{\min}(\bm{A})$ is the minimum singular value of $\bm{A}$, for $\bm{A}$  as in  \eqref{eq:def_A},   and $\nms{\cdot}_{\mathsf{disc}}$ is the discrete seminorm given by $\|g\|_{\mathsf{disc}}^2 = m^{-1} \sum_{i=1}^m \|g(\bm{y}_i)\|_{\cV}^2$ for any $g$ that is finite at the sample points $\bm{y}_i$.
We now expand the term $\nm{f - f_S}_{\mathsf{disc}}$ using the previously defined sets, along with the triangle inequality to get
\begin{equation}
\label{error-split-1-LS}
\nm{f - \hat{f}}_{L^2_{\varrho}(\cU ; \cV)} \leq \nm{f - f_S}_{L^2_{\varrho}(\cU ; \cV)} + {\tau}^{-1} \sum^{\infty}_{j=1} \nm{f_{S_j \backslash S_{j-1}}}_{\mathsf{disc}} ,
\end{equation}
where, for $j = 1,2,\ldots$, $f_{S_j \backslash S_{j-1}}$ is the truncated expansion of $f$ corresponding to the index set $S_j \backslash S_{j-1}$ and with respect to Jacobi polynomials with indices $\bm{\alpha},\bm{\beta}$.
Following \cite{krieg2021functionII}, we next define the matrices
\begin{equation*}
\bm{A}^{(j)} = \left ( \frac{1}{\sqrt{m}} \Psi_{\bm{\nu}}(\bm{y}_i) \right )_{i \in [m],\ \bm{\nu} \in S_j \backslash S_{j-1}}, \quad j=1,2,\ldots . 
\end{equation*}
Hence
\begin{equation*}
\nm{f_{S_j \backslash S_{j-1}}}_{\mathsf{disc}} = \nm{\bm{A}^{(j)} \bm{c}_{S_j \backslash S_{j-1}}}_{2;\cV} \leq {\gamma}_j \nm{\bm{c}_{S_j \backslash S_{j-1}}}_{2;\cV} ,\quad j=1,2,\ldots , 
\end{equation*}
where $\gamma_j=\sigma_{\max}(\bm{A}^{(j)})$ is the maximum singular value of $\bm{A}^{(j)}$ for $j=1,2\ldots$ and $\bm{c} = \bm{c}^{\bm{\alpha},\bm{\beta}}$ are the coefficients of $f$ as in \ef{f-coeff}. By Parseval's identity and the construction of the sets, we get
\begin{equation*}
\nm{f_{S_j \backslash S_{j-1}}}_{\mathsf{disc}} \leq {\gamma}_j \nm{f_{S_j \backslash S_{j-1}}}_{L^2_{\varrho}(\cU ; \cV)} \leq {\gamma}_j  \nm{f - f_{S_{j-1}}}_{L^2_{\varrho}(\cU ; \cV)}, \quad \quad j=1,2,\ldots ,
\end{equation*}
and therefore
\begin{equation*}
\nm{f - \hat{f}}_{L^2_{\varrho}(\cU ; \cV)} \leq \nm{f - f_S}_{L^2_{\varrho}(\cU ; \cV)} + {\tau}^{-1} \sum^{\infty}_{j=1} {\gamma}_j \nm{f - f_{S_{j-1}}}_{L^2_{\varrho}(\cU ; \cV)}.
\end{equation*}
Combining this with \ef{Sj-errors} and using the fact that $k_j = 2^j k$, we deduce that
\begin{equation}
\label{error-split-2-LS}
\nm{f - \hat{f}}_{L^2_{\varrho}(\cU ; \cV)} \leq C(\bm{b}_{\pi},p,\tau)  \cdot k^{\frac12-\frac1p} \cdot \left ( 1 + {\tau}^{-1} \sum^{\infty}_{j=1} {\gamma}_j (2^j)^{\frac12-\frac1p} \right ).
\end{equation}

\pbk
\textit{Step 4: Estimating the constants ${\tau}$ and ${\gamma}_j$, $j = 1,2,\ldots$.} The next step is to estimate these constants. We do this via the matrix Chernoff bound \cite[Thm.~1.1]{tropp2012user-friendly}.

Note that $\bm{A}$, as defined in \eqref{eq:def_A}, is a random matrix with $\bbE(\bm{A}^* \bm{A}) = \bm{I}$. We may write $\bm{A}^* \bm{A}$ as a sum of i.i.d.\ self-adjoint, nonnegative definite random matrices
\begin{equation*}
\bm{A}^*\bm{A} = \sum^{m}_{i=1} \bm{X}_i,\qquad \text{where }\bm{X}_i = \left ( \frac{1}{m} {\overline{\Psi_{\bm{\nu}}(\bm{y}_i)}} \Psi_{\bm{\nu}'}(\bm{y}_i) \right )_{\bm{\nu} , \bm{\nu}' \in S }, \quad {i=1,\ldots m}.
\end{equation*}
Observe that, for $\bm{z} = (z_{\bm{\nu}})_{\bm{\nu} \in S} \in \bbC^s$, the Cauchy--Schwarz inequality and definition \ef{u-def} of the weights $\bm{u}$ gives
\begin{equation*}
\bm{z}^* \bm{X}_i \bm{z} = \frac1m \left | \sum_{\bm{\nu} \in S} z_{\bm{\nu}} \Psi_{\bm{\nu}}(\bm{y}_i) \right |^2 \leq \frac{|S|_{\bm{u}}}{m} \nm{\bm{z}}^2_{2}
\end{equation*}
almost surely.
Therefore, $\lambda_{\max}(\bm{X}_i) \leq |S|_{\bm{u}} /m \leq k / m$ almost surely. An application of the matrix Chernoff bound now gives that
\begin{equation}
\label{alpha-event-prob}
\bbP ( {\tau} \leq 1/2 ) \leq k \exp \left ( - 0.4 m / k \right ).
\end{equation}
Next, we consider the matrices $\bm{A}^{(j)}$, $j = 1,2,\ldots$. By the same arguments, except with the bound $k/m$ replaced by $k_j / m$, we deduce that
\begin{equation*}
\bbP({\gamma}_j \geq \sqrt{1+\delta} ) \leq k_j \exp \left ( - \frac{(1+\delta)\log(1+\delta) - \delta}{k_j / m} \right ), \quad j=1,2,\ldots,
\end{equation*}
for all $\delta \geq 0$.
Observe that $(1+\delta)\log(1+\delta) - \delta \geq \delta \log(\delta) / 2$ for all $\delta \geq 1$. Therefore
\begin{equation}
\label{beta-event-prob}
\bbP({\gamma}_j \geq \sqrt{1+\delta} ) \leq k_j \exp \left ( - \frac{\delta \log(\delta) m}{2 k_j} \right ),\quad \forall \delta \geq 1, \ j = 1,2,\ldots.
\end{equation}

\pbk
\textit{Step 5: Parameter choices and final arguments.} We now define
\begin{equation*}
\delta_j = 2^j,\quad \epsilon_j  = \epsilon / 2^{j+1},\quad j = 1,2,\ldots,
\end{equation*}
and consider the event
\begin{equation*}
E = E_0 \cap E_1 \cap E_2 \cap \cdots,
\end{equation*}
where $E_0 = \{ \tau > 1/2 \}$ and $E_j = \{ \gamma_j <  \sqrt{1+\delta_j} \}$ for $j = 1,2,\ldots$. 
We first show that $\bbP(E) \geq 1-\epsilon$. Using \ef{alpha-event-prob} we see that, choosing $c\geq 5/2$ in \ef{s-def-LS},
\begin{equation*}
\bbP(E^c_0) \leq k \exp \left ( - 0.4 c \log(m/\epsilon) \right ) \leq k \epsilon / m \leq \epsilon / 2.
\end{equation*}
In the last inequality we used the fact that $m \geq 3$, so that $\log(m/\epsilon) \geq 1$ and therefore $k \leq m/2$. Next, using \ef{beta-event-prob} with $\delta = \delta_j$ and recalling that $k_j = 2^j k$, we see that
\begin{equation*}
\bbP(E^c_j)  \leq 2^j k \exp \left ( - \frac{j 2^{j} m \log(2)}{2^{j+1} k} \right ) \leq 2^j m (m/\epsilon)^{-c j\log(\sqrt{2})} \leq \epsilon / 2^{j+1}, \quad j=1,2,\ldots
\end{equation*}
for $c \geq 12$.
Here we also used the facts that $k \leq m$ and $m \geq 3$. Combining this with the bound for $\bbP(E^c_0)$ and the union bound, we deduce that
\begin{equation*}
\bbP(E^c) \leq \bbP(E^c_0) + \bbP(E^c_1) + \bbP(E^c_2) + \cdots  \leq \epsilon,
\end{equation*}
as required. Applying \ef{error-split-2-LS}, we deduce that
\begin{equation*}
\nm{f - \hat{f}}_{L^2_{\varrho}(\cU ; \cV)} \leq C(\bm{b}_{\pi},p,\tau)  \cdot k^{\frac12-\frac1p}  \cdot  \left ( 1 + 2 \sum^{\infty}_{j=1} \sqrt{1+2^j} (2^j)^{\frac12-\frac1p}  \right ) ,
\end{equation*}
with probability at least $1-\epsilon$. Since the sum converges (recall that $p < 1$) and the event $E$ is independent of $f \in \cH(\bm{b})$, it follows that
\begin{equation*}
\nm{f - \hat{f}}_{L^2_{\varrho}(\cU ; \cV)} \leq C(\bm{b}_{\pi},p,\tau) \cdot k^{\frac12-\frac1p} ,\quad \forall f \in \cH(\bm{b}),
\end{equation*}
with probability at least $1-\epsilon$. This completes the proof.
\end{proof}

\rem{
\label{rem:darned-constants-known}
Note that we have in fact shown that the constant $C(\bm{b}_{\pi},p,\tau)$ in Theorem \ref{t:prob-bound-known} can be taken as $C_0(p) \cdot C_1(\bm{b}_{\pi},p,\tau)$, where $C_0(p) > 0$ depends on $p$ only and $C_1(\bm{b}_{\pi},p,\tau)$ is the constant from \cf{t:weighted-lp-error}, part (a) with $q = 2$ and applied with parameters $\bm{\alpha}_{\pi}$, $\bm{\beta}_{\pi}$ and measure $\varrho_{\bm{\alpha}_{\pi},\bm{\beta}_{\pi}}$. We will use this observation later in the second part of the proof of Theorem \ref{t:main-res-known}.
}

\rem{
\label{rem:uniform-vs-nonuniform-known}
The proofs of \cite[Thm.\ 4.4]{adcock2025near} and Theorem \ref{t:prob-bound-known} diverge in how they bound the discrete error in \ef{LS-error-main}. The former uses the fact that $\nm{f - f_S}^2_{\mathsf{disc}}$ is a sum of independent random variables with mean $\nm{f -  f_{S}}^2_{L^{2}_{\varrho}(\cU ; \cV)}$ to provide a nonuniform bound via Bernstein's inequality. Conversely, the proof of Theorem \ref{t:prob-bound-known} uses the splitting \ef{error-split-1-LS} and repeated applications of the matrix Chernoff inequality to derive a stronger uniform bound.
}

\subsection{Reconstruction map: unknown anisotropy}\label{S:reconstruction}
In this section we consider the case where $\bm{b}$ is unknown. Here we need a different approach to the proof of Theorem~\ref{t:prob-bound-known}, where the construction of $\cT$ relied heavily on a fixed $\bm{b}$. Instead, we now build the reconstruction map $\cT$  based on the solution of a compressed sensing problem. Since it presents few additional difficulties, we now describe the reconstruction map that achieves the factor $\log^4(m) g(m)$ described in Remark \ref{rem:log-term}, where $g$ is any function $g : \bbN \rightarrow [1,\infty)$ with $g(m) \rightarrow +\infty$ subalgebraically fast as $m \rightarrow + \infty$. Theorems \ref{t:main-res-unknown} and \ref{t:prob-bound-unknown} then follow as special cases with $g(m) = \log(m)$.

Let $\cU=[-1,1]^{\bbN}$ and $\cV$ be a Hilbert space, $\bm{\alpha},\bm{\beta}$ satisfy Assumption \ref{main-ass}  and $\varrho = \varrho_{\bm{\alpha},\bm{\beta}}$ be the probability measure in \eqref{tensor-jacobi-measure}. 
We follow the setup of \cite{adcock2025near}, except that we consider tensor Jacobi polynomials instead of just Chebyshev or Legendre polynomials. Let
\be{
\label{k-def}
k = \frac{m}{c L},\quad \text{where }L = L(m,\epsilon) = \log^{4}(m) g(m) + \log(\epsilon^{-1})
}
and $c \geq 1$ is a universal constant that will be chosen later in the proof. Given $k$, let
\be{
\label{lambda-def}
n = \left \lceil k^{\sqrt{g(m)} }\right \rceil, \quad  \lambda = \frac{3}{7 \sqrt{k}}
}
and define the finite set
\begin{equation}
\label{HC_index_set_inf}
\Lambda = \Lambda^{\mathsf{HCI}}_{n} = \left \{ \bm{\nu} = (\nu_k)^{\infty}_{k=1} \in \cF : \prod^n_{k=1} (\nu_k + 1) \leq n\text{ and $\nu_k = 0$ for $k > n$} \right \} \subset \cF.
\end{equation}
A key property of this set is that it contains every anchored set (see  \S\ref{S:ellspaces}) of size at most $n$ (see, e.g., \cite[Prop.~2.18]{adcock2022sparse}). Write $N = |\Lambda|$, let
$\{ \bm{y}_i \}^{m}_{i=1} \subset \cU$ be any sample points and consider the matrix
\begin{equation*}
\bm{A} = \left(  \dfrac{\Psi^{\bm{\alpha},\bm{\beta}}_{\bnu}(\bm{y}_i)}{\sqrt{m}}\right)_{i \in [m], \bnu \in \Lambda} \in \bbR^{m \times N}.
\end{equation*}
Let $\bm{u}$ be the weights given {by \eqref{u-def}} for $\Psi_{\bm{\nu}} = \Psi^{\bm{\alpha},\bm{\beta}}_{\bnu}$. We now define the reconstruction map $\cT : (\cU \times \cV)^m \rightarrow L^2_{\varrho}(\cU ; \cV)$ as
\begin{equation}\label{eq:def_Tunkn}
\begin{split}
&\cT ( \{ (\bm{y}_i,f_i) \}^{m}_{i=1}  )  = \sum_{\bm{\nu} \in \Lambda} \hat{c}_{\bm{\nu}} \Psi_{\bm{\nu}},
\\
& \text{where }\hat{\bm{c}}_{\Lambda} = (\hat{c}_{\bm{\nu}})_{\bm{\nu} \in \Lambda} \in \argmin{\bm{z} \in \cV^N} ~ \Big\{\lambda \nm{\bm{z}}_{1,\bm{u} ; \cV} + \nm{\bm{A} \bm{z} - m^{-1/2} \bm{f}}_{2;\cV}\Big\}
 \end{split}
\end{equation}
and $\bm{f} = (f_i)^{m}_{i=1} \in \cV^m$.
Note that this is not well defined at present, since the optimization problem may have multiple minimizers. We can make it well defined by picking the unique minimizer with smallest $\ell^2(\Lambda ; \cV)$-norm.

\subsection{Proof of Theorem \ref{t:prob-bound-unknown} }
The proof involves modifying the proof of \cite[Thm.\ 4.2]{adcock2025near}. A significant difference between these results is that
the latter provides a nonuniform guarantee that holds for a fixed function $f$. Moreover, \cite[Thm.\ 4.2]{adcock2025near} pertains to Chebyshev and Legendre polynomials only. In contrast, we now establish a uniform error bound with high probability that holds for arbitrary Jacobi polynomials. See Remark \ref{rem:uniform-vs-nonuniform} for some further discussion.

As we see next, in the proof of Theorem \ref{t:prob-bound-unknown} we need to bound the distance $\nm{\bm{x} - \bm{z}}_{2;\cV}$ between two certain vectors $\bm{x},\bm{z} \in \cV^N$. Such a bound can be obtained from the well known \textit{weighted robust Null Space Property (rNSP)} (see \cite[Defn. 6.22]{adcock2022sparse} or \cite[Sec.~4.1]{rauhut2016interpolation}). A  bounded linear operator $\bm{A}:\cV^N \rightarrow \cV^m$ has the weighted rNSP over $\cV$ of order $(k,\bm{u})$ with constants $0\leq \rho <1$ and $\gamma\geq 0$ if  
\begin{equation*}
\nm{\bm{x}_{S}}_{2;\cV} \leq \frac{\rho \nm{\bm{x}_{S^c}}_{1,\bm{u} ; \cV}}{\sqrt{k}} + \gamma \nm{\bm{A} \bm{x}}_{2;\cV},\quad \forall \bm{x} \in \cV^N,
\end{equation*}
for any $S \subseteq [N]$ with $|S|_{\bm{u}} \leq k$. 

\begin{proof}[Proof of Theorem \ref{t:prob-bound-unknown} ]
We divided the proof into three steps.

\pbk
\textit{Step 1: Establishing the rNSP.} Observe that the matrix $\bm{A}$ in \ef{eq:def_A} extends to a bounded linear operator from $\cV^N$ to $\cV^m$ in the obvious way. We first show that $\bm{A}$ has the weighted rNSP of order $(k,\bm{u})$ with $\rho = 1/2$ and $\gamma = 3/2$, where $k >0$ is as in \ef{k-def}.  By \cite[Lem.\ 6.3]{adcock2025near}, it suffices to show that $\bm{A} \in \bbR^{m \times N}$ has the weighted RIP over $\bbR$ of order $(2k,\bm{u})$ with constant $\delta = 1/(4 \sqrt{2}+1)$. To show this, we apply \cite[Lem.\ 6.4]{adcock2025near}. Notice that this lemma is stated for Chebyshev and Legendre polynomials only, but it in fact applies to any orthonormal system of functions. This lemma asserts that the desired weighted RIP holds with probability at least $1-\epsilon$, provided
\be{
\label{m-cond-wRIP}
m \geq c_0 \cdot \delta^{-2} \cdot k \cdot \left ( \log^2(k/\delta) \cdot \log^2(\E n) + \log(2/\epsilon) \right ),
}
for some universal constant $c_0$, where $n$ is as in \ef{lambda-def}. Recall from \ef{k-def} that $k \leq m$ since $m \geq 3$ (which implies that $L \geq 1$) and $c \geq 1$ by assumption. This implies that $n \leq 2 m^{\sqrt{g(m)}}$. Hence, with $\delta = 1/(4 \sqrt{2}+1)$, the right-hand side of the above expression satisfies
\bes{
c_0 \cdot \delta^{-2} \cdot k \cdot \left ( \log^2(k/\delta) \cdot \log^2(\E n) + \log(2/\epsilon) \right ) \lesssim k \cdot \left (\log^{4}(m) g(m) + \log(1/\epsilon) \right ) = k \cdot L.
}
Therefore, \ef{m-cond-wRIP} holds for a sufficiently-large choice of the universal constant $c$ in \ef{k-def}.

\pbk
\textit{Step 2: Error bound.} Let $\bm{b} \in \ell^p_{\mathsf{M}}(\bbN)$ for some $0 < p <1$, $f \in \cH(\bm{b})$ with expansion \ef{f-exp} and coefficients $\bm{c} = \bm{c}^{\bm{\alpha},\bm{\beta}}$, and consider its reconstruction $\hat{f} = \cT(\{ (\bm{y}_i  ,  f(\bm{y}_i) ) \}^{m}_{i=1} )$. Then Parseval's identity implies that
\begin{equation*}
\nm{f - \hat{f}}_{L^2_{\varrho}(\cU ; \cV)} \leq \nm{f - f_{\Lambda}}_{L^2_{\varrho}(\cU ; \cV)} + \nm{\bm{c}_{\Lambda} - \hat{\bm{c}}_{\Lambda}}_{2;\cV},
\end{equation*}
where $f_{\Lambda}$ is as in \ef{def:truncatedF}, $\bm{c}_{\Lambda} = (c_{\bm{\nu}})_{\bm{\nu} \in \Lambda} \in \cV^N$ and $\hat{\bm{c}}_{\Lambda}$ is as in \ef{eq:def_Tunkn}. Observe that
\begin{equation}
\label{A-c-etilde}
\bm{A} \bm{c}_{\Lambda} + \widetilde{\bm{e}} = \frac{1}{\sqrt{m}} (f(\bm{y}_i))^{m}_{i=1},\quad \text{where }
\widetilde{\bm{e}} = \dfrac{1}{\sqrt{m}} (f(\bm{y}_i)-f_{\Lambda}(\bm{y}_i))_{i=1}^m.
\end{equation}
Recall that, with probability at least $1-\epsilon$, $\bm{A}$ has the weighted rNSP of order $(k,\bm{u})$ with $\rho = 1/2$ and $\gamma = 3/2$. We now apply \cite[Lem.\ 7.4]{adcock2024efficient} with $\bm{w} = \bm{u}$, the given values of $\rho$ and $\gamma$, $\bm{x} = \bm{c}_{\Lambda}$, $\widetilde{\bm{x}} = \hat{\bm{c}}_{\Lambda}$, $\bm{b} = m^{-1/2} (f(\bm{y}_i))^{m}_{i=1}$ and $\bm{e} = -\widetilde{\bm{e}}$. Since $\hat{\bm{c}}_{\Lambda}$ is a minimizer of \ef{eq:def_Tunkn} and $\lambda$ is defined in \ef{lambda-def} so that
\bes{
\lambda = \frac{3}{7 \sqrt{k}} = \frac{(1+\rho)^2}{(3+\rho) \gamma \sqrt{k}}
}
this lemma gives
\bes{
\nm{\bm{c}_{\Lambda}-\hat{\bm{c}}_{\Lambda}}_{2;\cV} \lesssim \frac{\sigma_k(\bm{c}_{\Lambda})_{1,\bm{u};\cV}}{\sqrt{k}} +\nm{\widetilde{\bm{e}}}_{2;\cV}.
}
Recall \ef{A-c-etilde} and notice that
\begin{equation}
\label{widetilde-e-bd}
\nm{\widetilde{\bm{e}}}_{2;\cV} \leq \nm{f - f_{\Lambda}}_{L^{\infty}(\cU ; \cV)} \leq \nm{\bm{c}-\bm{c}_{\Lambda}}_{1,\bm{u} ; \cV}.
\end{equation}
Here we used the definition \ef{u-def} of the weights $\bm{u}$. Using this and the previous inequalities, we deduce that
\be{
\label{error-bound-1}
\nm{f - \hat{f}}_{L^2_{\varrho}(\cU ; \cV)} \lesssim \frac{\sigma_k(\bm{c}_{\Lambda})_{1,\bm{u} ; \cV}}{\sqrt{k}}   +\nm{\bm{c}-\bm{c}_{\Lambda}}_{1,\bm{u} ; \cV}.
}
Since $\bm{b} \in \ell^p_{\mathsf{M}}(\bbN)$, part (a) of Theorem \ref{t:weighted-lp-error} with $q = 1$ implies that
\begin{equation*}
{\sigma_k(\bm{c}_{\Lambda})_{1,\bm{u} ; \cV} \leq \sigma_k(\bm{c})_{1,\bm{u} ; \cV}  \leq
C_1(\bm{b},p,\tau) \cdot k^{1-\frac1p},}
\end{equation*}
where  $C_1(\bm{b},p,\tau)$ is the corresponding constant. 
Also, part (a) of \cf{t:weighted-lp-error-monotone} with $q=1$ and the fact that $\Lambda$ contains all anchored sets of size at most $n$ (see, e.g., \cite[Prop.\ 2.18]{adcock2022sparse}) imply that there exists an anchored set $S \subset \Lambda$ with $|S| \leq n$ such that
\begin{equation*}
\nm{\bm{c}-\bm{c}_{\Lambda}}_{1,\bm{u} ; \cV} \leq {
\nm{\bm{c}-\bm{c}_{S}}_{1,\bm{u} ; \cV} }\leq C_{2}(\bm{b},p,\tau) \cdot n^{1-\frac1p},
\end{equation*}
where  $C_{2}(\bm{b},p,\tau)$ is the corresponding constant.
This gives
\begin{equation*}
\nm{f - \hat{f}}_{L^2_{\varrho}(\cU ; \cV)} \lesssim C_1(\bm{b},p,\tau) \cdot k^{\frac12-\frac1p} + C_{2}(\bm{b},p,\tau) \cdot n^{1-\frac1p},
\end{equation*}
with probability at least $1-\epsilon$, and up to a possible change in the constants $C_1$ and $C_2$ by constants independent of $\bm{b}$.

\pbk
\textit{Step 3: Final arguments.} We now divide into two cases, $k < 1$ and $k \geq 1$. Suppose first that $k < 1$. Then this and the fact that $n \geq 1$ imply that
\bes{
\nm{f - \hat{f}}_{L^2_{\varrho}(\cU ; \cV)} \leq C(\bm{b},p,\tau) (m/L)^{1/2-1/p},
}
where $C(\bm{b},p,\tau) = C_1(\bm{b},p,\tau) + C_2(\bm{b},p,\tau)$, with $C_i(\bm{b},p,\tau)$, $i = 1,2$, as above. Now suppose that $k \geq 1$.
Recall the definition of $n$ in \ef{lambda-def}. Given $0 < p <1$, there exists an $m_0(p,g)$ such that $\sqrt{g(m)} \geq \frac{1/p-1/2}{1/p-1}$ for all $m \geq m_0(p,g)$. Therefore
\bes{
n \geq k^{\frac{1/p-1/2}{1/p-1}},\quad \forall m \geq m_0(p,g).
}
This, the definition of $k$ and the facts that $n \geq 1$ and $k \geq 1$ give that
\bes{
\nm{f - \hat{f}}_{L^2_{\varrho}(\cU ; \cV)} \leq C(\bm{b},p,\tau) \cdot \begin{cases} \left ( \frac{m}{L} \right )^{\frac12-\frac1p} & m \geq m_0(p,g) \\ 2 & 3 \leq m < m_0(p,g) \end{cases} \leq C(\bm{b},p,g,\tau) \cdot \left ( \frac{m}{L} \right )^{\frac12-\frac1p}.
}
Combining the two cases $k < 1$ and $k \geq 1$ we deduce that, for any $m \geq 3$, 
\bes{
\nm{f - \hat{f}}_{L^2_{\varrho}(\cU ; \cV)} \leq C(\bm{b},p,g,\tau) \cdot \left ( \frac{m}{L} \right )^{\frac12-\frac1p}.
}
Note that the probabilistic event which leads to this error bound (i.e., the event that the rNSP holds) is independent of $f$, $\bm{b}$ and $p$. Therefore, this bound holds simultaneously for all $f \in \cH(\bm{b})$, $\bm{b} \in \ell^p_{\mathsf{M}}(\bbN)$,  $\bm{b} \geq \bm{0}$ and $0 < p < 1$, with probability at least $1-\epsilon$.
This completes the proof.
\end{proof}

\begin{remark}
\label{rem:darned-constants}
Much as in Remark \ref{rem:darned-constants-known}, we have in fact shown that the constant $C(\bm{b},p,g,\tau)$ in Theorem \ref{t:prob-bound-unknown} has the form
\bes{
C(\bm{b},p,g,\tau) = C_0(p,g) \cdot \left ( C_1(\bm{b},p,\tau) + C_2(\bm{b},p,\tau) \right ),
}
where $C_0(p,g)$ depends on $p$ and $g$ only,
$C_1(\bm{b},p,\tau)$ is the constant of Theorem \ref{t:weighted-lp-error}, part (a) with $q = 1$ and $C_2(\bm{b},p,\tau)$ is the constant of Theorem \ref{t:weighted-lp-error-monotone}, part (a) with $q = 1$. We will use this in the second part of the proof of Theorem \ref{t:main-res-unknown} below. 
\end{remark}

\rem{
\label{rem:uniform-vs-nonuniform}
In order to obtain a uniform guarantee, in the above proof we bound the term $\nm{\widetilde{\bm{e}}}_{2;\cV}$ in \ef{widetilde-e-bd} using the crude, but deterministic bound $\nm{\widetilde{\bm{e}}}_{2;\cV} \leq \nm{f - f_{\Lambda}}_{L^{\infty}(\cU ; \cV)} $. In contrast, the proof of \cite[Thm.\ 4.2]{adcock2025near} uses the fact that $\nm{\widetilde{\bm{e}}}^2_{2;\cV}$ is, much as in Remark \ref{rem:uniform-vs-nonuniform-known}, a sum of independent random variables with mean $\nm{f -  f_{\Lambda}}^2_{L^{2}_{\varrho}(\cU ; \cV)}$ in order to provide a sharper probabilistic bound using Bernstein's inequality. The advantage of this approach is that it allows for the smaller choice $n \approx m / (c_0 L)$ in Step 2. Conversely, in the above proof, to compensate for the cruder bound \ef{widetilde-e-bd} we need to choose $n$ larger, as in \ef{lambda-def}. This has a practical downside, since the computational cost of implementing the reconstruction map $\cT$ scales linearly in $N = |\Lambda| = |\Lambda^{\mathsf{HCI}}_n|$ (recall that the optimization problem in \ef{eq:def_Tunkn} is over $\cV^N$). It is currently unclear whether uniform bounds with the same rate of convergence as in Theorem \ref{t:prob-bound-unknown} could be obtained with a smaller choice of $n$.
}

\section{Proofs of Theorems \ref{t:main-res-known} and \ref{t:main-res-unknown}}\label{S:proofs_main}

\begin{proof}
[Proof of Theorem \ref{t:main-res-known}]
Let
\be{
\label{eps-def-known}
\epsilon = m^{\frac12-\frac1p} < 1,
}
and consider the reconstruction map asserted by Theorem \ref{t:prob-bound-known} with this value of $\epsilon$, which we denote by $\cT_0$. Define the contraction $\cC : L^2_{\varrho}(\cU ; \cV) \rightarrow L^2_{\varrho}(\cU ; \cV)$ by
\begin{equation*}
\cC(g) = \min \left \{ 1 , \frac{1}{\nm{g}_{L^2_{\varrho}(\cU ; \cV)}} \right \} g,
\end{equation*}
and consider the reconstruction map $\cT : (\cU   \times \cV)^m \rightarrow L^2_{\varrho}(\cU ; \cV)$ defined by $\cT = \cC \circ \cT_0$. Then 
\begin{equation*}
\theta_m(\bm{b}) \leq  \bbE \left ( \sup_{f \in \cH(\bm{b})} \nm{f - \cT( \{ (\bm{y}_i , f(\bm{y}_i) ) \}^{m}_{i=1} )}_{L^2_{\varrho}(\cU ; \cV)} \right ).
\end{equation*}
Now, let $C(\bm{b}_{\pi},p,{\tau})$ be the constant of \cf{t:prob-bound-known} and $E$ be the event
\begin{equation*}
E = \left \{ \sup_{f \in \cH(\bm{b})} \nm{f - \cT_0( \{ (\bm{y}_i , f(\bm{y}_i) ) \}^{m}_{i=1} )}_{L^2_{\varrho}(\cU ; \cV)} \leq  C(\bm{b}_{\pi},p,\tau)  \cdot \left ( \frac{m}{\log(m) } \right )^{\frac12-\frac1p} \right \}.
\end{equation*}
Hence, Theorem \ref{t:prob-bound-known} gives that $\bbP(E^c) \leq \epsilon$, where $\epsilon$ is as in \ef{eps-def-known}. Now suppose that $E$ occurs and recall that any $f \in \cH(\bm{b})$ satisfies
\begin{equation}\label{eq:bound_f}
\nm{f}_{L^2_{\varrho}(\cU ; \cV)} \leq \nm{f}_{L^{\infty}(\cR(\bm{b}) ; \cV)} \leq 1.
\end{equation}
Therefore, since $f = \cC(f)$ and $\cC$ is a contraction,
\begin{align*}
\nm{f - \cT( \{ (\bm{y}_i , f(\bm{y}_i)) \}^{m}_{i=1} ) }_{L^2_{\varrho}(\cU ; \cV)} & = \nm{\cC(f) - \cC \circ \cT_0( \{ (\bm{y}_i , f(\bm{y}_i)) \}^{m}_{i=1} ) }_{L^2_{\varrho}(\cU ; \cV)}
\\
& \leq \nm{f - \cT_0( \{ (\bm{y}_i , f(\bm{y}_i)) \}^{m}_{i=1} ) }_{L^2_{\varrho}(\cU ; \cV)}
\\
& \leq C(\bm{b}_{\pi},p,{\tau})  \left ( \frac{m}{\log(m)} \right )^{\frac12-\frac1p}.
\end{align*}
Conversely, if $E$ does not occur, then, due to the fact that $\nm{\cC(g)}_{L^2_{\varrho}(\cU ; \cV)} \leq 1$, we always have
\bes{
\nm{f - \cT( \{ (\bm{y}_i , f(\bm{y}_i) ) \}^{m}_{i=1} ) }_{L^2_{\varrho}(\cU ; \cV)} \leq 2.
}
We now use the law of total expectation, to obtain
\begin{align*}
\bbE  \left ( \sup_{f \in \cH(\bm{b})} \nm{f - \cT( \{ (\bm{y}_i , f(\bm{y}_i)) \}^{m}_{i=1} )}_{L^2_{\varrho}(\cU ; \cV)} \right ) 
=   & ~  \bbE \left ( \sup_{f \in \cH(\bm{b})} \nm{f - \cT( \{ (\bm{y}_i , f(\bm{y}_i) )\}^{m}_{i=1} )}_{L^2_{\varrho}(\cU ; \cV)} | E \right ) \bbP(E)
\\
&	 + \bbE \left ( \sup_{f \in \cH(\bm{b})} \nm{f - \cT( \{ (\bm{y}_i , f(\bm{y}_i) ) \}^{m}_{i=1} )}_{L^2_{\varrho}(\cU ; \cV)} | E^c \right ) \bbP(E^c)
\\
 \leq & ~  C(\bm{b}_{\pi},p,{\tau}) \left ( \frac{m}{\log(m)} \right )^{\frac12-\frac1p} + 2 \epsilon. 
\end{align*}
The first result now follows from the definition \ef{eps-def-known} of $\epsilon$. 

We now prove the second part of the theorem. Let $\bm{b} \in \ell^p(\bbN)$, $\bm{b} \geq \bm{0}$ with $\nm{\bm{b}}_{p} \leq 1$ and $t \in (p,1)$. Let $C(\bm{b}_{\pi},t,\tau)$ be the constant of \cf{t:prob-bound-known} with $p$ replaced by $t$ (since $\bm{b} \in \ell^t(\bbN)$ as $t > p$), and recall from Remark \ref{rem:darned-constants-known} that the constant $C(\bm{b}_{\pi},t,\tau) = C_0(t) \cdot C_1(\bm{b}_{\pi},t,\tau)$, where $C_0(t)$ depends on $t$ only and $C_1(\bm{b}_{\pi},t,\tau)$ is the constant from    \cf{t:weighted-lp-error}, part (a) with $q=2$, $p$ replaced by $t$ and with parameters $\bm{\alpha}_{\pi}$, $\bm{\beta}_{\pi}$ and measure $\varrho_{\bm{\alpha}_{\pi},\bm{\beta}_{\pi}}$. Applying \cf{t:weighted-lp-error}, part (b) we deduce that
\be{
\label{eq:c_tilde2}
C(\bm{b}_{\pi},t,\tau) \leq \widetilde{C}(p,t,\tau).
}
Now consider the event
\bes{
E = \left \{ \sup_{f \in \cH(\bm{b})} \nm{f - \cT_0( \{ (\bm{y}_i , f(\bm{y}_i) ) \}^{m}_{i=1} )}_{L^2_{\varrho}(\cU ; \cV)} \leq  \widetilde{C}(p,t,\tau)  \cdot \left ( \frac{m}{\log(m) } \right )^{\frac12-\frac1t} \right \}.
}
Then, Theorem \ref{t:prob-bound-known} and \ef{eq:c_tilde2} imply that $\bbP(E^c) \leq \epsilon$, where $\epsilon$ is as in \ef{eps-def-known}. We now argue in the same way as before to deduce that
\bes{          
\bbE \left ( \sup_{f \in \cH(\bm{b})} \nm{f - \cT( \{ (\bm{y}_i , f(\bm{y}_i)) \}^{m}_{i=1} )}_{L^2_{\varrho}(\cU ; \cV)} \right ) 
 \leq  \widetilde{C}(p,t,\tau)  \cdot \left ( \frac{m}{\log(m)} \right )^{\frac12-\frac1t}
}
and therefore
\begin{equation*}
  \overline{\theta_m}(p )  \leq \sup_{\substack{\bm{b} \in \ell^p(\bbN), \bm{b} \geq \bm{0} \\ \nm{\bm{b}}_{p} \leq 1 }} \bbE \left ( \sup_{f \in \cH(\bm{b})} \nm{f - \cT( \{ (\bm{y}_i , f(\bm{y}_i)) \}^{m}_{i=1} )}_{L^2_{\varrho}(\cU ; \cV)} \right )  \leq \widetilde{C}(p,t,\tau) \cdot \left( \frac{m}{\log(m)}\right)^{\frac12-\frac1t}.
  \end{equation*}  
To remove the factor $\log(m)$ from the bound we note that $t \in (p,1)$ is arbitrary. Hence, up to a possible change in the constant $\widetilde{C}(p,t,\tau)$, the result follows immediately.
\end{proof}

\begin{proof}[Proof of Theorem \ref{t:main-res-unknown}]
In order to prove the first part of Theorem \ref{t:main-res-unknown}, let
\be{
\label{eps-def-unknown}
\epsilon = m^{-\frac1p} < 1,
}
and consider the reconstruction map $\cT$ asserted by \cf{t:prob-bound-unknown} with this value of $\epsilon$. This theorem implies that $\bbP(E^c) \leq \epsilon$, where
\begin{equation*}
E = \bigcap_{\substack{\bm{b} \in \ell^p_{\mathsf{M}}(\bbN),\bm{b} \geq 0 \\ 0 < p < 1}} \left \{ \sup_{f \in \cH(\bm{b})} \nm{f - \cT(\{ (\bm{y}_i , f(\bm{y}_i) )\}^{m}_{i=1}) }_{L^2_{\varrho}(\cU ; \cV)} \leq C(\bm{b},p,g,\tau)  \left ( \frac{m}{\log^{4}(m) g(m)} \right )^{1/2-1/p} \right \}.
\end{equation*}
Consider arbitrary $0 < p <1$, $\bm{b} \in \ell^p_{\mathsf{M}}(\bbN)$, $\bm{b} \geq \bm{0}$, and $f \in \cH(\bm{b})$.
If the event $E$ occurs, then we have 
\begin{equation*}
 \nm{f - \cT(\{ (\bm{y}_i , f(\bm{y}_i) )\}^{m}_{i=1}) }_{L^2_{\varrho}(\cU ; \cV)} \leq C(\bm{b},p,g,\tau)  \left ( \frac{m}{\log^{4}(m) g(m)} \right )^{1/2-1/p}.
\end{equation*}
Conversely, suppose that the event $E$ does not occur. Then
\begin{equation}
\nm{\cT(\{ (\bm{y}_i , f(\bm{y}_i)) \}^{m}_{i=1}) }_{L^2_{\varrho}(\cU ; \cV)} = \nm{\hat{\bm{c}}_{\Lambda}}_{2;\cV} \leq \nm{\hat{\bm{c}}_{\Lambda}}_{1,\bm{u} ; \cV}.
\end{equation}
Writing $\bm{f} = (f(\bm{y}_i))^{m}_{i=1}$ and using the fact that  $\hat{\bm{c}}_{\Lambda}$ is a minimizer of \eqref{eq:def_Tunkn}, we have  
\begin{equation*}
\lambda \nm{\hat{\bm{c}}_{\Lambda}}_{1,\bm{u} ; \cV} \leq \lambda \nm{\bm{0}}_{1,\bm{u} ; \cV} + m^{-1/2} \nm{\bm{f}}_{2;\cV} \leq \nm{f}_{L^{\infty}(\cU ; \cV)}\leq 1.
\end{equation*}
Using \ef{k-def} and \ef{lambda-def}, we deduce that
\be{
\label{CS-error-bad-event}
\nm{f - \cT(\{ (\bm{y}_i , f(\bm{y}_i) )\}^{m}_{i=1}) }_{L^2_{\varrho}(\cU ; \cV)}  \leq \nm{f}_{L^2_{\varrho}(\cU ; \cV)} + {1}/{\lambda}  \lesssim \sqrt{m}.
}
Having bounded the reconstruction error when $E$ does or does not occur, we now argue as in the proof of \cf{t:main-res-known} by the law of total expectation to obtain the final result.

The proof of the second part of Theorem \ref{t:main-res-unknown} uses similar arguments to those in \cite[Theorem~4.7]{adcock2024optimal}. Fix $0 < p < 1$. Then
\cf{t:prob-bound-unknown} implies that $\bbP(\widetilde{E}^c) \leq \epsilon$, where
\begin{equation*}
\widetilde{E} = \bigcap_{\substack{\bm{b} \in \ell^t_{\mathsf{M}}(\bbN),\bm{b} \geq 0 \\ t \in (p,1)}} \left \{ \sup_{f \in \cH(\bm{b})} \nm{f - \cT(\{ (\bm{y}_i , f(\bm{y}_i) )\}^{m}_{i=1}) }_{L^2_{\varrho}(\cU ; \cV)} \leq C(\bm{b},t,g,\tau)  \left ( \frac{m}{\log^{4}(m) g(m) } \right )^{1/2-1/t} \right \}.
\end{equation*}
From Remark~\ref{rem:darned-constants}, we note that $C(\bm{b},t,g,\tau)= C_0(t,g) \cdot (C_1(\bm{b},t,\tau)+C_{2}(\bm{b},t,\tau))$, where $C_1(\bm{b},t,\tau)$ is the constant of Theorem \ref{t:weighted-lp-error}, part (a) with $q=1$ and $C_2(\bm{b},t,\tau)$ is the constant of Theorem \ref{t:weighted-lp-error-monotone}, part (a) with $q=1$.
 Now for any $t \in (p,1)$ applying Theorem \ref{t:weighted-lp-error}, part (b) and Theorem \ref{t:weighted-lp-error-monotone}, part (b) with $r = t$, we get that
\begin{equation} \label{eq:bound_eqs}
\sup_{\|\bm{b}\|_{p,\mathsf{M}}\leq 1}  C(\bm{b},t,g,\tau) \leq  C_0(t,g) \cdot \left(
\sup_{\|\bm{b}\|_{p,\mathsf{M}}\leq 1}  C_1(\bm{b},t,\tau) +
\sup_{\|\bm{b}\|_{p,\mathsf{M}}\leq 1}  C_{2}(\bm{b},t,\tau)  \right) \leq 
\widetilde{C}(p,t,g,\tau).
\end{equation}
Now recall that
\begin{equation*}
\cH(p,\mathsf{M}) = \bigcup \lbrace \cH(\bm{b}): \bm{b} \in \ell^p_{\mathsf{M}} (\bbN), \bm{b} \geq 0, \|\bm{b}\|_{p,\mathsf{M}} \leq 1 \rbrace.
\end{equation*}
Since $t \in (p,1)$, any $\bm{b} \in  \ell^p_{\mathsf{M}}(\bbN)$ satisfies $\bm{b} \in  \ell^t_{\mathsf{M}}(\bbN)$,  and therefore we have 
\begin{align*}
\widetilde{E} 
& \subseteq \bigcap_{\substack{\bm{b} \in \ell^p_{\mathsf{M}}(\bbN),\bm{b} \geq 0 \\ \nm{\bm{b}}_{p,\mathsf{M}} \leq 1}} \left \{ \sup_{f \in \cH(\bm{b})} \nm{f - \cT(\{ (\bm{y}_i , f(\bm{y}_i) )\}^{m}_{i=1}) }_{L^2_{\varrho}(\cU ; \cV)} \leq C(\bm{b},t,g,\tau)  \left ( \frac{m}{\log^{4}(m) g(m) } \right )^{1/2-1/t} \right \} \\
 &\subseteq \bigcap_{\substack{\bm{b} \in \ell^p_{\mathsf{M}}(\bbN),\bm{b} \geq 0 \\ \nm{\bm{b}}_{p,\mathsf{M}} \leq 1}} \left \{ \sup_{f \in \cH(\bm{b})} \nm{f - \cT(\{ (\bm{y}_i , f(\bm{y}_i)) \}^{m}_{i=1}) }_{L^2_{\varrho}(\cU ; \cV)} \leq \widetilde{C}(p,t,g,\tau)  \left ( \frac{m}{\log^{4}(m) g(m) } \right )^{1/2-1/t} \right \} 
 \\
& = \left \{ \sup_{f \in \cH(p,\mathsf{M})} \nm{f - \cT(\{ (\bm{y}_i , f(\bm{y}_i)) \}^{m}_{i=1}) }_{L^2_{\varrho}(\cU ; \cV)} \leq \widetilde{C}(p,t,g,\tau)  \left ( \frac{m}{\log^{4}(m) g(m)} \right )^{1/2-1/t} \right \}  
 =: E.
\end{align*}
Hence $\bbP(E) \geq \bbP(\widetilde{E}) \geq 1-\epsilon$. We now argue as above. We first bound the error using \ef{CS-error-bad-event} when the event $E$ does not occur. We then use the law of total expectation to deduce the bound
\bes{
\theta(p,\mathsf{M}) \leq \bbE \left ( \sup_{f \in \cH(p,\mathsf{M})} \nm{f - \cT(\{ (\bm{y}_i , f(\bm{y}_i)) \}^{m}_{i=1}) }_{L^2_{\varrho}(\cU ; \cV)} \right ) \leq \widetilde{C}(p,t,g,\tau)  \left ( \frac{m}{\log^4(m) g(m)} \right )^{1/2-1/t}.
}
To remove the $\log^4(m) g(m)$ factor from the bound we note that $t\in (p,1)$ is arbitrary and $g$ grows subalgebraically fast in $m$. Then, up to a possible change in the constant $\widetilde{C}(p,t,g,\tau)$, the result follows immediately.
\end{proof}

\section*{Acknowledgments}
BA acknowledges the support of the Natural Sciences and Engineering Research Council of Canada of Canada (NSERC) through grant RGPIN-2021-611675. They would like to thank David Krieg for useful discussions, and the two anonymous referees, whose feedback greatly improved the paper.

\appendix

\section{Supporting results on Jacobi polynomials}\label{app:poly-ests}

In this appendix, we present several supporting results on Jacobi polynomials that are used in the main paper.
As in \S \ref{sec:jacobi-polys}, let $\alpha,\beta > - 1$ and $P^{\alpha,\beta}_{\nu}(y)$ denote the standard Jacobi polynomial of degree $\nu \in \bbN_0$. Note that $P^{\alpha,\beta}_{0} \equiv 1$. These polynomials are orthogonal with respect to the Jacobi measure \ef{jacobi-meas}, and  satisfy the orthogonality relation
\bes{
\int^{1}_{-1} P^{\alpha,\beta}_{\nu}(y) P^{\alpha,\beta}_{\mu}(y) \D \omega_{\alpha,\beta}(y) = \delta_{\nu \mu} {h^{\alpha,\beta}_{\nu}}, 
}
where
\begin{equation}\label{eq:h_jacobi}
h^{\alpha,\beta}_{\nu} = \left(\frac{2^{\alpha+\beta+1}}{2 \nu + \alpha + \beta + 1} \right) \frac{\Gamma(\nu+\alpha+1) \Gamma(\nu + \beta + 1)}{\Gamma(\nu+\alpha+\beta+1) \Gamma(\nu+1)},\quad \nu \in \bbN_0.
\end{equation}
See \cite[Chpt.\ 4]{szego1975orthogonal} (also \cite[Sec.~2]{guo2009generalized}, \cite[Table 18.3.1]{oliver2010nist}).
Therefore, the orthonormal Jacobi polynomials with respect to the probability measure \ef{1d-jacobi-meas} can be expressed as
\begin{equation}
\label{Jacobi-ON}
\Psi^{\alpha,\beta}_{\nu} = ( h^{\alpha,\beta}_0 / h^{\alpha,\beta}_{\nu} )^{1/2} P^{\alpha,\beta}_{\nu}, \quad  \forall \nu \in \bbN_0.
\end{equation}
We now show a key technical lemma (Lemma \ref{l:jacobi-uniform-norm}) on the uniform growth of the Jacobi polynomials. This lemma was used in the proofs of Theorems \ref{t:weighted-lp-error}--\ref{t:weighted-lp-error-monotone}. Its proof relies on the following result.
\lem{
Let $\alpha,\beta > -1$, $\varrho = \varrho_{\alpha,\beta}$ be as in \ef{1d-jacobi-meas} and $P$ be any polynomial of degree $\nu \in \bbN$. Then
\bes{
\nm{P}_{L^{\infty}([-1,1])} \leq \sqrt{ \frac{9 \cdot 2^{\alpha+\beta+1} \Gamma(\alpha+1) \Gamma(\beta+1)}{\Gamma(\alpha+\beta+2)}} (\sqrt{3} \nu)^{1+|\alpha|+|\beta|} \nm{P}_{L^2_{\varrho}([-1,1])}.
}
}
\prf{
By the Markov brother's inequality, $\nm{P'}_{L^{\infty}([-1,1])} \leq \nu^2 \nm{P}_{L^{\infty}([-1,1])}$. Hence there is an interval $I$ of length $1/(3 \nu^2)$ with $I \subseteq [-1+1/(3 \nu^2) , 1 - 1/(3 \nu^2) ]$ for which $|P(y)| \geq \nm{P}_{L^{\infty}([-1,1])} / 3$, $\forall y \in I$. Using this, the fact that $\varrho = \varrho_{\alpha,\beta} = \omega_{\alpha,\beta} / h^{\alpha,\beta}_0$, where $h^{\alpha,\beta}_0$ is as in \ef{eq:h_jacobi}, and \ef{jacobi-meas}, we get
\eas{
\frac{\nm{P}^2_{L^{\infty}([-1,1])}}{\nm{P}^2_{L^2_{\varrho}([-1,1])}} \leq \frac{9}{\int_{I} \D \varrho(y)} \leq \frac{9 \cdot (3 \nu^2) \cdot h^{\alpha,\beta}_0}{\min_{|y| \leq 1-1/(3 \nu^2)} \{ (1-y)^{\alpha}(1+y)^{\beta} \} }.
}
Now observe that $\min_{|y| \leq 1-1/(3 \nu^2)} (1-y)^{\alpha} = (3 \nu^2)^{-\alpha}$ if $\alpha \geq 0$ and $\min_{|y| \leq 1-1/(3 \nu^2)} (1-y)^{\alpha} \geq 2^{\alpha}$ if $-1 < \alpha < 0$. Since $\nu \geq 1$, we deduce that $\min_{|y| \leq 1-1/(3 \nu^2)} (1-y)^{\alpha} \geq (3 \nu^2)^{-|\alpha|}$. Using this and \ef{eq:h_jacobi}, we obtain
\bes{
\frac{\nm{P}^2_{L^{\infty}([-1,1])}}{\nm{P}^2_{L^2_{\varrho}([-1,1])}} \leq 9 \cdot (3 \nu^2)^{1+|\alpha|+|\beta|} \frac{2^{\alpha+\beta+1} \Gamma(\alpha+1) \Gamma(\beta+1)}{\Gamma(\alpha+\beta+2)},
}
as required.
}

\lem{
\label{l:jacobi-uniform-norm}
Suppose that $\tau - 1 \leq \alpha,\beta \leq 1/\tau$ for some $\tau > 0$ and consider the orthonormal Jacobi polynomials $\{\Psi^{\alpha,\beta}_{\nu} \}_{\nu \in \bbN_0}$ with respect to \ef{1d-jacobi-meas}. Then 
\bes{
\nm{\Psi^{\alpha,\beta}_{\nu}}_{L^{\infty}([-1,1])} \leq c ( 1 + \nu)^{\gamma},\quad \forall \nu \in \bbN_0,
}
where $c = c(\tau)$ and $\gamma = \gamma(\tau)$ depend on $\tau$ only.
}
\prf{
Since $\Psi^{\alpha,\beta}_{0} \equiv 1$, the result holds for $\nu = 0$ for any $c \geq 1$ and $\gamma \geq 0$. Now suppose that $\nu \geq 1$. By the previous lemma,
\bes{
\nm{\Psi^{\alpha,\beta}_{\nu}}_{L^{\infty}([-1,1])} \leq \sqrt{ \frac{9 \cdot 2^{\alpha+\beta+1} \Gamma(\alpha+1) \Gamma(\beta+1)}{\Gamma(\alpha+\beta+2)}} (\sqrt{3} \nu)^{1+|\alpha|+|\beta|}.
}
On $(0,\infty)$ the Gamma function $\Gamma(z)$ is increasing as $z \rightarrow 0^+$ or $z \rightarrow \infty$ and has a unique minimum at $z_* = 1.461\ldots$ with $\Gamma_* = \Gamma(z_*)= 0.885\ldots > 0$. Using this and the fact that $|\alpha| , | \beta | \leq 1/\tau$, we obtain
\bes{
\nm{\Psi^{\alpha,\beta}_{\nu}}_{L^{\infty}([-1,1])} \leq 3^{1/\tau+3/2} \cdot 2^{1/\tau + 1/2} \cdot \frac{\max \{ \Gamma(\tau) , \Gamma(1/\tau+1) \}}{\sqrt{\Gamma_*}} \cdot (1+\nu)^{1+2 /\tau }.
}
This gives the result.
}

Next, we consider the following assumption, which we show is equivalent to Assumption \ref{main-ass} (recall Remark \ref{rem:Jac-cond-equiv}).
\begin{assumption}
\label{main-ass-alt}
The sequences $\bm{\alpha},\bm{\beta} > - \bm{1}$ are such that there is a polynomial $p$ depending on $\bm{\alpha}$ and $\bm{\beta}$ for which the univariate Jacobi polynomials satisfy
\bes{
\nm{\Psi^{\alpha_j,\beta_j}_{\nu_j}}_{L^{\infty}([-1,1])} \leq p(\nu_j),\quad \forall \nu_j \in \bbN_0,\ j \in \bbN.
}
\end{assumption}

\prop{
\label{prop:equiv-ass}
Assumption \ref{main-ass-alt} is equivalent to Assumption \ref{main-ass}. 
}
\prf{
Lemma \ref{l:jacobi-uniform-norm} shows that Assumption \ref{main-ass} implies Assumption \ref{main-ass-alt}. Conversely, suppose that Assumption \ref{main-ass-alt} holds. Recall that $P^{\alpha,\beta}_{\nu}(1) = {\nu + \alpha \choose \nu }$ and also that $P^{\alpha,\beta}_{\nu}(-1) = (-1)^{\nu} {\nu + \beta \choose \nu }$. Therefore
\bes{
\nm{\Psi^{\alpha,\beta}_{\nu}}^2_{L^{\infty}([-1,1])} \geq \max \{ |\Psi^{\alpha,\beta}_{\nu}(\pm 1) |^2 \} =\left ( \frac{h^{\alpha,\beta}_0}{h^{\alpha,\beta}_{\nu}} \right ) \max \left \{ {\nu + \alpha \choose \nu } , {\nu + \beta \choose \nu } \right \}^2.
}
Notice from \ef{eq:h_jacobi} that $h^{\alpha,\beta}_{\nu} \sim 2^{\alpha+\beta} \nu^{-1}$ and ${\nu + \alpha \choose \nu }  \sim \frac{\nu^{\alpha} \E^{-\alpha}}{\Gamma(\alpha+1)}$ as $\nu \rightarrow \infty$ for fixed $\alpha,\beta$. Therefore, Assumption \ref{main-ass-alt} can only hold if $\alpha_i,\beta_i$ are bounded above uniformly in $i$.

On the other hand, consider the quadratic polynomial $P^{\alpha,\beta}_{2}(x)$, which has explicit expression (see, e.g., \cite[eqn. (18.5.7)]{oliver2010nist})
\bes{
P^{(\alpha,\beta)}_{2}(y) = \frac{(\alpha+1)(\alpha+2)}{2} + (\alpha+2)(\alpha+\beta+3) \frac{y-1}{2} + \frac{(\alpha+\beta+3)(\alpha+\beta+4)}{2} \left ( \frac{y-1}{2} \right )^2.
}
This polynomial has a minimum at $y^* = \frac{\beta-\alpha}{4+\alpha+\beta} \in [-1,1]$. Substituting this value and using the fact that $\alpha,\beta > -1$, we get
\bes{
\nm{P^{(\alpha,\beta)}_2}_{L^{\infty}([-1,1])} \geq | P^{(\alpha,\beta)}_{2}(y^*) | = \frac{(\alpha+2)(\beta+2)}{2(\alpha+\beta+4)} \geq \frac{\max \{ \alpha,\beta \} + 2 }{2 ( 2\max \{ \alpha , \beta \} + 4 )} = \frac14.
}
We now apply \ef{eq:h_jacobi} and use the fact that $\alpha,\beta > -1$ once more to obtain
\bes{
\nms{\Psi^{(\alpha,\beta)}_2}^2_{L^{\infty}([-1,1])} \geq \frac{(\alpha+\beta+5)(\alpha+\beta+2)}{32(\alpha+2)(\alpha+1)(\beta+2)(\beta+1)} \geq \frac{2(\alpha+\beta+5)}{(\alpha+2)(\beta+2)} \max \left \{ \frac{1}{\alpha+1} , \frac{1}{\beta+1} \right \}.
}
The right-hand side tends to infinity if $\alpha$ or $\beta$ tends to $-1$. Hence Assumption \ref{main-ass-alt} can only hold if $\alpha_i,\beta_i$ are bounded away from $-1$ uniformly in $i$. This completes the proof.
}

\section{The relationship between $\overline{\theta_m}(p,\mathsf{M})$ and $\overline{\theta_m}(p)$}\label{app:theta_bar_m}

In this appendix, we elaborate upon the discussion in Remark \ref{rem:theta_bar_equate}. First, we show the following.

\lem{
Let $\varrho = \varrho_1 \times \varrho_2 \times \cdots$ be a tensor-product probability measure on $\cU$, where each $\varrho_i$ is a probability measure on $[-1,1]$, $\bm{b} = (b_i)_{i \in \bbN} \in [0,\infty)^{\bbN}$ and $\pi : \bbN \rightarrow \bbN$ be a bijection. Consider the quantity $\theta_m(\bm{b})$ defined in \ef{theta-m-b}. Then
\bes{
\theta_m(\bm{b}) = \Theta_m(\cH(\bm{b}) ; C(\cU ; \cV) , L^2_{\varrho}(\cU ; \cV) ) = \Theta_m(\cH(\bm{b}_{\pi}) ; C(\cU ; \cV) , L^2_{\varrho_{\pi}}(\cU ; \cV) ),
}
where $\Theta_m(\cdot ; \cdot , \cdot )$ is as in \ef{Theta_m-def}, $\bm{b}_{\pi} = (b_{\pi(i)})_{i \in \bbN}$ and $\varrho_{\pi} = \varrho_{\pi(1)} \times \varrho_{\pi(2)} \times \cdots$ is the permuted measure. In particular, if $\varrho_1 = \varrho_2 = \cdots$ then
\bes{
\theta_m(\bm{b}) = \theta_m(\bm{b}_{\pi}).
}
}
\prf{
The first equality is just the definition \ef{theta-m-b} of $\theta_m(\bm{b}) $. We now prove the second.
By \ef{Theta_m-def},
\bes{
\theta_m(\bm{b})  = \inf_{\cL,\cT}  \sup_{f \in \cH(\bm{b})} \nm{f - \cT (\cL(f)) }_{L^2_{\varrho}(\cU ; \cV)},
}
where the infimum is taken over all adaptive sampling operators $\cL : C(\cU ; \cV) \rightarrow \cV^m$ and reconstruction maps $\cT : \cV^m \rightarrow L^2_{\varrho}(\cU ; \cV)$. By \ef{Hb-permute}, we can write 
\bes{
\theta_m(\bm{b}) = \inf_{\cL,\cT} \sup_{g \in \cH(\bm{b}_{\pi})} \nm{g \circ \pi - \cT(\cL(g \circ \pi)) }_{L^2_{\varrho}(\cU ; \cV)}.
}
Given any $\cL$, let $\cL' : C(\cU ; \cV) \rightarrow \cV^m$ be defined by $\cL'(g) = \cL(g \circ \pi)$. Notice that $\cL'$ is also an adaptive sampling operator. Moreover, any adaptive sampling operator $\cL'$ can be expressed in this form for some adaptive sampling operator $\cL$. Similarly, given any $\cT$, define $\cT' : \cV^m \rightarrow L^2_{\varrho_{\pi}}(\cU ; \cV)$ by $\cT'(\bm{f})(\cdot) = \cT(\bm{f})(\pi(\cdot))$ for any $\bm{f} \in \cV^m$. Notice that $\cT'$ is a reconstruction map, and that any reconstruction map $\cT' : \cV^m \rightarrow L^2_{\varrho_{\pi}}(\cU ; \cV)$ can be expressed in the same way for some reconstruction map $\cT : \cV^m \rightarrow L^2_{\varrho}(\cU ; \cV)$. Therefore
\bes{
\theta_m(\bm{b}) = \inf_{\cL',\cT'}  \sup_{g \in \cH(\bm{b}_{\pi})} \nm{g \circ \pi(\cdot) - \cT'(\cL'(g))(\pi(\cdot)) }_{L^2_{\varrho}(\cU ; \cV)} =  \inf_{\cL',\cT'}  \sup_{g \in \cH(\bm{b}_{\pi})}\nm{g (\cdot) - \cT'(\cL'(g))(\cdot) }_{L^2_{\varrho_{\pi}}(\cU ; \cV)}.
}
This gives the first result. For the second result, we simply notice that $\varrho = \varrho_{\pi}$ whenever $\varrho$ is a tensor product of the same one-dimensional measure.
}

The previous lemma implies that $\theta_m(\bm{b})$ has no reason to coincide with $\theta_{m}(\bm{b}_{\pi})$, due to the change of measure from $\varrho$ to $\varrho_{\pi}$. However, this does hold whenever $\varrho$ is a tensor product of the same measure. We now show that the terms $\overline{\theta_m}(p)$ and $\overline{\theta_m}(p,\mathsf{M})$ in \ef{eq:def_theta} also coincide whenever this holds.
\prop{
Let $\varrho = \varrho_1 \times \varrho_1 \times \cdots $  be a tensor-product probability measure on $\cU$, where $\varrho_1$ is a probability measure on $[-1,1]$. Then the terms in \ef{eq:def_theta} coincide, i.e., $\overline{\theta_m}(p) = \overline{\theta_m}(p,\mathsf{M})$.
}
\prf{
We always have that $\overline{\theta_m}(p,\mathsf{M}) \leq \overline{\theta_m}(p)$. We now show the opposite inequality. Let $\bm{b} \in \ell^p(\bbN)$ with $\bm{b} \geq \bm{0}$ and $\nm{\bm{b}}_{p} \leq 1$. Let $\pi : \bbN \rightarrow \bbN$ be a bijection that gives a nonincreasing rearrangement of $\bm{b}$, i.e., $b_{\pi(1)} \geq b_{\pi(2)} \geq \cdots$. Then $\bm{b}_{\pi} \in \ell^p_{\mathsf{M}}(\bbN)$ and $\nm{\bm{b}_{\pi}}_{p,\mathsf{M}} = \nm{\bm{b}}_{p} \leq 1$. Using the previous lemma and the definition of $\overline{\theta_m}(p,\mathsf{M})$, we deduce that
\bes{
\theta_m(\bm{b}) = \theta_m(\bm{b}_{\pi}) \leq \overline{\theta_m}(p,\mathsf{M}).
}
Since $\bm{b}$ was arbitrary, we obtain the result.
}

\section{Notation}
Table \ref{tab:notation} summarizes main notation used in the paper. Note that we also consider scalar-valued versions of various spaces and norms, in which case we drop the $;\cV$ symbol used in the table.

     \begin{table}[t!]
     \caption{Table of notation.}
     
     \vspace{-0.5cm}
\begin{longtable}{ @{}  c | p{.75\textwidth} } 
            $\cU$ &  Domain of the function, equal to $[ -1,1]^{\bbN}$. \\ 
            $\cV$, $\ip{\cdot}{\cdot}_{\cV}$, $\nms{\cdot}_{\cV}$ &  The codomain of the function, a Hilbert space with inner product $\ip{\cdot}{\cdot}_{\cV}$ and norm $\nms{\cdot}_{\cV}$.\\ 
            $f$ & Function to approximate, $f : \cU \rightarrow \cV$.\\ 
            $\bm{y} _1,\ldots,\bm{y} _m$ & Sample points in $\cU$. \\ 
            $\bm{\alpha},\bm{\beta}$ & Sequences defining the tensor-product Jacobi measure and polynomials. See \S\ref{sec:jacobi-polys}. \\
	   $\Psi_{\bnu} = \Psi^{\bm{\alpha},\bm{\beta}}_{\bm{\nu}}$     &    The $\bnu$th tensor-product Jacobi polynomial. \\ 
            $\bm{c}_{\bnu} = \bm{c}^{\bm{\alpha},\bm{\beta}}_{\bnu}$ & The coefficient of a function corresponding to $\Psi_{\bnu}$.  \\ 
            $f_{S}$ & Truncated expansion of $f$ corresponding to multi-indices $\bnu \in S$. \\ 
  	$\cH(\bm{b})$ & The class of $(\bm{b},1)$-holomorphic functions with $L^{\infty}$-norm at most one. See~\S\ref{S:spaces}. \\ 
  			$\cH(p),\cH(p,\mathsf{M})$ & The union of $\cH(\bm{b},1)$ for  $\|\bm{b}\|_p \leq 1$ or $\|\bm{b}\|_{p,\mathsf{M}} \leq 1$, respectively, where $0<p<1$. See~\S\ref{S:spaces}. \\ 
            $\theta_m(\bm{b}),\overline{\theta_m}(p),\overline{\theta_m} (p,\mathsf{M})$ & Various $m$-widths for the known anisotropy case. See \eqref{theta-m-b}--\eqref{eq:def_theta}.\\
            
            $\theta_m(p),\theta_m(p,\mathsf{M})$ & Various $m$-widths for the unknown anisotropy case. See \eqref{theta-upsilon-unknown-aniso}.
            \\ 
            $\ell^p(\Lambda ; \cV)$, $\nms{\cdot}_{p;\cV}$     &   The space of $\ell^p$-summable $\cV$-valued sequences indexed over $\Lambda$ and its (quasi-)norm. See \S \ref{S:ellspaces}. \\
             $\ell^p_{\bm{w}}(\Lambda ; \cV)$, $\nms{\cdot}_{p,\bm{w};\cV}$     &      The space of weighted $\ell^p$-summable $\cV$-valued sequences indexed over $\Lambda$ and its (quasi-)norm. See \S \ref{S:ellspaces}. \\
              $\ell^p_{\mathsf{A}}(\Lambda;\cV)$, $\nms{\cdot}_{p,\mathsf{A};\cV}$ & The space of anchored $\ell^p$-summable $\cV$-valued sequences and its (quasi-) norm. See \S \ref{S:ellspaces}. \\
            $\sigma_s(\cdot)_{p;\cV}$ & The $\ell^p$-norm best $s$-term approximation error. See \eqref{sigma-s-def}. \\ 
            $\sigma_k(\cdot)_{p,\bm{w}}$ & The $\ell^p_{\bm{w}}$-norm weighted best $(k,\bm{w})$-term approximation error. See \eqref{weighted-k-w-term}.\\ 
            $\sigma_{s,\mathsf{A}}(\cdot)_{p;\cV}$ & The $\ell^p$-norm best $s$-term approximation error in anchored sets. See~\eqref{def:best_anch}.\\
$L^p_{\varrho}(\cU;\cV)$, $\nms{\cdot}_{L^p_{\varrho}(\cU ; \cV)}$  &      The Lebesgue-Bochner space and its norm. See \ef{def:normF}.\\
        $\nms{\cdot}_0$ & The $0$-norm of a multi-index or vector, equal to its number of nonzero entries. \\
        $\nms{\cdot}_{\mathsf{disc}}$ & The discrete seminorm given by $\|g\|_{\mathsf{disc}}^2=m^{-1}\sum_{i=1}^m\|g(\bm{y}_i)\|_{\cV}^2$.
\end{longtable}

\label{tab:notation}
\end{table}

\bibliographystyle{abbrv}
\small
\bibliography{optimalb_bib}

\end{document}